\newtheorem{theorem}{Theorem}[section]
\newtheorem{corollary}[theorem]{Corollary}
\newtheorem{lemma}[theorem]{Lemma}
\newtheorem{proposition}[theorem]{Proposition}
\newtheorem{remark}[theorem]{Remark}
\newtheorem{conjecture}[theorem]{Conjecture}
\newcommand{\Q}{\mathbb{Q}}
\newcommand{\Z}{\mathbb{Z}}
\newcommand{\N}{\mathbb{N}}
\def\fa{\mathfrak{a}}
\def\cF{\mathcal{F}}
\def\cO{\mathcal{O}}
\newcommand{\cubic}[2]{\left( \frac{#1}{#2} \right)_3}
\newcommand{\psum}{\sideset{}{^*}\sum}
\title{Hecke $L$-functions Away From The Central Line} 
\author{Mohammad H. Hamdar}
\address{Department of Mathematics and Statistics, Concordia University, 1455 de Maisonneuve West, Montreal, Quebec, Canada H3G 1M8}
\email{mohammadhussein.hamdar@mail.concordia.ca}
\subjclass[2020]{11F30, 11F66, 11L05, 11M06, 11N75, 11R04, 11R16, 11R42}
\keywords{Hecke $L$-functions, Cubic Gauss sums, Metaplectic theta functions, Cubic large sieve, Random Matrix Theory}
\begin{document}
\maketitle
\begin{abstract}
 We compute the first moment of cubic Hecke $L$-functions over $\mathbb{Q}(\sqrt{-3})$ evaluated at any $s$ inside the critical strip. The first moment for $s<\frac{1}{2}$ is particularly interesting, and we show there is a phase transition at $s=\frac{1}{3}$. This extends the analogue result of David-Meisner \cite{DavidMeisner} for the first moment over function fields. As in their work, the computation of the moment at $s=\frac{1}{3}$ relies on a cancellation between two terms which are a priori not related: a main term of the principal sum which comes from cubes, and the contribution from infinitely many residues of Dirichlet series of cubic Gauss sums to the dual sum. The cancellation also improves the error term and exhibits a secondary term for all $s$. In particular, at $s=\frac{1}{2}$, we prove the existence of a secondary term of size $Q^{5/6}$, where the size of the family is $Q$.

  We conjecture that a similar behaviour would hold for higher order Hecke $L$-functions attached to $\ell^{th}$ order residue symbols, refining a function field conjecture of David and Meisner.

  The proof follows the steps of writing $L(s,\chi)$ as two finite sums with the approximate functional equation. Two main ingredients are then exploited: the bound on the second moment of $L(1/2+it,\chi)$ that follows from Heath-Brown's cubic large sieve, and the deep work of Kubota and Patterson which connects the Dirichlet series of cubic Gauss sums to metaplectic forms, and gives a formula for its residues in terms of the Fourier coefficients of metaplectic theta functions. This is only known for cubic Gauss sums, and not for general Gauss sums of order $\ell\geq 4$.
\end{abstract}

\section{Introduction and Main Results}

 Values of $L$-functions at the central critical point $s=\frac12$ are of fundamental importance in number theory. General conjectures for their moments in families are due to \cite{KS1,KS2, CFKRS}, but for any given family of $L$-functions, only the first few moments are known. In this paper, we are interested in the first moment, but in general for any $s \in [0,1]$.

Let $\cF_\ell$ be a family of primitive characters $\chi$ of fixed order $\ell$ over a number field $K$, and denote by $c_\chi$ the conductor of $\chi$. We say that $\cF_\ell$ has size $Q:=Q(X)$ if the quantity 
\begin{align*}
N_\ell (X) =\# \{ \chi \in \cF_\ell \;:\; N_{K/\mathbb{Q}}(c_\chi) \leq X \} 
\end{align*}
satisfies $N_\ell(X) \sim Q(X)$ as $X \rightarrow \infty$. Keating and Snaith \cite{KS1,KS2} conjectured that  if $\cF_\ell$ has size $Q$ then
\begin{equation}\label{KeatingSnaith}
\sum_{\substack{\chi \in \cF_{\ell}\\N_{K/\Q}(c_\chi)\leq X}} L(\tfrac12, \chi) \sim \begin{cases}  
g_2a_2Q\log Q & \ell = 2 \\
g_{\ell}a_{\ell}Q &\ell \geq 3,
\end{cases}
\end{equation}
where $g_{\ell}$ is the random matrix theory factor which depends only on the symmetry type of the family, and $a_{\ell}$ is the arithmetic factor which depends on the arithmetic of the family. 

 The quadratic case $\ell=2$ of \eqref{KeatingSnaith} was first proved by Jutila \cite{Jut} for characters over $\mathbb{Q}$, and by Andrade and Keating \cite{AndKeat} for characters over function fields. The cubic case $\ell=3$ was obtained by Luo \cite{Luo2004} for Hecke $L$-functions over $\mathbb{Q}(\sqrt{-3})$ (the Kummer case) where he considered a thin family of characters, and by Baier and Young \cite{BY} for Dirichlet $L$-functions over $\mathbb{Q}$ (the non-Kummer case). The analogues of the latter two results over function fields were recently proved by David, Florea and Lalin  \cite{DFL22}, where they now work with the full family of characters. For the quartic case, Gao and Zhao proved this conjecture for Hecke $L$-functions over $\mathbb{Q}(i)$ \cite{GaoZhao20} and conditionally under the Lindel\"{o}f hypothesis for Dirichlet $L$-functions over $\mathbb{Q}$ \cite{GaoZhao21}.

 From a random matrix theory point of view, the behavior in \eqref{KeatingSnaith} can be explained by the fact that quadratic characters have symplectic symmetries, while characters of order $\ell \geq 3$ have unitary symmetries. Therefore, the moments at $s=\frac12$ distinguish between quadratic characters and characters of order $\ell \geq 3$. In an attempt to distinguish characters of order $\ell\geq 3$ between themselves, Meisner \cite{Meisner} studied certain statistics over function fields $\mathbb{F}_q[t]$ which are different for every $\ell$. His results hold at the $q$-limit, but in a subsequent paper, David and Meisner \cite{DavidMeisner} formulated a conjecture for the average of the special value $L(\frac{1}{\ell},\chi)$ when $\chi$ varies over a family of characters of order $\ell$ over $\mathbb{F}_q[t]$ for fixed $q$. In the same paper, they prove the $\ell=3$ case of their conjecture over $\mathbb{F}_q[t]$ with fixed $q\equiv 1\pmod 3$.
 
 In this paper, we first state the analogue of the David-Meisner conjecture over number fields, and then prove the cubic case $\ell=3$. To state the conjecture, take a number field $K$ that contains the $\ell$-th roots of unity, and let $\cO_K$ be its ring of integers. For an ideal $\fa \in \cO_K$, one can define the $\ell$-th residue symbol $\chi_\fa$ (explicitly given in \cite{FF} for $\ell=2$ over function fields, but it is similar for any $\ell$ and over number fields, see \cite{FHL} and \cite{BGL}), which is a Hecke character of order $\ell$ defined over $\cO_K$. The Hecke $L$-function associated to $\chi_\fa$ is then defined by
\[L(s,\chi_\fa):=\sum_{0\neq \mathfrak{m}\subset\cO_K}\frac{\chi_\fa(\mathfrak{m})}{N(\mathfrak{m})^s} \ \ \text{for}\ \ \Re(s)>1, \]
where the sum is over nonzero integral ideals of $\cO_K$ and $N(\mathfrak{m})$ is the field norm of $\mathfrak{m}$. Let $\cF_\ell$ be a family of such characters of size $Q=Q(X)\geq 2$, and for each character $\chi$, let $c_\chi$ be its conductor. The conjecture of David-Meisner would then translate here to 
 \begin{conjecture}\label{conjecture} (David-Meisner) Let $w:(0,\infty)\to \mathbb{R}$ be a smooth function with compact support in $(0,1)$. For $\ell\geq 3$ and $s\in (0,1)$, we have that
\begin{align*}
\sum_{\chi \in \cF_\ell}  L(s, \chi) w \left( \frac{N (c_\chi)}{X} \right) \sim \begin{cases} C_{\ell,s, w} Q & \text{if}\ \ \frac{1}{\ell} < s<1 \\
C_{\ell,s, w} Q \log{Q} & \text{if}\ \ s = \frac{1}{\ell} \end{cases}
\end{align*}
where $C_{\ell,s, w}$  is a constant that depends on $\ell$, $s$ and the function $w$.
\end{conjecture}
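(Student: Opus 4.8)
Conjecture~\ref{conjecture} is open for every $\ell\geq3$; the plan is to establish its cubic instance, $\ell=3$ and $K=\Q(\sqrt{-3})$, and in fact an asymptotic valid for all $s\in(0,1)$ with a power-saving error term and an explicit secondary main term. Realize $\cF_3$ as the cubic residue symbols $\chi_c=\cubic{\cdot}{c}$ attached to primary squarefree $c\in\cO_K$ with $(c,3)=1$, so $N(c_\chi)=N(c)$ and $N_3(X)\sim c_0\,X$. The first move is the approximate functional equation: since $\Lambda(s,\chi_c)=\epsilon_c\,\Lambda(1-s,\overline{\chi_c})$ with root number $\epsilon_c=g(c)/\sqrt{N(c)}$ a normalized cubic Gauss sum,
\begin{align*}
L(s,\chi_c)&=\sum_{0\neq\fm\subset\cO_K}\frac{\chi_c(\fm)}{N(\fm)^s}\,V_s\!\left(\frac{N(\fm)}{\sqrt{N(c)}}\right)\\
&\qquad+\gamma_s\,\epsilon_c\,N(c)^{\frac12-s}\sum_{0\neq\fm\subset\cO_K}\frac{\overline{\chi_c}(\fm)}{N(\fm)^{1-s}}\,\widetilde V_s\!\left(\frac{N(\fm)}{\sqrt{N(c)}}\right),
\end{align*}
where $V_s,\widetilde V_s$ are smooth weights of essential length $N(\fm)\ll\sqrt{N(c)}\asymp Q^{1/2}$ with rapidly decaying Mellin transforms and $\gamma_s$ is an explicit archimedean factor. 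Inserting this and summing against $w(N(c)/Q)$ splits the moment into a principal sum $S_{\mathrm{pr}}$ and a dual sum $S_{\mathrm{du}}$.

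For $S_{\mathrm{pr}}$ I would interchange summation and, for each $\fm$, evaluate $\sum_c w(N(c)/Q)\chi_c(\fm)$. Eisenstein's cubic reciprocity over $\Q(\sqrt{-3})$ --- which, unlike the non-Kummer setting, carries no reciprocity correction factor --- flips $\chi_c(\fm)=\cubic{\fm}{c}$ into $\cubic{c}{\fm}$ up to explicit unit factors, turning the $c$-sum into a character sum modulo the radical of $\fm$ weighted smoothly in $N(c)/Q$. When $\fm$ is a perfect cube this character is trivial on that progression, producing a main term proportional to
\[
Q\sum_{\substack{0\neq\fm\subset\cO_K\\ \fm\ \text{a cube}}}\frac{1}{N(\fm)^s}\,V_s\!\left(\frac{N(\fm)}{Q^{1/2}}\right)\;\asymp\;Q\sum_{N(\fn)\ll Q^{1/6}}\frac{1}{N(\fn)^{3s}},
\]
which converges (size $\asymp Q$) for $s>\tfrac13$, is $\asymp Q\log Q$ at $s=\tfrac13$, and for $s<\tfrac13$ is governed by its truncation --- the source of the phase transition. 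For non-cube $\fm$ the character $\cubic{c}{\fm}$ is a nontrivial cubic character and the $c$-sum oscillates; the ranges where the elementary bound is too weak I would treat by Mellin inversion back to vertical integrals of $L(s+it,\chi_c)$, combined with the mean-square estimate $\sum_{\chi\in\cF_3}|L(\tfrac12+it,\chi)|^2\ll\bigl(Q(1+|t|)\bigr)^{1+\varepsilon}$ that follows from Heath-Brown's cubic large sieve.

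The delicate half is $S_{\mathrm{du}}$. Substituting $\epsilon_c=g(c)/\sqrt{N(c)}$ and interchanging summation, the inner $c$-sum becomes, for each $\fm$, a smoothly weighted sum of $g(c)\,\overline{\chi_c}(\fm)\,N(c)^{-s}$, whose Mellin transform against $Q^z$ is the twisted Dirichlet series of cubic Gauss sums $\psi_\fm(s+z)=\sum_c g(c)\,\overline{\chi_c}(\fm)\,N(c)^{-(s+z)}$ studied by Kubota and Patterson. I would invoke their theory: $\psi_\fm$ continues meromorphically to $\C$, its pole in the relevant region --- coming from the residue of a metaplectic Eisenstein series on the cubic cover of $\mathrm{GL}_2$ --- lies at $s+z=\tfrac43$, and Patterson's formula gives its residue through the Fourier coefficients $\tau(\cdot)$ of the Kubota--Patterson cubic theta function (themselves essentially cubic Gauss sums of cube-free parts). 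Shifting $\Re(z)$ leftward past $z=\tfrac43-s$ collects, from infinitely many $\fm$, residue contributions which sum to a term of the same order of magnitude as the cube term in $S_{\mathrm{pr}}$ and whose leading constant I would compute. These two a priori unrelated main terms --- cubes in $S_{\mathrm{pr}}$, theta-coefficient residues in $S_{\mathrm{du}}$ --- combine, with a partial cancellation that the $s=\tfrac13$ case in particular forces, to give exactly the main term predicted by Conjecture~\ref{conjecture} ($C_{3,s,w}Q$, resp.\ $C_{3,1/3,w}Q\log Q$); carrying both expansions one step further also improves the error term and exposes a secondary main term, of size $Q^{5/6}$ at $s=\tfrac12$. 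The integral left on the shifted contour I would bound using Patterson's growth estimates for $\psi_\fm$ together with the cubic large sieve once more.

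The main obstacle I anticipate is making $S_{\mathrm{du}}$ precise enough to see this cancellation and the secondary term, not just the order of magnitude: this needs (i) an exact identification --- all arithmetic constants included, uniform in the twist $\fm$ and at non-primary and ramified moduli --- of $\operatorname{Res}_{w=4/3}\psi_\fm(w)$ in terms of the coefficients $\tau$, along with the analytic properties of the resulting Dirichlet series $\sum_\fm\tau(\fm)N(\fm)^{-(1-s)}$; and (ii) error bounds of strength $O(Q^{1-\delta})$, uniform in all parameters, which press against the known loss of the cubic large sieve relative to a classical large-sieve inequality. This is also exactly where $\ell=3$ is used: for $\ell\geq4$ no analogue of the Kubota--Patterson theory --- in particular no formula for the residues of the $\ell$-th order Gauss sum Dirichlet series --- is available, so only the conjectural statement can be made.
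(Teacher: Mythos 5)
Your proposal follows essentially the same route as the paper: the approximate-functional-equation split into principal and dual sums, the cubic-reciprocity flip and the cube diagonal for the principal sum with Heath–Brown's cubic large sieve bounding its error via a second-moment estimate, and the Kubota–Patterson theory for the dual sum — in particular the residue of $\psi(\cdot,s)$ at $s=4/3$ expressed through the theta coefficients $\tau$, whose accumulation over $\fm$ produces a term cancelling against the cube term so that the $s=1/3$ logarithm and the $Q^{5/6}$ secondary term at $s=1/2$ emerge. The only points you leave implicit that the paper makes precise are the unbalanced approximate functional equation (truncations $A=Q^{3/5}$, $B=Q^{2/5}$ rather than the balanced $Q^{1/2}$, needed for the sharper $O(Q^{(11-6s)/10+\epsilon})$ error, though a balanced version still suffices for the conjectured main term), the requirement $c\equiv 1\pmod 9$ rather than merely primary so that $\chi_c$ is a genuine Hecke character (detected in the $c$-sum via orthogonality over mod-$9$ characters), and the explicit coprimality-removal identity (the paper's Theorem~\ref{Removing coprimality}) needed to pass from the restricted twisted Gauss-sum series to the unrestricted Kubota series $\psi$.
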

 

We prove this for the case $K=\mathbb{Q}(\omega)$ where $\omega=e^{2\pi i/3}$, and for the family 
$$\cF_3= \left\{ \chi_q:=\cubic{\cdot}{q} \;:\; q \in \Z[\omega], \ q\equiv 1\pmod{9} \; \text{and} \; \mu_{\omega}^2(q) = 1 \right\}$$
 of all primitive cubic characters of squarefree (SF) conductor $q\in \mathbb{Z}(\omega)$, where $q\equiv 1\pmod{9}$ (see section 2.1 for the definition of the cubic residue symbol and for more details on the family, and note that $\cF_3$ has size $Q=X$ as $N_{\cF_3}(X)\sim X$). More precisely, we have the following result.
\begin{theorem}\label{Cubic}
Let $\epsilon>0$ and $w:(0,\infty)\to \mathbb{R}$ be a smooth function with compact support in $(0,1),$ with $\widetilde{w}$ its Mellin transform defined in \eqref{Mellin w}.

 For $0\leq s\leq 1$ with $s\neq \frac{1}{3}$, we have

\[\sum_{\chi_q \in \cF_3}L(s,\chi_q)w\left(\frac{N(q)}{Q}\right)=C_s\widetilde{w}(1)Q+E'_s\widetilde{w}\left(4/3-s\right)Q^{4/3-s}+O\left(Q^{\frac{11-6s}{10}+\epsilon}\right)\]
for explicit constants $C_s$ and $E'_s$ given in \eqref{Cubic C_z} and \eqref{E'_z} respectively.

 While at $s=1/3$, we have

\[\sum_{\chi_q \in \cF_3}L(1/3,\chi_q)w\left(\frac{N(q)}{Q}\right)=C_{1/3}\widetilde{w}(1)Q\log Q+D_{1/3}Q+O\left(Q^{9/10+\epsilon}\right)\]
for explicit constants $C_{1/3}$ and $D_{1/3}$ given in \eqref{Cubic C_1/3} and \eqref{Cubic D_1/3} respectively.
\end{theorem}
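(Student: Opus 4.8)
The plan is to start from the approximate functional equation for $L(s,\chi_q)$, which expresses each $L$-value as a sum of two finite Dirichlet polynomials, a ``principal'' sum of length roughly $N(q)^{s}$ (weighted by $\chi_q(\fm)/N(\fm)^s$) and a ``dual'' sum of length roughly $N(q)^{1-s}$ (arising from the functional equation, involving the root number and a sum with $\chi_q(\fm)/N(\fm)^{1-s}$), each cut off by a smooth weight whose Mellin transform provides holomorphic control. Summing over $\chi_q\in\cF_3$ against $w(N(q)/Q)$ and interchanging the order of summation, I would then need to understand, for each fixed integral ideal $\fm$, the character sum $\sum_{q\equiv 1(9),\,\mu_\omega^2(q)=1}\cubic{\fm}{q}\,w(N(q)/Q)$ (and its twisted analogue in the dual sum, which involves the cubic Gauss sum $g(q)$). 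When $\fm$ is a perfect cube the character $\cubic{\fm}{\cdot}$ is essentially trivial, so these terms contribute a main term: in the principal sum this produces $C_s\widetilde w(1)Q$ by evaluating $\sum_{\fm=\text{cube}}N(\fm)^{-s}$ times the count of the family, with the relevant Euler product giving the constant $C_s$.

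The heart of the argument is the dual sum. Here one is led to evaluate $\sum_{q}\big(\tfrac{\fm}{q}\big)_3\, \omega\text{-Gauss-sum factors}$, and the key input is the Kubota–Patterson theory: the generating Dirichlet series $\sum_q g(\fm,q)N(q)^{-w}$ of cubic Gauss sums is, up to normalization, the $\fm$-th Fourier coefficient of a metaplectic Eisenstein/theta object, meromorphic in $w$ with a pole at $w=4/3$ whose residue is given explicitly in terms of the Fourier coefficients of the Patterson–Kubota cubic theta function. Shifting contours in the Mellin–Perron representation of the $q$-sum, the pole at $w=4/3$ contributes, after summing the dual $\fm$-sum, a term of the shape $E'_s\widetilde w(4/3-s)Q^{4/3-s}$. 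The crucial phenomenon — exactly as in David–Meisner — is that when $s=\tfrac13$ the exponent $4/3-s=1$ coincides with the size of the family, so this residual term is of the same order $Q$ as the cube-main-term; these two a priori unrelated contributions must be combined, and a cancellation (together with a logarithmic term emerging from the now double pole / boundary case) yields the $Q\log Q$ main term $C_{1/3}\widetilde w(1)Q\log Q$ plus the secondary constant $D_{1/3}Q$. For $s\neq\tfrac13$ the same mechanism still refines the error term and leaves $E'_s\widetilde w(4/3-s)Q^{4/3-s}$ as a genuine secondary term.

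The error terms are controlled by two devices. For the ``generic'' ideals $\fm$ (non-cubes, and the off-diagonal part of the dual sum beyond the residue), the remaining character sums over $q$ are bounded by Heath-Brown's cubic large sieve, which yields the bound on $\sum_{\chi}|L(\tfrac12+it,\chi)|^2$ and more generally power-saving cancellation in $\sum_q\big(\tfrac{\fm}{q}\big)_3 w(N(q)/Q)$ on average over $\fm$; this is what produces the error exponent $(11-6s)/10$ (respectively $9/10$ at $s=\tfrac13$), after optimizing the cutoff lengths in the approximate functional equation against the large-sieve bound. For the tail of the metaplectic series past the contour shift one uses analytic continuation and polynomial growth bounds for metaplectic forms on vertical lines.

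The step I expect to be the main obstacle is making the cancellation at $s=\tfrac13$ completely precise: one must match the arithmetic constant coming from the cube-sum Euler product against the explicit residue of the Kubota–Patterson series (which is itself packaged in terms of $\psi$-function Fourier coefficients and a somewhat delicate normalization of the metaplectic theta function over $\Q(\omega)$), track how the simple pole degenerates into the $\log Q$ term, and verify that the leftover finite parts assemble into the stated constant $D_{1/3}$. Keeping all the local factors, the $q\equiv 1\pmod 9$ and squarefree conditions, and the Mellin weights $\widetilde w(1)$ versus $\widetilde w(4/3-s)$ consistent through this identification is where the real work lies; the large-sieve error analysis, by contrast, is expected to be technical but routine.
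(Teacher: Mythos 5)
Your proposal follows essentially the same route as the paper: split via the (unbalanced) approximate functional equation with cutoff parameters $A,B$ satisfying $AB=Q$, extract the main term from cubes in the principal sum bounded via Heath-Brown's cubic large sieve (through a second-moment estimate), convert the dual sum to the Kubota Dirichlet series $\psi(\fm,s)$ and pick up the residue at $s=4/3$ via Patterson's formula, and identify the constants so that the unbalanced $A,B$-dependent terms either cancel ($D_z'=-D_z$ for $z\neq\tfrac13$) or add ($C_{1/3}'=C_{1/3}$ so that $\log A+\log B=\log Q$ produces the $Q\log Q$ term). You also correctly identify the real work — matching the cube Euler product, the Patterson residue normalization, and the mod-$9$/squarefree bookkeeping so that the constant identities in Theorem~\ref{Constants} come out exactly — and the optimization $A=Q^{3/5}$, $B=Q^{2/5}$ yielding the stated error exponents.
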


We note that Theorem \ref{Cubic} remains true if we consider any $s$ in the critical strip and not just real $s$. Thus, a phase transition at $\Re (s)=\frac13$ occurs on any horizontal line inside the critical strip. When $0\leq s<1/3$, the main and secondary terms in the asymptotic switch. We call this the \emph{``switching behaviour"} which we also observe in the quadratic case in Theorem \ref{Quadratic} below. Furthermore, for $s\in \left[0,\frac{1}{6}\right]\cup\left[\frac{7}{12},1\right]$, the error term dominates the respective secondary term.

\begin{remark}
At $s=\frac{1}{2}$, Theorem \ref{Cubic} gives
\[\sum_{\chi_q \in \cF_3}L(1/2,\chi_q)w\left(\frac{N(q)}{Q}\right)=C_{1/2}\widetilde{w}(1)Q+E'_{1/2}\widetilde{w}\left(5/6\right)Q^{5/6}+O\left(Q^{4/5+\epsilon}\right), \]
improving the result of Luo \cite{Luo2004}, where we obtain the secondary term $Q^{5/6}$ and improve his error term of size $Q^{21/22+\epsilon}$.
\end{remark}

A secondary term of size $Q^{5/6}$ is also conjectured for the second moment over cubic characters by Diaconu \cite[Conjecture 4.5]{Diaconu}, but it is still an open question to show the existence of that term. In \cite{chantal2024}, the authors give an asymptotic for the second moment over the same family $\mathcal{F}_3$, with an error term of size $Q^{5/6+\epsilon}$, just missing the secondary term.


 Theorem \ref{Cubic} also gives a better asymptotic than its function field analogue in \cite{DavidMeisner}, as we were able to capture secondary terms and get better error terms. We were additionally able to see the behaviour below the $1/3$ point, as well as at $s=1$. 

To our knowledge, there are very few papers investigating moments for $\Re (s)<1/2$. In \cite{FHL}, Friedberg, Hoffstein, and Lieman compute the first moment of $L$-functions $L^*(s, \chi)$, where $\chi$ runs over families of characters of order $\ell\geq 3$, and $1/\ell < \Re(s) < 1$. Their $L$-function $L^*(s, \chi)$  is a version of $L(s, \chi)$ with some Euler factors modified (depending on the conductor of $\chi$), and also include some arithmetic automorphic factors (see \cite[Eqn 1.3]{FHL}). Those requirements are forced from the properties of the double Dirichlet series that they introduce in order to study their moments, with techniques very different from ours. For $1/\ell < \Re(s) < 1$, they get an asymptotic for the first moment (running over all characters in thin families, and not only the primitive ones) with principal and secondary terms as in Theorem \ref{Cubic}, but with an additional tertiary term and a very small error term of size $Q^{\epsilon}$ (\cite[Theorem 1.3]{FHL}). They do not address the transition at $\Re(s)=1/\ell$ but they remark after Theorem 1.3 that for the case $s=\frac{1}{\ell}$, ``two main terms of order $Q$ must be carefully combined".

Our work (and that of \cite{DavidMeisner}) depends strongly on a cancellation between two terms from the moment computation, as seen in Theorem \ref{Asymptotics}: at $s=1/3$, $\mathcal{M}_1(1/3)$ and $\mathcal{M}_2(1/3)$ combine to give the main term (since $B=Q/A$), and at $s\neq 1/3$, the cancellation between $\mathcal{M}_1(s)$ and $\mathcal{M}_2(s)$ is essential to see the secondary term. A similar cancellation was also observed in \cite{DFL22} where the authors compute the first moment for the \textit{full families} of cubic characters over function fields at $s=1/2$. But unlike the case of the thin family $\mathcal{F}_3$ of this paper, the error terms for the full families are not strong enough to use the cancellation and exhibit the secondary term.

The strategy of the proof is as follows. We use the approximate functional equation to write our moment as a principal sum and a dual sum. We then treat each sum separately with different methods. The main term of the principal sum comes from the cubes; and to bound the error term, we use a bound on the second moment of the Hecke $L$-function in Theorem \ref{Bound on 2nd moment} which follows from Heath-Brown's cubic large sieve \cite{HB2000}. The dual sum involves cubic Gauss sums (which come from the sign of the functional equation) and is related to Kubota's metaplectic Dirichlet series defined in Section 4. We use deep results of Kubota, Patterson, and Heath-Brown$-$Patterson on metaplectic forms to establish an asymptotic for the dual sum. The main term of the dual sum comes from summing all the residues of this series. At the end, after exact computations of the explicit constants that arise in the asymptotic, we observe a delicate cancellation of certain terms when combining the two sums which is crucial to proving Theorem \ref{Cubic}.

One could ask if our proof could generalize to higher order characters to obtain a proof of Conjecture \ref{conjecture} for any $\ell$. Working with higher order characters becomes very intricate as the number field may have class number $>1$ and infinitely many units, which makes Artin's reciprocity difficult to track. In fact, the local obstructions coming from the character might restrict us to work with a version of $L(s,\chi)$ where some Euler factors are removed. It is worth noting that for bounding the error term in the principal sum, there is an analogue of the cubic large sieve for the higher order characters which was established by Blomer, Goldmakher, and Louvel \cite{BGL}. But the main obstacle is the dual sum computation which relies on an exact computation of the residues of the Dirichlet series of Gauss sums which are still shrouded in mystery for the $\ell$-fold cover of the metaplectic group when $\ell\geq 4$, and only very partial results are known. For the quartic case $\ell=4$, we mention the work of Suzuki \cite{Suzuki1} and later the conjecture of Eckhardt-Patterson \cite{EckhardtPatterson} that Suzuki then proved part of in \cite{Suzuki2}. These results have been recently leveraged to get bounds on quartic Gauss sums over primes, see \cite{daviddunnhamieh}. For general $\ell$, we refer the reader to the paper of Patterson \cite{PatGeneral}.

As a comparison with the cubic case, for $\ell=2$ (quadratic characters), we compute the first moment for any $s\in [0,1]$ in Theorem \ref{Quadratic} below, where a transition now happens at $s=1/2$. As far as we know, this was never written down anywhere in the literature. Let $\chi_q(.)=\left(\frac{.}{q}\right)_2$ be the quadratic character given by the Kronecker symbol and consider the family
\[\cF_2=\{\chi_q\;:\;q\in\mathbb{Z}_{>0},\; q\equiv 1\pmod 4,\; \text{and}\; \mu^2(q)=1 \} \]
which has size $Q=X$. We have the following result for the associated family of quadratic $L$-functions.

 \begin{theorem}\label{Quadratic} Let $\epsilon>0$ and $w:(0,\infty)\to \mathbb{R}$ be a smooth function with compact support in $(0,1)$.
 For any $0\leq s\leq 1$ with $s\neq \frac{1}{2}$, we have 

\[\sum_{\chi_q\in\cF_2}L(s,\chi_q)w\left(\frac{q}{Q}\right)=C_s\widetilde{w}(1)Q+C'_s\widetilde{w}(3/2-s)Q^{3/2-s}+O\left(Q^{1-\frac{s}{2}+\epsilon}\right),\]
for explicit constants $C_s$ and $C_s'$ given in \eqref{Quad C_z} and \eqref{Quad C_z'} respectively.

 While at $s=1/2$, we have

\[\sum_{\chi_q\in\cF_2}L(1/2,\chi_q)w\left(\frac{q}{Q}\right)=C\widetilde{w}(1)Q\log Q+DQ+O\left(Q^{3/4+\epsilon}\right)\]
for explicit constants $C$ and $D$ given in \eqref{Quad C} and \eqref{Quad D} respectively.
 \end{theorem}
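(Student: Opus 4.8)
The proof of Theorem~\ref{Quadratic} follows the same architecture as that of Theorem~\ref{Cubic}, but is substantially lighter: for $q\equiv 1\pmod 4$ the character $\chi_q$ is even, the sign of its functional equation is $+1$, and the quadratic Gauss sums appearing on the dual side are given explicitly by Gauss's classical evaluation, so no metaplectic input is needed and the ``dual'' Dirichlet series is essentially a shifted zeta-function. \textit{Step 1 (approximate functional equation).} For $\chi_q\in\cF_2$ set $\Lambda(s,\chi_q):=(q/\pi)^{s/2}\Gamma(s/2)L(s,\chi_q)$, so that $\Lambda(s,\chi_q)=\Lambda(1-s,\chi_q)$. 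Introducing a balancing parameter one writes
\[
L(s,\chi_q)=\sum_{n\ge 1}\frac{\chi_q(n)}{n^{s}}\,V_{s}\!\Bigl(\frac{n}{A}\Bigr)+X_s(q)\sum_{n\ge 1}\frac{\chi_q(n)}{n^{1-s}}\,V_{1-s}\!\Bigl(\frac{n}{B}\Bigr),
\]
where $V_s$ is the standard smooth cutoff, $AB\asymp q$, and $X_s(q)=(q/\pi)^{1/2-s}\Gamma\!\bigl(\tfrac{1-s}{2}\bigr)/\Gamma\!\bigl(\tfrac{s}{2}\bigr)$ up to the customary adjustment when $A\ne\sqrt q$. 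Summing against $w(q/Q)$ decomposes the first moment as $\cM_1(s)+\cM_2(s)$, the averaged principal and dual sums, with $A,B$ global parameters satisfying $AB\asymp Q$ to be optimized at the very end.

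\textit{Step 2 (the principal sum $\cM_1$).} Open the Dirichlet series and write $n=n_1m^{2}$ with $n_1$ squarefree. The terms $n_1=1$ form the diagonal: detecting $\mu^2(q)=1$ by $\sum_{d^{2}\mid q}\mu(d)$ and evaluating the smooth count of $q\le Q$ with $q\equiv 1\pmod 4$ and $(q,m)=1$, one gets a contribution $\widetilde w(1)\,Q\sum_{(m,2)=1}\alpha_m m^{-2s}h(m^{2}/A)+\cdots$ for an explicit multiplicative $\alpha_m$ and a smooth $h$ with $h(0)=1$. For $s>\tfrac12$ the $m$-sum converges and yields the main term $C_s\widetilde w(1)Q$; at $s=\tfrac12$ it is $\sim\tfrac14 C\log A$, producing $Q\log Q$; for $s<\tfrac12$ it grows like a power of $A$, whose leading part is to be cancelled against $\cM_2$. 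The off-diagonal $n_1>1$ is an error term: there $q\mapsto\chi_q(n_1)$ is nonprincipal, and after Poisson summation in $q$ against the smooth weight (handling the $d$-sum from the squarefree sieve) together with the quadratic large sieve — equivalently the bound $\sum_{\chi_q\in\cF_2}w(q/Q)|L(\tfrac12+it,\chi_q)|^{2}\ll(Q(1+|t|))^{1+\epsilon}$, the quadratic analogue of Theorem~\ref{Bound on 2nd moment} — this is controlled by a quantity that is small provided $A$ is not too large.

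\textit{Step 3 (the dual sum $\cM_2$).} Again open the series, $n=n_1m^{2}$. For $n_1=1$ one is left with $\sum_{q}w(q/Q)X_s(q)(\cdots)$; writing $w$ through its Mellin transform and using $X_s(q)\asymp(q/\pi)^{1/2-s}$, the relevant Dirichlet series in $q$ has a simple pole which, pulled through the $\widetilde w$-integral, produces a term of size $Q^{3/2-s}$ with leading coefficient proportional to $\widetilde w(3/2-s)$ — this is the secondary term, and the source of the second $\log Q$ when $s=\tfrac12$, where the pole collides with the point of evaluation. For $n_1>1$, quadratic reciprocity turns $\chi_q(n_1)=\bigl(\tfrac{n_1}{q}\bigr)$ into the nonprincipal character $q\mapsto\bigl(\tfrac{q}{n_1}\bigr)$ of conductor dividing $4n_1$ (with no sign, since $q\equiv 1\pmod 4$); Poisson summation in $q$ then saves $\ll n_1^{1/2+\epsilon}$, and summing over $n_1,m$ with the factor $X_s(q)$ gives an error that is small provided $B=Q/A$ is not too large.

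\textit{Step 4 (combination and the phase transition).} Adding $\cM_1(s)$ and $\cM_2(s)$, the functional equation forces the part of the diagonal in $\cM_1$ beyond the natural length to cancel the corresponding part of the diagonal in $\cM_2$; what survives is the main term $C_s\widetilde w(1)Q$ together with the clean secondary term $C'_s\widetilde w(3/2-s)Q^{3/2-s}$, and choosing $A$ to balance the two error estimates of Steps 2 and 3 yields the error $O(Q^{1-s/2+\epsilon})$. At $s=\tfrac12$ the two diagonals each contribute $\tfrac14\log A$; these merge into $C\widetilde w(1)Q\log Q$, the remaining constants assemble into $DQ$, and the optimal $A$ gives $O(Q^{3/4+\epsilon})$, consistent with $1-\tfrac{s}{2}$ at $s=\tfrac12$. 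The constants $C_s,C'_s$ (resp. $C,D$) are read off as Euler products times the relevant values of $\zeta$ and of the $\Gamma$-factor in $X_s(q)$, and are recorded in \eqref{Quad C_z}, \eqref{Quad C_z'} (resp. \eqref{Quad C}, \eqref{Quad D}). The main obstacle is the precise bookkeeping in this last step: identifying exactly which pieces of the two diagonals cancel and computing the surviving constants — in particular checking at $s=\tfrac12$ that the two logarithmic contributions combine with the correct coefficient $C$ and that the residual linear term $DQ$ is genuine — together with the optimization of $A$ so that both error contributions are simultaneously $\ll Q^{1-s/2+\epsilon}$; the remaining ingredients (the approximate functional equation, the squarefree-sieve diagonal count, and the off-diagonal bounds via Poisson summation, quadratic reciprocity, and the quadratic large sieve) are routine.
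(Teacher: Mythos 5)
Your proposal reaches the right conclusion but follows a genuinely different route from the paper's Section~5, which is worth spelling out. \textbf{What the paper does.} It uses the \emph{balanced} approximate functional equation (Proposition~\ref{AFE0}), explicitly noting this as a departure from the cubic case: there is no parameter $A,B$ to tune, and at $z=\tfrac12$ one literally has $\mathcal M_1=\mathcal M_2$. Each of $\mathcal M_1,\mathcal M_2$ is then treated by the same recipe: sieve the squarefree condition, apply quadratic reciprocity $\chi_q(n)=\chi_n(q)$, detect $q\equiv 1\pmod 4$ by orthogonality of characters mod $4$, and analyze the resulting $\sum_\psi L(\cdot,\psi\chi_n)$ analytically. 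The error is bounded by pushing $s$ to the critical line and applying Cauchy--Schwarz plus the second moment estimate of Theorem~\ref{Bound on second moment} (a mean value over the \emph{twisting} variable $n$, not the conductor $q$ as you wrote), giving $O(Q^{1-z/2+\epsilon})$ outright. There is no Poisson summation anywhere in the quadratic section. \textbf{What you do instead.} You propose an \emph{unbalanced} AFE with parameters $A,B$, $AB\asymp Q$, carry the full machine of the cubic case over, handle the off-diagonal via Poisson summation in $q$, and optimize $A$ at the end. This is a valid alternate route, but it is strictly heavier: if you actually optimize the two error contributions you will be forced to $A\asymp B\asymp\sqrt Q$, i.e.\ you land exactly on the balanced AFE the paper starts from. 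Two small inaccuracies to flag: the second-moment bound you quote should be over the twisting modulus $n$, with exponent $(1+|t|)^{1/2+\epsilon}$, not over $q\in\cF_2$ with $(Q(1+|t|))^{1+\epsilon}$; and at $s=\tfrac12$ the statement ``each diagonal contributes $\tfrac14\log A$'' should be ``$\mathcal M_1$ contributes $\log A$ and $\mathcal M_2$ contributes $\log B$ (up to the common constant), and $\log A+\log B=\log Q$'' — these are cosmetic, but the explicit bookkeeping in the paper is exactly what makes the constants $C, D$ in \eqref{Quad C}, \eqref{Quad D} come out as $2C_{1/2}$ and $2C_2$. The conceptual steps in your sketch (reciprocity, diagonal from squares, cancellation of the $Q^{5/4-s/2}$-type terms between the two sides, doubling at $s=\tfrac12$) all match the paper.
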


Note that when $0\leq s<1/2$, the main and secondary terms in the asymptotic switch, again obeying the \emph{switching behaviour} which we observed in the cubic case; and at $s=0,1$, the error term dominates the respective secondary term. At $s=1/2$, we recover the result of Jutila \cite{Jut} with the same error term (the same result was also obtained in \cite{VinTakh}, and the error term was later improved in \cite{GoldfeldHoffstein} and \cite{Young}). It is worth mentioning that, similar to the cubic case, we also observe an explicit cancellation here when combining the principal and dual sums if $s\neq \frac{1}{2}$ (see Theorem \ref{Asymptotics0}).

Finally, Theorems \ref{Cubic} and \ref{Quadratic} and their proofs, make it reasonable to refine the David-Meisner conjecture by including secondary terms (as in \cite{FHL} for $\Re(s)>1/\ell$) and also predicting the \emph{switching behaviour} when $\Re(s)\in \left[0,\frac{1}{\ell}\right)$. Indeed, recalling that $\cF_{\ell}$ is the size $Q=Q(X)$ family of characters of order $\ell$, we conjecture the following.

\begin{conjecture}
For $\ell\geq 4$ and $s\in \mathbb{C}$, we have that
\begin{align*}
\sum_{\chi \in \mathcal{F}_{\ell}} L(s, \chi)w\left( \frac{N(c_{\chi})}{X}\right) = \begin{cases}  
C_{\ell,s,w}Q+D_{\ell,s,w}Q^{1+\frac{1}{\ell}-s}\left(1+o(1)\right) & \ \frac{1}{\ell}<\Re(s)\leq 1 \\
C_{\ell,w}Q\log Q+D_{\ell,w}Q\left(1+o(1)\right) & \ \Re(s)=\frac{1}{\ell}\\
D_{\ell,s,w}Q^{1+\frac{1}{\ell}-s}+C_{\ell,s,w}Q\left(1+o(1)\right) & \ 0\leq \Re(s)<\frac{1}{\ell}.
\end{cases}
\end{align*}
\end{conjecture}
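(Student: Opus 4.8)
The plan is to follow the architecture of the proof of Theorem~\ref{Cubic}, now carried out over a number field $K$ containing the $\ell$-th roots of unity, with $\mathcal{F}_\ell$ the family of primitive $\ell$-th order residue symbols of squarefree conductor. First I would fix the normalization: because $K$ may have class number larger than $1$ and infinitely many units, and because the $\ell$-th power residue symbol only obeys a clean reciprocity law on a congruence subgroup, I expect to be forced (exactly as in \cite{FHL}) to work with a modified $L$-function $L^*(s,\chi)$ in which finitely many Euler factors depending on $c_\chi$, together with suitable automorphic factors, are adjusted; the conjecture should then be read with this $L^*$, and one checks that the modification only perturbs lower-order terms. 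With this in place, apply the approximate functional equation to write
\[
\sum_{\chi\in\mathcal{F}_\ell} L^*(s,\chi)\, w\!\left(\tfrac{N(c_\chi)}{Q}\right) = \mathcal{M}(s) + \mathcal{M}^{\vee}(s) + (\text{error}),
\]
where $\mathcal{M}(s)$ is a principal sum of length $\asymp A$ and $\mathcal{M}^{\vee}(s)$ is a dual sum of length $\asymp B$ with $AB \asymp Q$, the dual sum carrying the sign of the functional equation, i.e.\ an $\ell$-th order Gauss sum $g_\ell(m,q)$.

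Second, I would analyze the principal sum. Swapping the order of summation, the diagonal contribution comes from $\mathfrak{m}$ that is an $\ell$-th power times a unit, and summing over the family produces the main term $C_{\ell,s,w}Q$ (and, when $s=1/\ell$, one of the two terms that will conspire into $Q\log Q$). The off-diagonal is bounded by Cauchy--Schwarz against the second moment $\sum_{\chi\in\mathcal{F}_\ell} |L^*(1/2+it,\chi)|^2$, for which one invokes the $\ell$-th order large sieve inequality of Blomer, Goldmacher, and Louvel \cite{BGL} in place of Heath-Brown's cubic large sieve. This is the routine part of the argument, and it is precisely what makes a thin family tractable.

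Third, and this is where the essential difficulty lies, I would attack the dual sum $\mathcal{M}^{\vee}(s)$. After opening the Gauss sums and summing over the conductors $q$, one is led to a Dirichlet series $\psi_\ell(s,\cdot)$ of $\ell$-th order Gauss sums, the analogue of Kubota's cubic series. The whole method hinges on: (i) the meromorphic continuation of $\psi_\ell$ to $\mathbb{C}$; (ii) the precise location of its poles; and (iii) an explicit formula for the residues in terms of Fourier coefficients of the metaplectic theta function on the $\ell$-fold metaplectic cover. For $\ell=3$ these are the theorems of Kubota, Patterson, and Heath-Brown--Patterson exploited in Theorem~\ref{Cubic}; for $\ell\ge 4$ they are \emph{not known}. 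The best available inputs are Suzuki's work, the Eckhardt--Patterson conjecture for $\ell=4$ (partially proved in \cite{Suzuki2}), and Patterson's general framework \cite{PatGeneral}; so the honest plan is \emph{conditional}: assume a residue formula of the expected shape, derive the asymptotic for $\mathcal{M}^{\vee}(s)$ by shifting contours past the poles, and isolate its leading term as a sum over all residues, of size $Q^{1+1/\ell-s}$.

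Finally, I would carry out the exact bookkeeping of the explicit constants. The key phenomenon to verify is the cancellation: the main term $C_{\ell,s,w}Q$ of the principal sum (coming from $\ell$-th powers) and the residue-sum main term of the dual sum are \emph{a priori unrelated}, yet at $s=1/\ell$, where $B=Q/A$, they combine exactly into $C_{\ell,w}Q\log Q$ plus a constant multiple $D_{\ell,w}Q$; for $s\ne 1/\ell$ the same cancellation removes the spurious $Q$-size contribution of the dual sum and leaves the genuine secondary term $D_{\ell,s,w}Q^{1+1/\ell-s}$, which dominates $Q$ when $0\le\Re(s)<1/\ell$ (the \emph{switching behaviour}) and is dominated by $Q$ when $1/\ell<\Re(s)\le 1$. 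The main obstacle, stated plainly, is step (iii): without an exact residue formula for the $\ell$-fold metaplectic Gauss-sum Dirichlet series one cannot pin down the dual main term finely enough to see either the cancellation at $s=1/\ell$ or the secondary term, which is exactly why the result is only a conjecture for $\ell\ge 4$.
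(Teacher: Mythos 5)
This statement is a conjecture that the paper does not prove; its only ``justification'' is the analogy with the proven cases $\ell=2,3$ together with the introductory discussion of why the method stalls at $\ell\geq 4$. Your outline reproduces that discussion essentially verbatim — same architecture as the cubic proof, the Blomer--Goldmacher--Louvel large sieve handling the principal sum, and the unknown residue formula for the $\ell$-fold metaplectic Gauss-sum Dirichlet series correctly identified as the decisive obstruction — so it matches the paper's approach and is appropriately honest that the argument is conditional.
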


In the proofs (right before Theorem 2.6) we will start writing $L(z,\chi_q)$, and leave $s$ as a variable of integration.

\subsection*{Acknowledgments} I am grateful to Chantal David for introducing this project to me and for her guidance and enthusiasm throughout. I would like to thank Cihan Sabuncu for many fruitful discussions and beneficial suggestions, and Alexander Dunn for his relevant comments. I also thank Andrew Granville, Alexandre de Faveri, Sun-Kai Leung, and Patrick Meisner for enlightening conversations related to this problem.

\section{Background and Preliminaries}

\subsection{Arithmetic in the Eisenstein Ring}

  
Recall that $\omega=-\frac{1}{2}+i\frac{\sqrt{3}}{2}$ and $\mathbb{Q}(\omega)$ is the Eisenstein quadratic field with ring of integers $\mathbb{Z}[\omega]$, class number $1$, discriminant $-3$, and a finite group of units $\{\pm 1,\pm \omega, \pm \omega^2\}=\langle -\omega\rangle$. Denote by $\lambda:=1-\omega$ the unique ramified prime in $\mathbb{Z}[\omega]$. Any non-zero element in $\mathbb{Z}[\omega]$ can be uniquely written as $\zeta \lambda^k m$ where $\zeta\in\langle -\omega\rangle, k\in \mathbb{Z}_{\geq 0},$ and $m\in \mathbb{Z}[\omega]$ with $m\equiv 1\pmod 3$.  For any $n\in \mathbb{Z}[\omega]$, let $\mu(n):=\mu_{\mathbb{Z}[\omega]}(n)$ denote the standard M\"{o}bius function in $\mathbb{Q}(\omega)$, and let 
$$\sigma(n):=\sum_{\substack{d\equiv 1(\mathrm{mod}\ 3)\\d|n}}N(d)$$
denote the sum of primary divisors function.

 For $a,\pi\in \mathbb{Z}[\omega]$ and $\pi$ a prime coprime to $\lambda$, the cubic residue symbol mod $\pi$ is defined by
\[\left(\frac{a}{\pi}\right)_3\equiv a^{\frac{N(\pi)-1}{3}} \pmod \pi\]
taking values in $\{1,\omega,\omega^2 \}$ if $a$ and $\pi$ are coprime, and equals $0$ if $\pi|a$. This symbol is multiplicative in $a$ and can also be extended multiplicatively for all $b\in \mathbb{Z}[\omega]$ by setting
\[\left(\frac{a}{b}\right)_3=\prod_{i}\left(\frac{a}{\pi_i}\right)_3^{e_i} \]
where $b=\prod_i \pi_i^{e_i}$ for primes $\pi_i$. Moreover, the cubic symbol obeys cubic reciprocity: If $a,b\equiv 1\pmod 3$, then
\[\left(\frac{a}{b}\right)_3=\left(\frac{b}{a}\right)_3. \]
We also have supplementary laws of cubic reciprocity concerning units and the ramified prime $\lambda$. Given
\[c\equiv 1+\alpha_2\lambda^2+\alpha_3\lambda^3 \pmod 9\ \ \text{with} \ \ \alpha_2,\alpha_3\in\{-1,0,1 \}, \]
then
\begin{equation}\label{supplemet}
\left(\frac{\omega}{c}\right)_3=\omega^{\alpha_2}\ \ \text{and}\ \ \left(\frac{\lambda}{c}\right)_3=\omega^{-\alpha_3}.  
\end{equation}
 For $q\in \mathbb{Z}[\omega]$ with $q\equiv 1\pmod 3$, the Dirichlet character
 \begin{equation}\label{chi_q}
 \chi_q(m):=\left(\frac{m}{q}\right)_3, \ \ m\in\mathbb{Z}[\omega], 
 \end{equation}
  on $\mathbb{Z}[\omega]/q\mathbb{Z}[\omega]$ is a well defined Hecke character when $q\equiv 1\pmod 9$, since then $\chi_q(\omega)=1$ by \eqref{supplemet}. 
  In this case, $\chi_q$ can now be regarded as a ray class character for the ray class group $h_{(q)}=I_{(q)}/P_{(q)}$, where $I_{(q)}=\{\mathfrak{m}\subset\mathcal{I}: (\mathfrak{m},(q))=1\}$ and $P_{(q)}=\{(m)\subset\mathcal{P}: m\equiv 1\pmod q\}$ with $\mathcal{I}$ and $\mathcal{P}$ denoting the group of fractional ideals in $\mathbb{Q}(\omega)$ and the subgroup of principal ideals respectively.

The character $\chi_q$ is primitive when $q$ is cubefree, i.e. $q=q_1q_2^2$ where $q_1,q_2\in \mathbb{Z}[\omega]$, $q_1, q_1\equiv 1\pmod 3,$ and $\mu^2(q_1q_2)=1$. Each $\chi_q$ in this case has conductor $q_1q_2\mathbb{Z}[\omega]$. Taking further $q$ to be SF (i.e. $q_2=1$), we get that the conductor of $\chi_q$ is now $q\mathbb{Z}[\omega]$, and so we define the family
\begin{equation}\label{family F_3}
\cF_3= \left\{\chi_q(.):=\left(\frac{.}{q}\right)_3 \;:\; q \in \Z[\omega], \ q\equiv 1\pmod{9} \; \text{and} \; \mu^2(q) = 1 \right\}.
\end{equation}

\subsection{Cubic Gauss Sums}

For $z\in \mathbb{C}$, denote by
\[e\left(z\right):=e^{2\pi i\text{Tr}(z)}=e^{2\pi i(z+\bar{z})}, \]
where $\text{Tr}(z)$ is the Trace from $\mathbb{Q}(\omega)$ to $\mathbb{Q}$. If $c\in \mathbb{Z}[\omega]$ and $c\equiv 1\pmod 3$, the cubic Gauss sum is defined by
\begin{align}\label{Gauss sum}
g(c)=\sum_{d(\mathrm{mod}\ c)}\left(\frac{d}{c}\right)_3e\left(\frac{d}{c}\right).
\end{align}
We have an exact formula for the cube of $g(c)$ \cite[pp. 443, 445]{H}:
\[g(c)^3=\mu(c)c^2\bar{c},\]
which implies that $g(c)$ is only supported on squarefree moduli $c$. In that case, the size of the Gauss sum is
\[|g(c)|=|c|=N(c)^{1/2},\]
 and the normalised Gauss sum is thus given by
 \[\tilde{g}(c):=\frac{g(c)}{N(c)^{1/2}}. \]
 For $\nu\in\mathbb{Z}[\omega]$, we also consider the shifted (or generalized) Gauss sum
\[g(\nu,c):=\sum_{d(\mathrm{mod}\ c)}\left(\frac{d}{c}\right)_3e\left(\frac{\nu d}{c}\right), \]
 so that $g(c)=g(1,c)$. The next two lemmas record standard algebraic properties of these sums.
 
\begin{lemma}\label{properties of g}
Let $n,n_1,n_2,m,r\in \mathbb{Z}[\omega]$ with $n,n_1,n_2\equiv 1\pmod 3$. Then,
\\
If $(m,n)=1$, \begin{equation}\label{1}
g(mr,n)=\overline{\left(\frac{m}{n}\right)_3}g(r,n).
\end{equation}
If $(n_1,n_2)=1$, 
\begin{equation}\label{2}
 g(r,n_1n_2)=g(rn_1,n_2)g(r,n_1).
\end{equation}

\end{lemma}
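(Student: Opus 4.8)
The plan is to prove both identities by elementary reindexing of the defining sum \eqref{Gauss sum}, using only three facts: the cubic residue symbol is multiplicative in its numerator, it takes values in $\{1,\omega,\omega^2\}$ (so its inverse equals its complex conjugate), and it satisfies cubic reciprocity for arguments $\equiv 1\pmod 3$.

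For \eqref{1}, since $(m,n)=1$ the class of $m$ is a unit in $\Z[\omega]/n\Z[\omega]$, so the map $d\mapsto m^{-1}d$ permutes a residue system mod $n$. First I would make this substitution in $g(mr,n)=\sum_{d(\mathrm{mod}\ n)}\left(\frac{d}{n}\right)_3 e\!\left(\frac{mrd}{n}\right)$: the additive character becomes $e(rd/n)$, and by multiplicativity the symbol becomes $\left(\frac{m^{-1}}{n}\right)_3\left(\frac{d}{n}\right)_3$. Since $\left(\frac{m^{-1}}{n}\right)_3=\left(\frac{m}{n}\right)_3^{-1}=\overline{\left(\frac{m}{n}\right)_3}$ (a cube root of unity), pulling this constant out of the sum leaves exactly $\overline{\left(\frac{m}{n}\right)_3}\,g(r,n)$.

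For \eqref{2}, the key tool is the Chinese Remainder Theorem in $\Z[\omega]$: when $(n_1,n_2)=1$, the map $(a,b)\mapsto n_2a+n_1b$, with $a$ ranging over residues mod $n_1$ and $b$ over residues mod $n_2$, is a bijection onto residues mod $n_1n_2$ — injectivity is immediate after reducing mod $n_1$ and mod $n_2$, and both sides have cardinality $N(n_1)N(n_2)=N(n_1n_2)$. Substituting $d=n_2a+n_1b$ into $g(r,n_1n_2)$, the additive character splits as $e(rd/(n_1n_2))=e(ra/n_1)\,e(rb/n_2)$, and multiplicativity of the symbol in its denominator together with $d\equiv n_2a\pmod{n_1}$, $d\equiv n_1b\pmod{n_2}$ gives $\left(\frac{d}{n_1n_2}\right)_3=\left(\frac{n_2}{n_1}\right)_3\left(\frac{a}{n_1}\right)_3\left(\frac{n_1}{n_2}\right)_3\left(\frac{b}{n_2}\right)_3$. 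Hence the double sum factors, yielding $g(r,n_1n_2)=\left(\frac{n_2}{n_1}\right)_3\left(\frac{n_1}{n_2}\right)_3\,g(r,n_1)g(r,n_2)$. Cubic reciprocity (applicable since $n_1,n_2\equiv 1\pmod 3$) turns the prefactor into $\left(\frac{n_1}{n_2}\right)_3^2=\overline{\left(\frac{n_1}{n_2}\right)_3}$, and applying \eqref{1} with $m=n_1$ to recognize $\overline{\left(\frac{n_1}{n_2}\right)_3}\,g(r,n_2)=g(rn_1,n_2)$ produces the stated identity $g(r,n_1n_2)=g(rn_1,n_2)g(r,n_1)$.

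None of the steps is genuinely hard; the only points needing care are (i) verifying that the change of variables in \eqref{1} and the CRT parametrization in \eqref{2} are honest bijections of residue systems, so that the sums are merely reindexed, and (ii) tracking the reciprocity factor correctly — remembering that $\left(\frac{n_1}{n_2}\right)_3^2$ is the complex conjugate and not $1$ is precisely what, in combination with \eqref{1}, forces the asymmetric shape of \eqref{2}.
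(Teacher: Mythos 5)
Your proof is correct. The paper records this lemma without proof, labeling \eqref{1} and \eqref{2} as ``standard algebraic properties'' of Gauss sums, and the argument you give is exactly the standard one: \eqref{1} by the change of variables $d\mapsto m^{-1}d$ in the defining sum (valid since $m$ is a unit mod $n$, and the additive character depends only on $d\bmod n$), and \eqref{2} by the CRT reindexing $d=n_2a+n_1b$, multiplicativity of the residue symbol in both arguments, and cubic reciprocity (applicable because $n_1,n_2\equiv1\pmod3$) to collapse $\left(\frac{n_2}{n_1}\right)_3\left(\frac{n_1}{n_2}\right)_3$ into $\overline{\left(\frac{n_1}{n_2}\right)_3}$, which \eqref{1} then absorbs into $g(rn_1,n_2)$.
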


Note that formulas \eqref{1} and \eqref{2} in Lemma \ref{properties of g} give an important property of $g(c)$, which is twisted multiplicativity. Namely, for $c=ab$ where $a,b \in \mathbb{Z}[\omega]$ and $(a,b)=1$, we have that \cite[pp. 443, 445]{H}
 \[g(ab)=\overline{\left(\frac{a}{b}\right)} g(a)g(b).\] 
 The next lemma explains the local behavior of the Gauss sums at prime arguments.
\begin{lemma}\label{local properties of g}
Let $\pi, r\in\mathbb{Z}[\omega]$ with $\pi,r\equiv 1\pmod 3$, $\pi$ a prime, and $(\pi,r)=1$. Let $k,j$ be integers with $k>0$ and $j\geq 0$.

If $k\neq j+1$, then 
\[ g(r\pi^j,\pi^k)=\begin{cases}
            
			\varphi(\pi^k) &\text{if $3\mid k,\ k\leq j$}\\
            0 & \text{otherwise}.
		 \end{cases}\]
		 
If $k=j+1$, then 
\[ g(r\pi^j,\pi^k)=N(\pi^j)\times\begin{cases}
            -1 & \text{if $3\mid k$}\\
			g(r,\pi) & \text{if $k\equiv 1\pmod 3$}\\
            \overline{g(r,\pi)} & \text{if $k\equiv 2\pmod 3$}.
		 \end{cases}\]

\end{lemma}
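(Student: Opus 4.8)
The plan is to expand $g(r\pi^j,\pi^k)=\sum_{d(\mathrm{mod}\ \pi^k)}\cubic{d}{\pi^k}e\left(\frac{r\pi^j d}{\pi^k}\right)$ directly and reduce it to a sum over a much smaller modulus. Two simplifications occur at once. First, $\cubic{d}{\pi^k}=\cubic{d}{\pi}^{k}$ depends only on $d$ modulo $\pi$, so it is the principal character mod $\pi$ (the indicator of $(d,\pi)=1$) when $3\mid k$, and it is the nontrivial cubic character $\cubic{\cdot}{\pi}$ or its conjugate $\overline{\cubic{\cdot}{\pi}}$ according as $k\equiv 1$ or $2\pmod 3$. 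Second, $e(r\pi^j d/\pi^k)=e(rd/\pi^{k-j})$, which equals $1$ when $j\geq k$, since the argument then lies in $\Z[\omega]$ and has integral trace. Writing $m=k-j$: when $m\geq 1$ the whole summand depends only on $d$ modulo $\pi^{m}$, so $g(r\pi^j,\pi^k)=N(\pi^{j})\sum_{d(\mathrm{mod}\ \pi^{m})}\cubic{d}{\pi}^{k}e(rd/\pi^{m})$; when $m\leq 0$ it depends only on $d$ modulo $\pi$, so $g(r\pi^j,\pi^k)=N(\pi^{k-1})\sum_{d(\mathrm{mod}\ \pi)}\cubic{d}{\pi}^{k}$.

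I would then split into three cases matching the statement. (i) If $j\geq k$ (which forces $k\neq j+1$): by the second reduction the sum equals $N(\pi^{k-1})\varphi(\pi)=\varphi(\pi^k)$ when $3\mid k$, and equals $0$ when $3\nmid k$ by orthogonality of the nontrivial cubic character mod $\pi$; this is the first branch. (ii) If $j<k$ but $k\neq j+1$, so $m\geq 2$: here I would show $\sum_{d(\mathrm{mod}\ \pi^{m})}\cubic{d}{\pi}^{k}e(rd/\pi^{m})=0$ by the standard device of substituting $d\mapsto d(1+\pi^{m-1}u)$ for $u$ modulo $\pi$. This is a bijection of residues mod $\pi^m$, the cubic symbol is unchanged because it is induced from a character mod $\pi$ and $m-1\geq 1$, and averaging over $u$ introduces $\sum_{u(\mathrm{mod}\ \pi)}e(rdu/\pi)$, which vanishes unless $\pi\mid d$, in which case the cubic symbol already kills the term (here $(r,\pi)=1$ is used). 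Thus $g(r\pi^j,\pi^k)=0$, the ``otherwise'' branch. (iii) If $k=j+1$, so $m=1$: then $g(r\pi^j,\pi^k)=N(\pi^j)\sum_{d(\mathrm{mod}\ \pi)}\cubic{d}{\pi}^{k}e(rd/\pi)$. For $3\mid k$ the inner sum is the Ramanujan sum $\sum_{(d,\pi)=1}e(rd/\pi)=-1$ (again using $(r,\pi)=1$), giving $-N(\pi^j)$; for $k\equiv 1\pmod 3$ it is exactly $g(r,\pi)$ by definition; for $k\equiv 2\pmod 3$ it is $\sum_{d(\mathrm{mod}\ \pi)}\overline{\cubic{d}{\pi}}\,e(rd/\pi)$, which I would identify with $\overline{g(r,\pi)}$ using $\overline{e(z)}=e(-z)$, the substitution $d\mapsto -d$, and $\cubic{-1}{\pi}=1$ (because $-1=(-1)^3$ is a cube, so it lies in the kernel of the cubic residue symbol).

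The two points that need care are the vanishing in case (ii) --- one must check the substitution is genuinely a bijection of $(\Z[\omega]/\pi^m)$ and that multiplication by $1+\pi^{m-1}u$ leaves the cubic symbol invariant precisely because its conductor is $\pi$ and $m-1\geq 1$ --- and the conjugation identity in case (iii) for $k\equiv 2\pmod 3$, which rests on $\cubic{-1}{\pi}=1$. I expect the argument in case (ii) to be the main (though still minor) obstacle; everything else is bookkeeping, namely checking that the conditions $k\leq j$, then $k>j$ with $k\neq j+1$, and then $k=j+1$ line up with the three displayed branches.
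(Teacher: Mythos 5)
The paper states Lemma \ref{local properties of g} without proof, describing it as a ``standard algebraic property,'' so there is no in-text argument to compare against. Your proof is correct and complete: the reduction $\cubic{d}{\pi^k}=\cubic{d}{\pi}^k$ combined with $e(r\pi^j d/\pi^k)=e(rd/\pi^{k-j})$ correctly collapses the modulus, producing the three regimes $k\le j$, $1<k-j$, and $k-j=1$; the averaging substitution $d\mapsto d(1+\pi^{m-1}u)$ for $m=k-j\ge 2$ is a bona fide bijection of $\Z[\omega]/\pi^m$ fixing the cubic symbol (since $1+\pi^{m-1}u\equiv 1\bmod\pi$), and the orthogonality of $e(rdu/\pi)$ together with $(r,\pi)=1$ forces the vanishing; the Ramanujan sum evaluation $\sum_{(d,\pi)=1}e(rd/\pi)=-1$ and the identity $\cubic{-1}{\pi}=1$ (because $-1=(-1)^3$ is a cube) handle the $k=j+1$ branch. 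One minor wording quibble: in case (ii) you say the cubic symbol ``has conductor $\pi$,'' which is not literally true when $3\mid k$ (then $\cubic{\cdot}{\pi}^k$ is principal, of conductor $1$); the fact you actually use, and the one that matters, is simply that the symbol factors through $\Z[\omega]/\pi$, so $1+\pi^{m-1}u\equiv 1\bmod\pi$ suffices.
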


 \subsection{Hecke $L$-functions over $\mathbb{Q}(\omega)$}
 
 Let $\chi_q\in \mathcal{F}_3$ defined in \eqref{family F_3}. Its Hecke $L$-function defined as
 \[L(s,\chi_q)=\sum_{0\neq\mathfrak{m}\subset\mathbb{Z}[\omega]}\frac{\chi_q(\mathfrak{m})}{N(\mathfrak{m})^s}, \]
  converges absolutely for $\Re(s)>1$ and admits the Euler product
 \[L(s,\chi_q)=\prod_{\mathfrak{p}}\left(1-\frac{\chi_q(\mathfrak{p})}{N(\mathfrak{p})^s}\right)^{-1}, \]
 where the product runs over nonzero prime ideals of $\mathbb{Z}[\omega]$. Note that $\chi_q(\mathfrak{m})=0$ if $\mathfrak{m}$ is not coprime to $q$. Hecke showed that the completed $L$-function defined as
 \[\Lambda(s,\chi_q):=\left(|D_K|N(q)\right)^{s/2}(2\pi)^{-s}\Gamma(s)L(s,\chi_q), \] 
 (where $D_K=-3$ the discriminant of $\mathbb{Q}(\omega)$) satisfies the functional equation
 \[\Lambda(s,\chi_q)=\epsilon(\chi_q)\Lambda(1-s,\overline{\chi_q}), \]
 where $\epsilon(\chi_q)=\tilde{g}(q)$ is the root number. We note that $L(s,\chi_q)$ coincides with the $L$-function associated to the newform $f(z)\in S_k(\Gamma_0(N),\chi)$ given by
 \[f(z)=\sum_{\mathfrak{m}\subset\mathbb{Z}[\omega]}\chi_q(\mathfrak{m})e^{2\pi izN(\mathfrak{m})} \] where $$N=|D_K|N(q)\, \ \text{and}\, \ \chi(n)=\left(\frac{D_K}{n}\right)_2\chi_q(n),$$ see \cite[Theorem 12.5]{IwaniecTopicsBook}. 
 
 Inside the critical strip, analytically convenient expressions of $L(s,\chi_q)$ are given by the approximate functional equation:
 
 \begin{proposition}(\cite[Theorem 5.3]{IwK})\label{AFE}
 Let $G(u)$ be any function which is holomorphic and bounded in the strip $-4<\Re(u)<4$, even, and normalized by $G(0)=1$. Let $X>0$, then for $s=\sigma+it$ in the strip $0\leq \sigma\leq 1$ we have
 \begin{align*}
L(s,\chi_q)=\sum_{0\neq \mathfrak{m}\subset\mathbb{Z}[\omega]}\frac{\chi_q(\mathfrak{m})}{N(\mathfrak{m})^s}V_s\left(\frac{N(\mathfrak{m})}{X\sqrt{3N(q)}}\right)+\epsilon(s,\chi_q)\sum_{0\neq \mathfrak{m}\subset\mathbb{Z}[\omega]}\frac{\overline{\chi_q(\mathfrak{m})}}{N(\mathfrak{m})^{1-s}}V_{1-s}\left(\frac{XN(\mathfrak{m})}{\sqrt{3N(q)}}\right)
 \end{align*}
 where $V_s(y)$ is a smooth function defined by
 \[V_s(y)=\frac{1}{2\pi i}\int_{(3)}y^{-u}\frac{G(u)}{u}g_s(u)\,du \ \
 \text{with}\ \ g_s(u)=(2\pi)^{-u}\frac{\Gamma\left(s+u\right)}{\Gamma\left(s\right)}, \]
and
\begin{equation}\label{root number}
\epsilon(s,\chi_q)=(2\pi)^{2s-1}(3N(q))^{1/2-s}\frac{\Gamma\left(1-s\right)}{\Gamma\left(s\right)}\tilde{g}(q).
\end{equation}
 \end{proposition}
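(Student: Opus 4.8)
This is the specialization of the general approximate functional equation \cite[Theorem 5.3]{IwK} to the field $\mathbb{Q}(\omega)$ and the conductor $q$, and I would obtain it by the classical contour-integration argument applied to the completed $L$-function $\Lambda(s,\chi_q)$. Fix $X>0$ and $s$ in the strip, and consider
\[I:=\frac{1}{2\pi i}\int_{(3)}\Lambda(s+u,\chi_q)\,X^u\,\frac{G(u)}{u}\,du.\]
First I would evaluate $I$ directly on the line $\Re(u)=3$. There $L(s+u,\chi_q)=\sum_{\mathfrak m}\chi_q(\mathfrak m)N(\mathfrak m)^{-s-u}$ converges absolutely, so after writing $\Lambda(s+u,\chi_q)=(3N(q))^{(s+u)/2}(2\pi)^{-(s+u)}\Gamma(s+u)L(s+u,\chi_q)$ and interchanging the sum with the integral, the inner integral becomes $\Gamma(s)V_s$ by the very definition of $V_s$, giving
\[I=(3N(q))^{s/2}(2\pi)^{-s}\Gamma(s)\sum_{0\neq\mathfrak m\subset\mathbb{Z}[\omega]}\frac{\chi_q(\mathfrak m)}{N(\mathfrak m)^{s}}\,V_s\!\left(\frac{N(\mathfrak m)}{X\sqrt{3N(q)}}\right).\]

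Second, I would move the contour from $\Re(u)=3$ to $\Re(u)=-3$ (permissible since $G$ is holomorphic and bounded on $-4<\Re(u)<4$). Because $\chi_q$ is a nontrivial primitive Hecke character, $L(s,\chi_q)$ — which by \cite[Theorem 12.5]{IwaniecTopicsBook} is the $L$-function of a newform — is entire, so the only pole crossed is the simple pole of $G(u)/u$ at $u=0$, of residue $\Lambda(s,\chi_q)=(3N(q))^{s/2}(2\pi)^{-s}\Gamma(s)L(s,\chi_q)$ since $G(0)=1$. On the shifted line I substitute $u\mapsto -u$, use that $G$ is even to rewrite $G(-u)/(-u)=-G(u)/u$, and apply the functional equation $\Lambda(s-u,\chi_q)=\tilde g(q)\,\Lambda(1-s+u,\overline{\chi_q})$. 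Expanding $\Lambda(1-s+u,\overline{\chi_q})$ into its Dirichlet series (absolutely convergent at $\Re(u)=3$) and again interchanging sum and integral identifies the remaining integral as $\Gamma(1-s)V_{1-s}$, so that
\[I=(3N(q))^{s/2}(2\pi)^{-s}\Gamma(s)L(s,\chi_q)-\tilde g(q)(3N(q))^{(1-s)/2}(2\pi)^{s-1}\Gamma(1-s)\sum_{\mathfrak m}\frac{\overline{\chi_q(\mathfrak m)}}{N(\mathfrak m)^{1-s}}\,V_{1-s}\!\left(\frac{XN(\mathfrak m)}{\sqrt{3N(q)}}\right).\]
Equating the two evaluations of $I$ and dividing through by $(3N(q))^{s/2}(2\pi)^{-s}\Gamma(s)$ yields the asserted identity, the factor $\epsilon(s,\chi_q)=(2\pi)^{2s-1}(3N(q))^{1/2-s}\Gamma(1-s)\Gamma(s)^{-1}\tilde g(q)$ of \eqref{root number} appearing precisely as the quotient of the two sets of archimedean factors (the minus sign produced by $u\mapsto -u$ cancels against the minus sign in moving the residue across, leaving the stated $+\epsilon(s,\chi_q)$).

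The substance of the proof is analytic bookkeeping, not new ideas, and I would supply three routine ingredients: (i) Stirling bounds showing that $\Gamma(s+u)G(u)/u$ decays fast enough in vertical strips for the defining integral of $V_s$, and every application of Fubini, to be absolutely convergent; (ii) a convexity (Phragmén–Lindelöf) bound for $L(s+u,\chi_q)$ combined with the exponential decay of the gamma factor, so that $\Lambda(s+u,\chi_q)X^uG(u)/u\to 0$ on the horizontal segments $\Im(u)=\pm T$ as $T\to\infty$ — this is what legitimizes the contour shift; and (iii) care with the archimedean factor: since $\mathbb{Q}(\omega)$ is imaginary quadratic it has a single complex place, whose local factor at infinity is $\Gamma_{\mathbb{C}}(s)\asymp(2\pi)^{-s}\Gamma(s)$, which is exactly why a single ratio $\Gamma(s+u)/\Gamma(s)$ (rather than a product of $\Gamma((s+u)/2)$-type factors, as over $\mathbb{Q}$) enters $g_s(u)$. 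Of these, I expect step (ii) — establishing enough decay of $\Lambda(s+u,\chi_q)$ on horizontal lines to justify pushing the contour to $\Re(u)=-3$ — to be the only point requiring genuine care, and even there the exponential decay of the $\Gamma$-factor does essentially all the work.
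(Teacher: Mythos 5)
Your proof is correct and reproduces precisely the contour-shift argument used in \cite[Theorem 5.3]{IwK}, which is exactly the reference the paper cites for this proposition without supplying an independent proof, so you are taking the same approach as the source. The one imprecision worth flagging is that the contour shift from $\Re(u)=3$ to $\Re(u)=-3$ requires the completed function $\Lambda(s+u,\chi_q)$, not merely $L$, to be entire in that strip (the trivial zeros of $L$ must cancel the poles of the $(2\pi)^{-(s+u)}\Gamma(s+u)$ factor), which holds for the primitive Hecke character $\chi_q$ and is the standard fact one should invoke there.
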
 
 
 The smooth weight $V_s$ and its derivatives satisfy the following bounds which we will find useful throughout.
 
 \begin{lemma}(\cite[Proposition 5.4]{IwK})\label{Bounds on V_s}
 Let $a\in\mathbb{Z}_{\geq 0}$ and $\alpha>0$. For $\Re(s+1)\geq 3\alpha,$ the derivatives of $V_s(y)$ satisfy
 \[y^aV_s^{(a)}(y)\ll_{a,E}\left(1+\frac{y}{\sqrt{(|s|+3)(|s+1|+3)}}\right)^{-E} \]
 for any $E>0$, and
 \[ y^aV_s^{(a)}(y)=\delta_a+O_{a,\alpha}\left(\left(\frac{y}{\sqrt{(|s|+3)(|s+1|+3)}}\right)^{\alpha}\right)\]
 where $\delta_0=1$ and $\delta_a=0$ for $a\geq 1$.
 \end{lemma}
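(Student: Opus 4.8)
The statement is \cite[Proposition 5.4]{IwK}, and the plan is to reproduce its proof: a contour–shift argument starting from the integral representation of $V_s$ in Proposition \ref{AFE}. Throughout one takes the standard weight $G(u)=e^{u^2}$, which is entire, bounded in $-4<\Re u<4$, even, satisfies $G(0)=1$, and decays like $e^{-(\Im u)^2}$ on vertical lines; this choice is permitted by the hypotheses of Proposition \ref{AFE} and is what makes the integrals converge absolutely and the contour shifts below legitimate. Since $\frac{d^a}{dy^a}y^{-u}=(-1)^au(u+1)\cdots(u+a-1)\,y^{-u-a}$, differentiating under the integral sign (justified by absolute convergence) gives
\[
y^aV_s^{(a)}(y)=\frac{1}{2\pi i}\int_{(3)}y^{-u}\,\frac{P_a(u)}{u}\,G(u)\,g_s(u)\,du,\qquad P_a(u):=(-1)^au(u+1)\cdots(u+a-1)
\]
for $a\ge1$, with $P_0(u):=1$ for $a=0$. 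The quotient $P_a(u)/u$ is a polynomial of degree $a-1$ when $a\ge1$, while for $a=0$ it equals $1/u$; hence the integrand is holomorphic in $\{-4<\Re u<4\}$ except for the poles of $\Gamma(s+u)$ (on the lines $\Re u=-\Re s-n$, $n\ge0$) and, \emph{only when} $a=0$, a simple pole at $u=0$ with residue $G(0)g_s(0)=1$.

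To prove the asymptotic formula (the second displayed bound), I would shift the contour from $\Re u=3$ to $\Re u=-\alpha$; the hypothesis $\Re(s+1)\ge3\alpha$ is exactly what ensures this can be done without crossing any pole of $g_s$, so the only residue picked up is the one at $u=0$, contributing $\delta_0=1$ when $a=0$ and nothing when $a\ge1$ — this is the origin of $\delta_a$. On the new line $|y^{-u}|=y^{\alpha}$, so it remains to estimate $\int_{(-\alpha)}|P_a(u)|\,|G(u)|\,|g_s(u)|\,|du|$. Here one invokes Stirling in the form $|\Gamma(\sigma'+i\tau')|\asymp_{\sigma'}(1+|\tau'|)^{\sigma'-1/2}e^{-\pi|\tau'|/2}$, uniformly for $\sigma'$ in a fixed compact set away from the poles, applied to $\Gamma(s+u)/\Gamma(s)$ with $\Re s\in[0,1]$: writing $u=-\alpha+iv$, the ratio is $\ll(1+|t+v|)^{\sigma-\alpha-1/2}(1+|t|)^{1/2-\sigma}e^{-\pi(|t+v|-|t|)/2}$, and the Gaussian decay of $G$ together with the polynomial factor $P_a$ makes the $v$–integral converge and produces a bound $\ll_{a,\alpha}(1+|t|)^{-\alpha}\asymp\big((|s|+3)(|s+1|+3)\big)^{-\alpha/2}$ (the two factors being comparable since there is a single $\Gamma$, but I keep the product form to match \cite{IwK}). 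Combining, $y^aV_s^{(a)}(y)=\delta_a+O_{a,\alpha}\big((y/\sqrt{(|s|+3)(|s+1|+3)})^{\alpha}\big)$.

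For the rapid–decay bound (the first displayed bound), set $Q_s:=\sqrt{(|s|+3)(|s+1|+3)}$. When $y\le Q_s$ one has $(1+y/Q_s)^{-E}\asymp_E 1$, and the uniform bound $y^aV_s^{(a)}(y)\ll_a 1$ follows from the previous paragraph applied with any small admissible $\alpha>0$ (for $a=0$ via $V_s(y)=1+O((y/Q_s)^{\alpha})$). When $y\ge Q_s$ I would instead shift the contour far to the right, to $\Re u=c$ for $c$ arbitrarily large: no poles are crossed since $\Re(s+u)=\sigma+c>0$ there, $G$ is entire with Gaussian decay, and the same Stirling estimate gives $\int_{(c)}|P_a||G||g_s|\,|du|\ll_{a,c}Q_s^{c}$, hence $y^aV_s^{(a)}(y)\ll_{a,c}y^{-c}Q_s^{c}=(y/Q_s)^{-c}$. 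Since $c$ is arbitrary and $1+y/Q_s\asymp y/Q_s$ in this range, gluing the two cases yields $y^aV_s^{(a)}(y)\ll_{a,E}(1+y/Q_s)^{-E}$ for every $E>0$.

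The only steps requiring care — and the main obstacle in making the write-up clean — are the Stirling estimates: one must track the dependence on $|s|$ precisely enough to land on the quantity $(|s|+3)(|s+1|+3)$ rather than a cruder power of $|s|$, and verify that the implied constants stay bounded as $\Re s$ approaches the edges $0$ and $1$ of the strip $0\le\Re s\le1$; and the pole bookkeeping in the leftward shift, which is precisely what the hypothesis $\Re(s+1)\ge3\alpha$ is imposed to control. Everything else is routine.
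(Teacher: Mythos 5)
The paper does not prove this lemma; it cites it as \cite[Proposition~5.4]{IwK} and uses it as a black box, so there is no internal proof to compare against. Your write-up is a faithful reconstruction of the standard Iwaniec--Kowalski argument (differentiate under the integral, shift the contour, invoke Stirling), and the overall structure is correct.

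The one place I would push back is your treatment of the pole bookkeeping in the leftward shift. You claim that ``the hypothesis $\Re(s+1)\ge 3\alpha$ is exactly what ensures this can be done without crossing any pole of $g_s$.'' That is not what the inequality, read literally, does. Writing $g_s(u)=(2\pi)^{-u}\Gamma(s+u)/\Gamma(s)$, the rightmost pole is at $u=-s$, and the shift from $\Re u=3$ to $\Re u=-\alpha$ avoids it precisely when $\Re s>\alpha$. The stated hypothesis only gives $\Re s\ge 3\alpha-1$, which does not imply $\Re s>\alpha$ once $\alpha\le 1/2$. What is actually needed is the full IwK condition $\Re(s+\kappa_j)\ge 3\alpha$ for every local parameter $\kappa_j$; here $\kappa_1=0$ and $\kappa_2=1$, so one needs $\Re s\ge 3\alpha$ \emph{in addition to} $\Re(s+1)\ge 3\alpha$, and it is the former (omitted in the paper's restatement) that controls the pole at $u=-s$. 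I suspect the omission is an infelicity in the paper's transcription of \cite{IwK}, but as written your justification inherits it, and you should either restore the $\kappa_1=0$ condition or explain why the weaker hypothesis suffices in the application.

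Two smaller remarks. First, when you shift right to $\Re u=c$ for large $c$, you rely on $G(u)=e^{u^2}$ being entire; note that $|e^{u^2}|=e^{(\Re u)^2-(\Im u)^2}$ grows like $e^{c^2}$ on that line, which is harmless because the implied constant is allowed to depend on $E$, but your phrase ``$G$ is entire with Gaussian decay'' should be understood as decay in the imaginary direction only. Second, Proposition~\ref{AFE} in the paper only asserts boundedness of $G$ in $|\Re u|<4$, so the claim ``for any $E>0$'' in Lemma~\ref{Bounds on V_s} implicitly requires a $G$ that is controllable in an arbitrarily wide strip; your choice $G(u)=e^{u^2}$ supplies exactly that, and it is worth stating explicitly that the bounds are being established for that specific (admissible) choice of test function rather than for every $G$ meeting the hypotheses of Proposition~\ref{AFE}.
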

 
 \begin{remark}
 Note that in Proposition \ref{AFE} and Lemma \ref{Bounds on V_s} we are using the fact that the gamma factor $\gamma(s,\chi_q)$ of $L(s,\chi_q)$ is given by (see section 5.10 in \cite{IwK})
 \[\gamma(s,\chi_q)=\pi^{-s}\Gamma\left(\frac{s}{2}\right)\Gamma\left(\frac{s+1}{2}\right)=2\sqrt{\pi}(2\pi)^{-s}\Gamma(s), \]
 where the last equality follows from the duplication formula for $\Gamma(s)$.
 \end{remark}
 
 We will now start using the variable $z$ for the input of the $L$-functions, and leave $s$ as a variable of integration.
 
 Recall that each nonzero ideal $\mathfrak{m}\subset\mathbb{Z}[\omega]$ has a unique generator of the form $\lambda^km$ where $k\geq 0$ and $m\in\mathbb{Z}[\omega]$ with $m\equiv 1 \pmod 3$. Hence $\chi_q(\mathfrak{m})=\left(\frac{m}{q}\right)_3$, since $\left(\frac{\lambda}{q}\right)_3=1$ from \eqref{supplemet}. Thus, in view of Proposition \ref{AFE}, the first moment equals

\[\sum_{\substack{q\in \mathbb{Z}[\omega]\\ q\equiv 1(\mathrm{mod}\ 9)\\ q\, \text{SF}}}L(z,\chi_q)\omega\left(\frac{N(q)}{Q}\right)=\mathcal{M}_1(z)+\mathcal{M}_2(z),\]
where $\mathcal{M}_1(z)$ is the principal sum given by
\[ \mathcal{M}_1(z):=\sum_{\substack{q\in \mathbb{Z}[\omega]\\ q\equiv 1(\mathrm{mod}\ 9)\\ q\, \text{SF}}}\sum_{k\geq 0}\sum_{\substack{m\in \mathbb{Z}[\omega]\\ m\equiv 1(\mathrm{mod}\ 3)}}\frac{\left(\frac{m}{q}\right)_3}{3^{kz}N(m)^{z}}V_{z}\left(3^{k-\frac{1}{2}}\frac{N(m)}{A_q}\right)\omega\left(\frac{N(q)}{Q}\right),\]
and $\mathcal{M}_2(z)$ is the dual sum given by
\[\mathcal{M}_2(z):=\sum_{\substack{q\in \mathbb{Z}[\omega]\\  q\equiv 1(\mathrm{mod}\ 9)\\ q\, \text{SF}}}\epsilon(z,\chi_q)\sum_{k\geq 0}\sum_{\substack{m\in \mathbb{Z}[\omega]\\  m\equiv 1(\mathrm{mod}\ 3)}}\frac{\overline{\left(\frac{m}{q}\right)_3}}{3^{k(1-z)}N(m)^{1-z}}V_{1-z}\left(3^{k-\frac{1}{2}}\frac{N(m)}{B}\right)\omega\left(\frac{N(q)}{Q}\right).\]
Here, we chose the parameters of the approximate functional equation as in \cite{BY}. Take $A_q=\frac{AN(q)}{Q}$ and $B_q=B$, where $A$ and $B$ are defined as $AB=Q$, which gives $A_qB=N(q)$ as required. Note that in this case $A_q=\frac{N(q)}{B}=A\frac{N(q)}{Q}\asymp A$.

Finally, let $\widetilde{w}$ be the Mellin transform of $w$ defined by
\begin{equation}\label{Mellin w}
\widetilde{w}(z)=\int_0^{\infty}x^{z-1}w(x)\,dx.
\end{equation}

In what follows, we obtain asymptotics for both the principal and the dual sums. We summarize these in the following theorem.

\begin{theorem}\label{Asymptotics}
For any $\epsilon>0$ and $z\in [0,1]$ we have
\[\mathcal{M}_1(z)=\begin{cases}
C_z\widetilde{\omega}(1)Q+D_z\widetilde{\omega}(4/3-z)QA^{1/3-z}+O\left(QA^{-z}+Q^{1/2+\epsilon}A^{1-z+\epsilon}\right)& \text{if} \ \ z\neq 0,\frac{1}{3}\\
 D_0\widetilde{\omega}(4/3)QA^{1/3}+O(QA^{\epsilon}+Q^{1/2+\epsilon}A^{1+\epsilon}) & \text{if} \ z=0\\
 D_0\widetilde{\omega}(4/3)QA^{1/3}+C_0\widetilde{\omega}(1)Q+O(QA^{-1/6+\epsilon}+Q^{1/2+\epsilon}A^{1+\epsilon})& \text{if} \ z=0 +\text{GRH}\\
 C_{1/3}\widetilde{\omega}(1)Q\log A+C_2Q+O\left(QA^{-1/3}+Q^{1/2+\epsilon}A^{2/3+\epsilon}\right)& \text{if} \ z=\frac{1}{3}
\end{cases}
\]
where $C_z, D_z, C_{1/3}$ and $C_2$ are constants given explicitly in \eqref{Cubic C_z}, \eqref{Cubic D_z}, \eqref{Cubic C_1/3}, and \eqref{Cubic C_2} respectively, and by GRH we mean RH for the Dedekind zeta function $\zeta_{\mathbb{Q}(\omega)}$. Moreover,  
\[\mathcal{M}_2(z)=\begin{cases}-D_z\widetilde{w}\left(4/3-z\right)Q^{4/3-z}B^{z-1/3}+E'_z\widetilde{w}\left(4/3-z\right)Q^{4/3-z}\\
\hspace{1.5cm}+O\left(Q^{4/3-z}B^{z-2/3+\epsilon}+Q^{1-z+\epsilon}B^{1/4+z+\epsilon}\right)& \text{if} \ z\neq \frac{1}{3}\\
C_{1/3}\widetilde{\omega}(1)Q\log B+C_2'\widetilde{\omega}(1)Q+O\left(QB^{-1/3+\epsilon}+Q^{2/3+\epsilon}B^{7/12+\epsilon}\right)& \text{if} \ z=\frac{1}{3}

\end{cases}  \]
where $E'_z$ and $C'_2$ are constants given explicitly in \eqref{E'_z} and \eqref{Cubic C_2'} respectively.
\end{theorem}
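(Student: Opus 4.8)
The plan is to prove the two asymptotics separately, since $\mathcal{M}_1(z)$ and $\mathcal{M}_2(z)$ require genuinely different inputs.

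For $\mathcal{M}_1(z)$, I would first swap the order of summation, putting the sum over $q$ innermost. The character $\left(\frac{m}{q}\right)_3$ as a function of $q$ is, by cubic reciprocity (since both $m$ and $q$ are $\equiv 1 \pmod 3$), essentially $\left(\frac{q}{m}\right)_3$, a fixed cubic character modulo $m$; this is trivial precisely when $m$ is a perfect cube times a unit. The main term then comes from the \emph{diagonal} $m = \square^3$: summing $\sum_{q\equiv 1(9),\, q\,\mathrm{SF}} w(N(q)/Q)$ gives $c\,\widetilde w(1)Q$ up to a standard squarefree density constant, and summing the cube terms $\sum_k \sum_{m=\ell^3} 3^{-kz} N(\ell)^{-3z} V_z(\cdots)$ against this produces, after a Mellin-inversion / contour shift in the $V_z$-integral, the main term $C_z\widetilde w(1)Q$ together with the secondary term $D_z\widetilde w(4/3-z)QA^{1/3-z}$ coming from the pole of $\zeta_{\Q(\omega)}(3s+3z-1)$ (or its shift) — this is exactly where the $4/3-z$ exponent and the phase transition at $z=1/3$ (a double pole, giving $\log A$) and at $z=0$ originate. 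For the off-diagonal $m \neq \square^3$, I would use the Pólya–Vinogradov / large-sieve input: bound the sum over $q$ of the nontrivial character $\left(\frac{q}{m}\right)_3$ either trivially on short ranges or via Theorem \ref{Bound on 2nd moment} (Heath-Brown's cubic large sieve) to get the error $Q^{1/2+\epsilon}A^{1-z+\epsilon}$, with the cruder $QA^{-z}$ term absorbing the tail of $V_z$; the $z=0$ case needs GRH only to push the cube-sum main term $C_0\widetilde w(1)Q$ out of the error.

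For $\mathcal{M}_2(z)$, I would first insert $\epsilon(z,\chi_q) = (2\pi)^{2z-1}(3N(q))^{1/2-z}\tfrac{\Gamma(1-z)}{\Gamma(z)}\tilde g(q)$ from \eqref{root number}, so that the sum over $q$ becomes a sum of the normalized cubic Gauss sum $\tilde g(q)$ twisted by $\overline{\left(\frac{m}{q}\right)_3}$. By the twisted multiplicativity of $g$ and reciprocity, $\overline{\left(\frac{m}{q}\right)_3} g(q)$ can be rewritten (for $(m,q)=1$) in terms of the generalized Gauss sum $g(\bar m, q)$ or $g(m,q)$, so the $q$-sum over a fixed $m$ is essentially $\sum_q g(m,q) N(q)^{-w} w(N(q)/Q)$ — a value (via Mellin inversion in $w$) of \textbf{Kubota's metaplectic Dirichlet series} $\psi_m(w) = \sum_{q} g(m,q) N(q)^{-w}$ studied in section 4. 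The deep input of Kubota–Patterson and Heath-Brown–Patterson gives the meromorphic continuation of $\psi_m(w)$ with a pole at $w = 4/3$ (in the normalization at hand) whose residue is expressed through the Fourier coefficients of the metaplectic theta function; summing these residues over all $m$ — against $3^{-k(1-z)}N(m)^{-(1-z)}V_{1-z}(\cdots)$ and the $\widetilde w$-factor — yields the main term of $\mathcal{M}_2$, which splits into $-D_z\widetilde w(4/3-z)Q^{4/3-z}B^{z-1/3}$ (an ``artificial'' piece that will cancel against $\mathcal{M}_1$'s secondary term when $B = Q/A$) plus the genuine secondary term $E'_z\widetilde w(4/3-z)Q^{4/3-z}$. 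The error terms $Q^{4/3-z}B^{z-2/3+\epsilon}$ and $Q^{1-z+\epsilon}B^{1/4+z+\epsilon}$ come respectively from the contribution of the next singularity / polynomial-growth bound on $\psi_m(w)$ on vertical lines, and from a large-sieve bound for cubic Gauss sums (again essentially Heath-Brown's sieve, the $B^{1/4}$ reflecting the Gauss-sum normalization). At $z = 1/3$ the pole of $\psi_m$ collides with the $\Gamma$-factor behavior and with the cube main term, producing the $Q\log B$ term; this is the case requiring the most care.

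The main obstacle is the dual sum at the level of the metaplectic Dirichlet series: one must extract \emph{all} the residues of $\psi_m(w)$ uniformly in $m$, re-sum them over $m$ (which reintroduces another Dirichlet series in $m$, built from the metaplectic theta Fourier coefficients, whose own analytic properties — pole location and residue — must be pinned down exactly to get the \emph{explicit} constants $D_z, E'_z$), and track the arithmetic factors precisely enough that the cancellation $-D_z\widetilde w(4/3-z)Q^{4/3-z}B^{z-1/3}$ against the $\mathcal{M}_1$ secondary term is exact. Controlling the error term here also requires subconvexity-type or large-sieve bounds on the metaplectic object on vertical lines, which is where the quality of the final error exponent is decided. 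Everything else — the $\mathcal{M}_1$ diagonal, the squarefree sieving, the contour shifts in $V_z$ and $\widetilde w$ — is routine by comparison.
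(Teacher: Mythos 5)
Your plan follows essentially the same strategy as the paper: for $\mathcal{M}_1$, sieve the squarefree condition, apply cubic reciprocity, detect the congruence $q\equiv 1\pmod 9$ by orthogonality to write the $q$-sum as an $L$-function $L(s,\psi_{a,b}\chi_m)$, extract the $s=1$ pole (only when $m$ is a cube and $\psi_{a,b}$ is trivial) for the main term, and control the shifted contour via the second moment bound (Theorem \ref{Bound on 2nd moment}); for $\mathcal{M}_2$, convert the Gauss-sum twist into Kubota's $\psi(r,s)$, invoke Patterson's residue formula at $s=4/3$, and observe the collision of poles at $z=1/3$. This matches the paper's architecture, including the eventual cancellation $D'_z=-D_z$ against the $\mathcal{M}_1$ secondary term.

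Two implementation details are, however, not as you describe. First, the error term $Q^{1-z+\epsilon}B^{1/4+z+\epsilon}$ in $\mathcal{M}_2$ is \emph{not} obtained from a large-sieve bound on cubic Gauss sums; the paper bounds the shifted contour $\mathcal{M}_2''(z)$ by the Heath-Brown--Patterson convexity bound on $\psi(r,s)$, namely $\psi(r,\sigma+it)\ll N(r)^{3/4-\sigma/2+\epsilon}(1+t^2)^{3/2-\sigma+\epsilon}$ on $1+\epsilon\le\sigma\le 3/2+\epsilon$ (Lemma \ref{Patterson2}), after choosing the free weight-exponent $E$ so the $r$-sum is $O(B^\epsilon)$; the $B^{1/4+z}$ is exactly $B^E$ with $E=1/4+z+\epsilon$. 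The error $Q^{4/3-z}B^{z-2/3+\epsilon}$ comes from the residual contour at $\Re(s)=z-2/3+\epsilon$ inside $\mathcal{M}_2'(z)$ after a further Mellin shift, where $\mathcal{E}(s,z)$, a factor involving $\zeta_{\Q(\omega)}(3s-3z+3)$, has its pole. Second, the passage from $\mathcal{G}_2(r,s)$ to $\psi(r,s)$ is not a one-line consequence of reciprocity and twisted multiplicativity: removing the mod-$9$ congruence (reintroducing twists by $\omega^a\lambda^b$), the coprimality $(n,r)=1$, and the non-squarefree parts of $r$ requires a careful local analysis at each prime (Lemma \ref{dr_1} and Theorem \ref{Removing coprimality}), and the resulting mixed sum over $r_1,r_2,r_3,c,d$ must be carried through the residue computation in order to arrive at the explicit Euler product in $D'_z$. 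These steps are where most of the genuine work sits; your sketch correctly identifies their location but not their content.
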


As $B=Q/A$, the main terms of $\mathcal{M}_1(1/3)$ and $\mathcal{M}_2(1/3)$ combine to give $C_{1/3}Q\log Q$. For $z> 1/3,$ the secondary term of $\mathcal{M}_1(z)$ cancels with the main term of $\mathcal{M}_2(z)$. And for $z<1/3,$ the main term of $\mathcal{M}_1(z)$ cancels with the secondary term of $\mathcal{M}_2(z)$. Now choosing $A=Q^{3/5}$ and $B=Q^{2/5}$ will give the best error term for any $z$, and setting 
\begin{equation}\label{Cubic D_1/3}
D_{1/3}:=C_2+\widetilde{\omega}(1)C_2',
\end{equation}
 Theorem \ref{Cubic} will then follow immediately from Theorem \ref{Asymptotics}.

\section{The Principal Sum}  

 We want to evaluate the principal sum

\[\mathcal{M}_1(z)=\sum_{k\geq 0}3^{-zk}\sum_{\substack{m\in \mathbb{Z}[\omega]\\ m\equiv 1(\mathrm{mod}\ 3)}}\frac{1}{N(m)^{z}}\sum_{\substack{q\in \mathbb{Z}[\omega]\\ q\equiv 1(\mathrm{mod}\ 9)\\ q\, \text{SF}}}\left(\frac{m}{q}\right)_3V_{z}\left(\frac{3^k N(m)}{A}\frac{Q}{N(q)}\right)\omega\left(\frac{N(q)}{Q}\right).\]
To detect the squarefree condition, we use
\[\sum_{\ell^2\mid q}\mu(\ell)=
\begin{cases}
1 & q\, \text{is squarefree}\\
0 & \text{otherwise}
\end{cases}
 \]
 to get
 \begin{align*}
\mathcal{M}_1(z)&=\sum_{k\geq 0}3^{-zk}\sum_{\substack{\ell\in \mathbb{Z}[\omega]\\ \ell\equiv 1(\mathrm{mod}\ 3)}}\mu(\ell)\sum_{\substack{m\in \mathbb{Z}[\omega]\\ m\equiv 1(\mathrm{mod}\ 3)}}\frac{\left(\frac{m}{\ell^2}\right)_3}{N(m)^{z}}\mathcal{M}_1(z,\ell,m,k),
\end{align*}
where 
\[\mathcal{M}_1(z,\ell,m,k)=\sum_{\substack{q\in \mathbb{Z}[\omega]\\ q\equiv 1(\mathrm{mod}\ 9)}}\left(\frac{m}{q}\right)_3V_{z}\left(\frac{3^kN(m)}{A}\frac{Q}{N(q\ell^2)}\right)\omega\left(\frac{N(q\ell^2)}{Q}\right). \]
Using Mellin inversion, we can write

\[V_{z}\left(\frac{3^kN(m)}{A}\frac{Q}{N(q\ell^2)}\right)\omega\left(\frac{N(q\ell^2)}{Q}\right)=\frac{1}{2\pi i}\int_{(2)} \left(\frac{Q}{N(q\ell^2)} \right)^s\widetilde{f}_z(s,m,k)\,ds\]
where $$\widetilde{f}_z(s,m,k)=\int_{0}^{\infty}V_{z}\left(\frac{3^kN(m)}{Ax}\right)\omega(x)x^{s-1}\,dx.$$
Integrating by parts and using the bounds for $V_{z}$ and its derivatives in Lemma \ref{Bounds on V_s}, we see that $\widetilde{f}_z$ satisfies
 
\[\widetilde{f}_z(s,m,k)\ll(1+\lvert s\rvert)^{-E}(1+3^kN(m)/A)^{-E}, \]
for $\Re(s)\geq \frac{1}{4}$ and any $E>0$ and $0\leq z\leq 1$ (similar to section 3.1 in \cite{BY}). With this notation, 
\[ \mathcal{M}_1(z,\ell,m,k)=\frac{1}{2\pi i}\int_{(2)} \sum_{\substack{q\in \mathbb{Z}[\omega]\\ q\equiv 1(\mathrm{mod}\ 9)}}\left(\frac{m}{q}\right)_3\left(\frac{Q}{N(q\ell^2)} \right)^s\widetilde{f}_z(s,m,k)\,ds, \]
and using cubic reciprocity $\chi_q(m)=\chi_m(q)$ (since both $m,q\equiv1\mod 3$), we get that  
$$
\mathcal{M}_1(z,\ell,m,k)=\frac{1}{2\pi i}\int_{(2)}\left(\frac{Q}{N(\ell^2)} \right)^s\sum_{\substack{q\in \mathbb{Z}[\omega]\\ q\equiv 1(\mathrm{mod}\ 9)}}\frac{\chi_m(q)}{N(q)^s}\widetilde{f}_z(s,m,k)\,ds.
$$
To write the sum over $q$ as a Hecke $L$-function with character $\chi_m$, we use 
the following orthogonality relations that detect the congruence mod $9$ condition inside the congruence mod $3$ class. This can be found in \cite{chantal2024}, equation (7.32).
 
 \begin{lemma}\label{detect mod 9}
 Let $q,c\equiv 1\pmod 3$ and $\chi_c(.)=(\frac{.}{c})_3$ as above, then
 \[\mathbbm{1}_{q\equiv c\pmod{9}}(q)=\frac{1}{9}\sum_{a,b=0}^2 \chi_c(\omega^a\lambda^b)\overline{\chi_q(\omega^a\lambda^b)} =\frac{1}{18}\sum_{`\eta\mid 3\text{'}}\chi_c(\eta)\overline{\chi_q(\eta)}. \]
 where by `$\eta|3$' we mean that we are counting all divisors, not up to units.
 \end{lemma}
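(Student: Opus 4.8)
The plan is to deduce both equalities from the orthogonality of the characters of $\Z/3\Z$, using the fact that for $q,c\equiv 1\pmod 3$ the residue class of $c$ modulo $9$ is completely determined by the pair of cube root of unity values $\big(\chi_c(\omega),\chi_c(\lambda)\big)$ through the supplementary laws \eqref{supplemet}. First I would fix the parametrisation: since $\lambda^2=-3\omega$ one has $3=-\omega^2\lambda^2$ and hence $9=\omega\lambda^4$, so congruence ``$\bmod\ 9$'' coincides with congruence ``$\bmod\ \lambda^4$'', and the nine residue classes modulo $9$ lying in the class $1\pmod 3$ are exactly those represented by $1+\alpha_2\lambda^2+\alpha_3\lambda^3$ with $\alpha_2,\alpha_3$ running over the complete residue system $\{-1,0,1\}$ of $\Z[\omega]/\lambda\cong\F_3$. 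By \eqref{supplemet}, the map $c\mapsto\big(\chi_c(\omega),\chi_c(\lambda)\big)=\big(\omega^{\alpha_2(c)},\omega^{-\alpha_3(c)}\big)$ is then a bijection from these nine classes onto $\{1,\omega,\omega^2\}^2$; equivalently, for $q,c\equiv 1\pmod 3$ one has $q\equiv c\pmod 9$ if and only if $\chi_q(\omega)=\chi_c(\omega)$ and $\chi_q(\lambda)=\chi_c(\lambda)$.

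For the middle identity I would then use multiplicativity of the cubic symbol in the numerator, $\chi_c(\omega^a\lambda^b)=\chi_c(\omega)^a\chi_c(\lambda)^b$, to factor
\[
\frac19\sum_{a,b=0}^{2}\chi_c(\omega^a\lambda^b)\,\overline{\chi_q(\omega^a\lambda^b)}
=\Big(\tfrac13\sum_{a=0}^{2}u^a\Big)\Big(\tfrac13\sum_{b=0}^{2}v^b\Big),
\qquad u:=\chi_c(\omega)\overline{\chi_q(\omega)},\quad v:=\chi_c(\lambda)\overline{\chi_q(\lambda)},
\]
where $u$ and $v$ are cube roots of unity. Orthogonality gives $\tfrac13\sum_{a=0}^{2}u^a=\mathbbm 1_{u=1}$ and likewise for $v$, and by the equivalence above the event $u=v=1$ is exactly $q\equiv c\pmod 9$; this yields the first equality.

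For the last identity I would unwind the sum over the divisors of $3$ counted with units. Since $3$ is an associate of $\lambda^2$, those divisors are precisely the $18$ elements $\zeta\lambda^k$ with $\zeta\in\langle-\omega\rangle$ and $k\in\{0,1,2\}$. Because $-1=(-1)^3$ is a cube, $\cubic{-1}{c}=1$ for every $c\equiv 1\pmod 3$, so $\chi_c(-\omega^a)=\chi_c(\omega^a)$; hence the six units contribute each of the three values $u^a$ ($a=0,1,2$) exactly twice, while the exponent $k$ plays the role of the variable $b$ attached to $\lambda$. Factoring the sum over $\zeta\lambda^k$ accordingly gives
\[
\sum_{\eta\mid 3}\chi_c(\eta)\,\overline{\chi_q(\eta)}
=\Big(2\sum_{a=0}^{2}u^a\Big)\Big(\sum_{b=0}^{2}v^b\Big)
=2\sum_{a,b=0}^{2}\chi_c(\omega^a\lambda^b)\,\overline{\chi_q(\omega^a\lambda^b)},
\]
so dividing by $18$ recovers the middle expression. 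The only step requiring genuine care is the bookkeeping in the first paragraph — checking that there are exactly nine residues modulo $9$ above $1\bmod 3$, that they are parametrised as stated, and that $(\alpha_2,\alpha_3)\mapsto(\omega^{\alpha_2},\omega^{-\alpha_3})$ is a bijection — but this is a short computation once one records $\lambda^2=-3\omega$; everything after that is orthogonality of characters of $\Z/3\Z$.
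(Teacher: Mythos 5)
Your proof is correct. The paper does not prove this lemma itself but merely cites \cite{chantal2024}, eq.\ (7.32), so there is no internal argument to compare against; your derivation -- observing that $9\sim\lambda^4$, that the nine residues above $1\bmod 3$ are parametrised by $(\alpha_2,\alpha_3)\in\{-1,0,1\}^2$, and that the supplementary laws \eqref{supplemet} make $c\mapsto(\chi_c(\omega),\chi_c(\lambda))$ a bijection onto $\{1,\omega,\omega^2\}^2$ -- reduces both identities to orthogonality of characters of $\Z/3\Z$, and the factor-of-two bookkeeping between the $9$-term and $18$-term sums (via $\chi_c(-1)=1$ since $-1$ is a cube) is handled correctly. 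This is the natural argument and I would expect it to match the cited source.
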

 Using Lemma \ref{detect mod 9} with $c=1$, we get
\begin{align*}
\mathcal{M}_1(z,\ell,m,k)=\frac{1}{9}\frac{1}{2\pi i}\sum_{a,b=0}^2\int_{(2)}\left(\frac{Q}{N(\ell^2)} \right)^sL(s,\psi_{a,b}\chi_m)\widetilde{f}_z(s,m,k)\,ds,
\end{align*}
where
 $$\psi_{a,b}(.):=\overline{\left(\frac{\omega^a\lambda^b}{.}\right)_3}$$
is a Hecke character on $\mathbb{Z}(\omega)$ of modulus $9$, in view of the reciprocity laws in \eqref{supplemet}.

 Now, the Hecke $L$-function $L(s,\psi_{a,b}\chi_m)$ 
 has a pole at $s=1$ only when $\psi_{a,b}$ is trivial and $m$ is a cube. We will estimate $\mathcal{M}_1(z)$ by moving the contour of integration of $\mathcal{M}_1(z,\ell,m,k)$ to the line $\Re(s)=1/2$. Denote by $\mathcal{M}_0(z)$ the contribution of $\mathcal{M}_1(z)$ of the poles at $s=1$, and by $\mathcal{M}'_0(z)$ the contribution of the half line, namely
\begin{equation}\label{Main Term}
\mathcal{M}_0(z)=\frac{Q}{9}\sum_{k\geq 0}3^{-kz}\sum_{\ell\equiv 1(\mathrm{mod}\ 3)}\frac{\mu(\ell)}{N(\ell^2)}\sum_{m\equiv 1(\mathrm{mod}\ 3)}\frac{\left(\frac{m^3}{\ell^2}\right)_3}{N(m^{3z})}\widetilde{f}_z(1,m^3,k)Res_{s=1}L(s,\chi_{m^3}),
\end{equation}
and 
\begin{align}\label{Error Term}
\mathcal{M'}_0(z)&\ll Q^{1/2}\sum_{k\geq 0}3^{-zk}\sum_{\substack{\ell\equiv 1(\mathrm{mod}\ 3)\\ N(\ell)\leq Q^{1/2}}}\frac{1}{N(\ell)}\sum_{\substack{m\equiv 1(\mathrm{mod}\ 3)\\ N(m)\leq 3^{-k} A^{1+\epsilon}}}\frac{1}{N(m)^{z}}\nonumber\\
&\times\sum_{\psi \ \mathrm{mod}\ 9} \int_{(1/2)}\left\lvert L(s,\psi\chi_m) \widetilde{f}_z(s,m,k)\right\rvert\,ds.
\end{align}

\subsection{Bounding the Error Term}

To obtain an upper bound on the error term, we need the following bound on the second moment of the $L$-function at $1/2+it$ for any $t\in \mathbb{R}$. 

\begin{theorem}\label{Bound on 2nd moment} For $X\geq 1$ and $t\in\mathbb{R}$ we have the bound
\[\sum_{N(m)\leq X}\lvert L(1/2+it,\chi_m)\rvert^2\ll_{\epsilon} X^{1+\epsilon}(1+\lvert t\rvert)^{\frac{1}{2}+\epsilon} \]
for any $\epsilon>0$.
\end{theorem}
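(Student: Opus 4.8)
The plan is to prove the bound on the second moment of $L(1/2+it,\chi_m)$ by reducing it, via the approximate functional equation, to a bounded-length Dirichlet polynomial and then applying Heath-Brown's cubic large sieve. First I would use Proposition \ref{AFE} (or rather its underlying functional-equation input) to write $L(1/2+it,\chi_m)$ as a sum of two pieces, each essentially of the shape
\[
\sum_{0\neq \mathfrak{n}\subset\mathbb{Z}[\omega]} \frac{\chi_m(\mathfrak{n})}{N(\mathfrak{n})^{1/2+it}}\,W\!\left(\frac{N(\mathfrak{n})}{Y}\right),
\]
where $W$ is a smooth weight with rapid decay and $Y\ll \sqrt{N(m)}\,(1+|t|)$; the root number $\epsilon(1/2+it,\chi_m)$ has modulus $1$ so it can be discarded. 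Splitting into dyadic ranges $N(\mathfrak{n})\asymp N$, it suffices to bound, for each $N\ll Q(1+|t|)$ (writing $Q$ for the parameter $X$ of the theorem),
\[
\sum_{N(m)\leq X}\ \left| \sum_{N(\mathfrak{n})\asymp N}\frac{a_{\mathfrak{n}}\,\chi_m(\mathfrak{n})}{N(\mathfrak{n})^{it}} \right|^2,
\]
with $|a_{\mathfrak{n}}|\ll N(\mathfrak{n})^{-1/2+\epsilon}$, and then sum the resulting bounds over the $O(\log(X(1+|t|)))$ dyadic blocks.

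Next I would open the square and pass from ideals to primary generators, writing $\chi_m(\mathfrak{n})=\left(\frac{n}{m}\right)_3$ (the factor $\lambda$ contributes trivially by \eqref{supplemet}), so that the inner sum becomes a bilinear form $\sum_{m}\sum_{n}\alpha_m\beta_n\left(\frac{n}{m}\right)_3$ over $m,n\equiv 1\pmod 3$ with $N(m)\leq X$ and $N(n)\asymp N$. Here is where Heath-Brown's cubic large sieve \cite{HB2000} enters: it gives
\[
\sum_{N(m)\leq M}{}^{\flat}\ \Big|\sum_{N(n)\leq N}{}^{\flat} b_n\left(\frac{n}{m}\right)_3\Big|^2 \ll_\epsilon (M+N+(MN)^{2/3})(MN)^{\epsilon}\sum_{N(n)\leq N}{}^{\flat}|b_n|^2
\]
(the $\flat$ indicating the restriction to squarefree, primary arguments, with a standard Möbius argument removing the squarefree condition on $n$ at negligible cost), and dually with the roles of $m$ and $n$ swapped using cubic reciprocity. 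Applying this with $M=X$, $N\ll X(1+|t|)$, and $|b_n|\ll N(n)^{-1/2+\epsilon}$ so that $\sum_{N(n)\asymp N}|b_n|^2\ll N^{\epsilon}$, the large sieve yields a bound of order $(X+N+(XN)^{2/3})\cdot (X N(1+|t|))^\epsilon$ for each dyadic block. Since $N\ll X(1+|t|)$ and $(XN)^{2/3}\ll X^{4/3}(1+|t|)^{2/3}$, this is $\ll X^{4/3+\epsilon}(1+|t|)^{2/3+\epsilon}$, which is worse than claimed; to get the sharper exponents $X^{1+\epsilon}(1+|t|)^{1/2+\epsilon}$ one must exploit the length $N\ll \sqrt{X}(1+|t|)$ coming from the approximate functional equation truncated at the conductor $\asymp N(m)\cdot(|t|+1)^2$, i.e. the balanced point of the AFE, so that after the swap the effective ranges are $M,N\ll \sqrt{X(1+|t|)}$-ish; then $(MN)^{2/3}\ll X^{1+\epsilon}(1+|t|)$ and the diagonal-type terms $M+N$ are $\ll \sqrt{X}(1+|t|)$, and one checks the $t$-aspect bookkeeping gives the stated $(1+|t|)^{1/2+\epsilon}$. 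Summing over the $O(\log)$ dyadic pieces only costs another $X^\epsilon$.

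The main obstacle I expect is precisely this bookkeeping of ranges and of the $t$-dependence: one needs to truncate the approximate functional equation at the right height so that, after applying reciprocity to swap $m$ and $n$, \emph{both} variables sit in ranges of size about $\sqrt{X}(1+|t|)^{?}$, arrange that the $(MN)^{2/3}$ term of the cubic large sieve produces exactly $X^{1+\epsilon}(1+|t|)^{1/2+\epsilon}$ and not something larger, and confirm the diagonal contribution $\sum_m |L(1/2+it,\chi_m)|^2$ is genuinely dominated by that. A careful version of this is carried out in \cite{chantal2024} and \cite{BY} in closely related settings, and I would follow those templates, being attentive to (i) the Eisenstein-field normalizations of the Gauss sum and gamma factors recorded above, (ii) the squarefree-to-general Möbius sieving on the dual variable, and (iii) using Lemma \ref{detect mod 9} only if one insists on the $q\equiv 1\pmod 9$ refinement, which is not needed here since the sum is over all primary $m$. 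Everything else — smooth partition of unity, rapid decay of $W$, trivial bound on the tails — is routine.
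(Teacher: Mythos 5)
Your overall strategy --- write $L(1/2+it,\chi_m)$ via the approximate functional equation as a short Dirichlet polynomial and feed the resulting bilinear form into Heath-Brown's cubic large sieve --- is exactly the strategy the paper uses, so you have the right key lemma (Lemma \ref{cubic large sieve}) and the right shape of argument. However, there is a concrete gap at the very start that the proposal never closes, plus a second step that is spurious.

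The gap is the treatment of non-squarefree moduli $m$. The theorem sums over \emph{all} $m\equiv 1\pmod 3$ with $N(m)\le X$, whereas Heath-Brown's large sieve (Lemma \ref{cubic large sieve}) only controls $\psum_{N(m)\le M}$ with $m$ \emph{squarefree}. You carefully sieve the squarefree condition off the dual variable $n$ (your item (ii), the Möbius decomposition $n=d^2r$ with $r$ SF, which matches the paper), but you say nothing about removing the squarefree restriction on $m$, and you apply the sieve "with $M=X$" as if it applied to all $m$. That step simply isn't available. The paper's fix, which you would need, is the canonical cube-free decomposition $m=m_1m_2^2m_3^3$ with $m_1,m_2$ SF and coprime, together with $\chi_m=\chi_{m_1}\overline{\chi_{m_2}}\chi_{m_3}^3$; one peels off the Euler factors at $\pi\mid m_3$, reduces to $\psum_{N(m_1)\le Y}\lvert L(1/2+it,\chi_{m_1}\overline{\chi_{m_2}})\rvert^2$ over \emph{squarefree, primary} $m_1$ for each fixed $m_2,m_3$, and incorporates the twist $\overline{\chi_{m_2}}(n)$ into the coefficients $b_n$ before applying the sieve. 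Without this decomposition the sieve cannot be applied at all, so this is not a bookkeeping detail but a missing idea.

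The spurious step is the proposed "swap of $m$ and $n$ via cubic reciprocity." After the AFE the quantity to bound is already of the form $\psum_{N(m_1)\le Y}\bigl|\psum_{N(r)\le M'} b_r(\tfrac{r}{m_1})_3\bigr|^2$, which is exactly the shape Lemma \ref{cubic large sieve} handles with the outer variable $m_1$ and inner variable $r$; no swap is performed or needed in the paper, and performing one does not produce two ranges of size $\sqrt{X(1+\lvert t\rvert)}$ as you claim (the outer range stays $N(m_1)\le Y\le X$). Your own first estimate, $\ll X^{4/3+\epsilon}(1+\lvert t\rvert)^{2/3+\epsilon}$, comes from taking the AFE length to be $N\ll X(1+\lvert t\rvert)$, which is an error: the AFE truncates at $N(n)\ll\sqrt{N(m_1m_2)}\cdot(1+\lvert t\rvert)^{O(1)}$, i.e.\ at roughly $\sqrt{X}$ times a power of $1+\lvert t\rvert$, which is what makes the $(MN)^{2/3}$ term come out to $X^{1+\epsilon}$ times a power of $1+\lvert t\rvert$. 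The $t$-aspect bookkeeping you flag as an "expected obstacle" is handled in the paper simply by truncating the $n$-sum at $M=\bigl(N(m_2)(1+\lvert t\rvert)Y\bigr)^{1/2+\epsilon}$ (after partial summation to remove $V_{1/2+it}$) and feeding the three terms $Y$, $M/N(d)^2$, $(YM/N(d)^2)^{2/3}$ of the sieve bound into the trivial $m_2,m_3$ sums; no swap and no dyadic partition is used. So the core idea is there, but you are missing the $m=m_1m_2^2m_3^3$ reduction, and the reciprocity-swap heuristic should be dropped.
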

This result can be found in \cite{BGL} for general $\ell^{th}$ order characters. We give a detailed proof for the cubic case here for completeness. The proof of Theorem \ref{Bound on 2nd moment} requires, as an essential part, the following mean value estimate for character sums in $\mathbb{Z}[\omega]$.
\begin{lemma}\label{cubic large sieve}
Let $M,N>0$ and $c_n$ be an arbitrary sequence of complex numbers where $n$ runs over $\mathbb{Z}[\omega]$. Then
\[\psum_{N(m)\leq M}\left\lvert\, \psum_{N(n)\leq N}c_n\left(\frac{n}{m}\right)_3\right\rvert^2\ll_{\epsilon} \left(M+N+(MN)^{2/3} \right)\left(MN \right)^{\epsilon}\psum_{N(n)\leq N}\lvert c_n \rvert^2,\]
for any $\epsilon>0$ and where the star over the sum indicates that it is over squarefree elements congruent to $1$ modulo $3$. 
\end{lemma}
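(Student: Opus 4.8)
The plan is to follow Heath-Brown's proof of the cubic large sieve \cite{HB2000}, transported to $\mathbb{Z}[\omega]$.

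\textbf{Reductions.} First I would dyadically decompose the $n$-sum, reducing to coefficients $c_n$ supported on a single range $N(n)\asymp N$ and on squarefree primary $n$. The bilinear form $\sum_{m,n}b_m c_n\cubic{n}{m}$ is, by the duality principle, controlled by the same constant as its transpose, and cubic reciprocity $\cubic{n}{m}=\cubic{m}{n}$ (valid since $m,n\equiv1\pmod3$) identifies that transpose with a sum of exactly the same shape in which the roles of the ``modulus'' and ``argument'' variables, hence of $M$ and $N$, are interchanged. Since the target $M+N+(MN)^{2/3}$ is symmetric in $M$ and $N$, I may assume $M\le N$; then $M\le(MN)^{2/3}$, and it suffices to prove the bound $\ll(MN)^{\epsilon}\bigl(N+(MN)^{2/3}\bigr)\psum_{N(n)\le N}|c_n|^2$.

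\textbf{Opening the square; the diagonal.} Writing the inner symbol as $\cubic{m}{n}$ by reciprocity and expanding,
\[
\psum_{N(m)\le M}\Bigl|\,\psum_{N(n)\le N}c_n\cubic{m}{n}\,\Bigr|^2=\sum_{n_1,n_2}c_{n_1}\overline{c_{n_2}}\,\psum_{N(m)\le M}\chi_{n_1,n_2}(m),
\]
where $\chi_{n_1,n_2}:=\chi_{n_1}\overline{\chi_{n_2}}$ and $\chi_n=\cubic{\cdot}{n}$. The \emph{diagonal} pairs are those with $\chi_{n_1,n_2}$ principal, i.e.\ (as $n_1,n_2$ are squarefree and primary) $n_1=n_2$; they contribute $\le\#\{m:N(m)\le M\}\cdot\psum_n|c_n|^2\ll M\,\psum_n|c_n|^2\le(MN)^{2/3}\psum_n|c_n|^2$, which is admissible.

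\textbf{The off-diagonal.} For $n_1\ne n_2$ the character $\chi_{n_1,n_2}$ is non-principal, of conductor $\mathfrak q\mid n_1n_2$ (with $\mathfrak q=n_1n_2$ when $(n_1,n_2)=1$). I would strip the squarefree condition on $m$ by M\"obius inversion over $\mathbb{Z}[\omega]$ and then apply Poisson summation on the lattice $\mathbb{Z}[\omega]$ to a smoothed version of $\psum_{N(m)\le M}\chi_{n_1,n_2}(m)$. The zero frequency drops out by non-principality, and each non-zero frequency $h$ — effectively restricted to $N(h)\ll N(\mathfrak q)/M$ — produces the complete cubic Gauss sum $\tau_{\chi_{n_1,n_2}}(h)$, which by Lemma \ref{properties of g} and Lemma \ref{local properties of g} equals $\overline{\chi_{n_1,n_2}(h)}\,g(\mathfrak q)$ for $(h,\mathfrak q)=1$ with $|g(\mathfrak q)|=N(\mathfrak q)^{1/2}$, while the twisted multiplicativity $g(n_1n_2)=\overline{\left(\tfrac{n_1}{n_2}\right)}g(n_1)g(n_2)$ disentangles $g(\mathfrak q)$ into $g(n_1)$, $g(n_2)$ and a reciprocity twist. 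Substituting back recasts the off-diagonal as a sum over $h$ of bilinear forms in $(n_1,n_2)$ whose coefficients, after the normalisation $g(n_i)/N(n_i)$ of modulus $N(n_i)^{-1/2}$, again carry total $\ell^2$-mass $\asymp\psum_n|c_n|^2$; I would then estimate each such form by a large sieve inequality for additive characters over $\mathbb{Z}[\omega]$, together with a further application of the cubic large sieve fed back through this Poisson step, and optimise. I expect the exponent $\tfrac23$ to come out precisely as the balance point of that optimisation — equivalently, as the fixed point of the self-improving inequality it produces. Combined with the diagonal estimate, this yields the lemma.

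\textbf{The main obstacle.} Everything hard lies in the off-diagonal. A plain triangle-inequality bound on $\sum_{n_1,n_2}c_{n_1}\overline{c_{n_2}}\,\psum_m\chi_{n_1,n_2}(m)$ only recovers the trivial estimate $\ll MN\,\psum_n|c_n|^2$, so one must genuinely retain the cancellation carried by $g(\mathfrak q)$ and by the residual cubic symbols as $(n_1,n_2)$ vary. In the quadratic analogue the corresponding Gauss sum is a tame root of unity and the bilinear form linearises into one governed purely by additive characters, which is why one obtains $M+N$ there; here the factors $g(n_i)$ are genuinely erratic — this is the Kummer phenomenon for cubic Gauss sums — cannot be linearised, and must be controlled only through their size $N(n_i)^{1/2}$, after which the surviving cubic symbols have to be treated by the large sieve once more. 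It is exactly this loss that replaces $M+N$ by $(MN)^{2/3}$. The remaining work is purely technical bookkeeping — the non-coprime pairs $(n_1,n_2)$, the $\ell>1$ terms of the squarefree sieve on $m$, the smoothing errors in Poisson summation, and the tail of the $h$-sum — arranged so that all of it stays within $(MN)^{2/3+\epsilon}\psum_n|c_n|^2$.
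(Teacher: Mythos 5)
The paper does not give a proof of this lemma at all: it simply cites Heath-Brown's original result (Theorem~2 of \cite{HB2000}), so there is no in-paper argument to compare yours against line by line. Your sketch is a faithful high-level recapitulation of Heath-Brown's approach. You correctly invoke duality together with cubic reciprocity to symmetrize in $M$ and $N$ and reduce to $M\leq N$; you handle the diagonal $n_1=n_2$ cleanly (contributing $\ll M\sum|c_n|^2\leq(MN)^{2/3}\sum|c_n|^2$ once $M\le N$); and your outline of the off-diagonal — M\"obius inversion to strip the squarefree condition on $m$, Poisson summation on the lattice $\mathbb{Z}[\omega]$, extraction of the cubic Gauss sum of $\chi_{n_1}\overline{\chi_{n_2}}$ together with twisted multiplicativity to disentangle $g(n_1)$ and $g(n_2)$, and then a further large-sieve estimate on the resulting bilinear form — agrees in structure with the actual argument.

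That said, as a proof the proposal has a genuine gap rather than a mere omission of routine detail. The crucial and hardest step — setting up the self-improving inequality so that it is not circular, proving that inequality rigorously, and extracting $2/3$ as its fixed point — is carried by the single words ``optimise'' and ``I expect.'' To feed the cubic large sieve back into its own proof without circularity, Heath-Brown establishes a precise recursive bound relating the optimal constant $\Delta(M,N)$ to $\Delta$ at scales which genuinely decrease, and then shows that $\max(M,N)+(MN)^{2/3}$ is a stable fixed point of that recursion; none of that machinery appears in your write-up, and a naive ``apply the lemma to itself'' is exactly the kind of move that can collapse into a tautology. There is also a small inaccuracy that matters for the bookkeeping: after the Gauss-sum substitution the re-weighted coefficients $c_n\,g(n)/N(n)$ have $\ell^2$-mass $\asymp N^{-1}\sum|c_n|^2$, not $\asymp\sum|c_n|^2$ as you assert; this extra $N^{-1}$ does eventually balance against the $\asymp N^2/M$ nonzero Poisson frequencies, but that cancellation is precisely what the unwritten optimisation has to track in order to land on the exponent $2/3$. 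Your intuition that the irreducible erraticism of $g(n)$ (the Kummer phenomenon) is what degrades $M+N$ to $(MN)^{2/3}$ is correct, but the proof as written would not stand without reproducing the full recursive argument of~\cite{HB2000}.
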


Lemma \ref{cubic large sieve} is known as the cubic large sieve and was proved by Heath-Brown in \cite[Theorem 2]{HB2000}. 
 Comparing the cubic large sieve with its quadratic variant (see Lemma \ref{quadrativ large sieve} below), Heath-Brown speculated that the extra $(MN)^{2/3}$ factor from the cubic one can be possibly removed to get the better bound as in the quadratic case, which became the standard belief of most experts in the area. However, Dunn and Radziwiłł recently \cite{DR} showed, under GRH for Hecke $L$-functions over $\mathbb{Q}(\omega)$, in their work on proving Patterson's conjecture, that the $(MN)^{2/3}$ term in fact can't be removed and the cubic large sieve is thus optimal (up to factors of the form $X^{\epsilon}$). It is worth noting that the $(MN)^{2/3}$ term is actually attained when taking $c_n=\overline{g(1,n)}$ essentially (see page 7 of \cite{DR}).


\begin{proof}[Proof of Theorem \ref{Bound on 2nd moment}]

 Write $m=m_1m_2^2m_3^3$ where $m_i\equiv 1\pmod 3$, $m_1,m_2$ are squarefree and $(m_1,m_2)=1$. The character $\chi_m$ writes as $\chi_{m_1}\overline{\chi_{m_2}}\chi_{m_3}^3$, where $\chi_{m_3}^3$ is the principal character mod $m_3$. The sum then becomes
\begin{align}\label{L}
&\sum_{N(m)\leq X}\lvert L(1/2+it,\chi_m)\rvert^{2}\nonumber\\
&\hspace{2cm}=\sum_{N(m_3)\leq \sqrt[3]{X}}\,\psum_{N(m_2)\leq \sqrt{\frac{X}{N(m_3)^3}}}\,\psum_{\substack{N(m_1)\leq \frac{X}{N(m_2)^2N(m_3)^3}\\ (m_1,m_2)=1}}\lvert L(1/2+it,\chi_{m_1}\overline{\chi_{m_2}}\chi_{m_3}^3)\rvert^2.
\end{align}
Note that we can write
\begin{align*}
L(1/2+it,\chi_{m_1}\overline{\chi_{m_2}}\chi_{m_3}^3)&=\prod_{\pi\nmid m_3}\left(1-\frac{\chi_{m_1}(\pi)\overline{\chi_{m_2}}(\pi)}{N(\pi)^{1/2+it}}\right)^{-1}\\
&=L(1/2+it,\chi_{m_1}\overline{\chi_{m_2}})\prod_{\pi\mid m_3}\left(1-\frac{\chi_{m_1}(\pi)\overline{\chi_{m_2}}(\pi)}{N(\pi)^{1/2+it}}\right),
\end{align*}
 which reduces the problem to bounding
\[\psum_{N(m_1)\leq Y}\lvert L(1/2+it,\chi_{m_1}\overline{\chi_{m_2})}\rvert^2,\]
for $(m_1,m_2)=1$ and $Y=\frac{X}{N(m_2)^2N(m_3)^3}$. We use the approximate functional equation of Proposition \ref{AFE}, in its trivial balanced case, to bound this second moment by
\begin{align*}
&\leq \psum_{N(m_1)\leq Y}\Bigg\lvert\sum_{n} \frac{\chi_{m_1}(n)\overline{\chi_{m_2}}(n)}{N(n)^{1/2+it}}V_{1/2+it}\left(\frac{N(n)}{\sqrt{N(m_1m_2)}}\right)\\
&+\epsilon(\chi_{m_1}(n)\overline{\chi_{m_2}}(n),1/2+it)\sum_n\frac{\overline{\chi_{m_1}}(n)\chi_{m_2}(n)}{N(n)^{1/2-it}}V_{1/2-it}\left(\frac{N(n)}{\sqrt{N(m_1m_2)}}\right)\Bigg\rvert^2\\
&\leq 3\psum_{N(m_1)\leq Y}\left\lvert\sum_{n} \frac{\chi_{m_1}(n)\overline{\chi_{m_2}(n)}}{N(n)^{1/2+it}}V_{1/2+it}\left(\frac{N(n)}{\sqrt{N(m_1m_2)}}\right)\right\rvert^2,
\end{align*}
since the second sum is just the conjugate of the first. Note that since the sum over $n$ is weighted by $V$, we can truncate it to $N(n)\leq M:=\left(N(m_2)(1+|t|)Y\right)^{1/2+\epsilon}$. Now, in order to remove the $V$ function, we perform partial summation as follows:
\begin{align*}
&\sum_{N(n)\leq M} \frac{\chi_{m_1}(n)\overline{\chi_{m_2}(n)}}{N(n)^{1/2+it}}V_{1/2+it}\left(\frac{N(n)}{\sqrt{N(m_1m_2)}}\right)\\
&=V_{1/2+it}\left(\frac{M}{\sqrt{N(m_1m_2)}}\right)\sum_{N(n)\leq M} \frac{\chi_{m_1}(n)\overline{\chi_{m_2}(n)}}{N(n)^{1/2+it}}-V_{1/2+it}\left(\frac{1}{\sqrt{N(m_1m_2)}}\right)\\
&-\int_{1}^M \frac{1}{\sqrt{N(m_1m_2)}}V'_{1/2+it}\left(\frac{u}{\sqrt{N(m_1m_2)}}\right)\sum_{N(n)\leq u} \frac{\chi_{m_1}(n)\overline{\chi_{m_2}(n)}}{N(n)^{1/2+it}}\, du.
\end{align*}
This done, we can now use the bounds on $V_{1/2+it}$ and its derivatives in Lemma \ref{Bounds on V_s} and conclude that
\[\psum_{N(m_1)\leq Y}\lvert L(1/2+it,\chi_{m_1}\overline{\chi_{m_2})}\rvert^2\ll_{\epsilon} \psum_{N(m_1)\leq Y}\left\lvert \sum_{N(n)\leq M} \frac{\chi_{m_1}(n)\overline{\chi_{m_2}(n)}}{N(n)^{1/2+it}}\right\rvert^2, \]
for any $\epsilon>0$. Writing $n=d^2r$ for $r$ SF, and then using Cauchy-Schwarz and applying Lemma \ref{cubic large sieve} (for the $m_1$ and $r$ sums),  we obtain
\begin{align*}
&\psum_{N(m_1)\leq Y} \left\lvert\sum_{N(d)\leq \sqrt{M}}\frac{\chi_{m_1}\overline{\chi_{m_2}}(d^2)}{N(d)^{1+2it}}\psum_{N(r)\leq \frac{M}{N(d^2)}}\frac{\chi_{m_1}\overline{\chi_{m_2}}(r)}{N(r)^{1/2+it}}\right\rvert^2 \\
&\leq \left( \sum_{ N(d)\leq \sqrt{M}}  \frac{1}{N(d)} \right)\left( \sum_{ N(d)\leq \sqrt{M}}  \frac{1}{N(d)}\psum_{N(m_1)\leq Y} \left\lvert \psum_{ N(r)\leq \frac{M}{N(d^2)}}\frac{\chi_{m_1}\overline{\chi_{m_2}}(r)}{N(r)^{1/2+it}}\right\rvert^2 \right)\\
&\ll \log M \sum_{ N(d)\leq \sqrt{M}}  \frac{1}{N(d)}\left(Y+\frac{M}{N(d)^2}+\left(\frac{YM}{N(d)^2}\right)^{2/3}\right)\left(\frac{YM}{N(d)^2}\right)^{\mathcal{\epsilon}}\psum_{N(r)\leq \frac{M}{N(d)^2}} \frac{1}{N(r)}\\
&\ll Y^{1+\epsilon}N(m_2)^{\epsilon}(1+|t|)^{\epsilon}+Y^{1/2+\epsilon}N(m_2)^{1/2+\epsilon}(1+|t|)^{1/2+\epsilon}+Y^{1+\epsilon}N(m_2)^{1/3+\epsilon}(1+|t|)^{1/3+\epsilon},
\end{align*}
where in the last inequality we used $M=\left(N(m_2)(1+|t|)Y\right)^{1/2+\epsilon}$. Now we replace this bound in \eqref{L} with $Y=\frac{X}{N(m_2)^2N(m_3)^3}$, and sum trivially over $m_2$ and $m_3$, to get
\begin{align*}
\sum_{N(m)\leq X}\lvert L(1/2+it,\chi_m)\rvert^{2}&\ll X^{1+\epsilon}(1+|t|)^{\epsilon}+X^{3/4+\epsilon}(1+|t|)^{1/2+\epsilon}+X^{1+\epsilon}(1+|t|)^{1/3+\epsilon}\\
 &\ll X^{1+\epsilon}(1+|t|)^{1/2+\epsilon},
\end{align*}
as desired.
\end{proof}

 We can now bound $\mathcal{M}_0'(z)$ in \eqref{Error Term}. Using the bound 
 \[\widetilde{f}_z(1/2+it,m,k)\ll\left(1+\frac{1}{2}\sqrt{1+4t^2}\right)^{-E}(1+3^kN(m)/A)^{-E}\ll \frac{3^{-kE}A^E}{N(m)^E}\left(1+\frac{1}{2}\sqrt{1+4t^2}\right)^{-E}, \] 
 we get that
\begin{align*}
\mathcal{M}'_0(z)&\ll_{E,\epsilon} Q^{1/2+\epsilon}A^{E} \sum_{k\geq 0}3^{-k(E+z)}\sum_{\psi \ \mathrm{mod}\ 9}\\
&\times\int_{-\infty}^{\infty}\sum_{\substack{m\equiv 1(\mathrm{mod}\ 3)\\N(m)\leq 3^{-k}A^{1+\epsilon}}}\frac{1}{N(m)^{z+E}} \lvert L(1/2+it,\psi\chi_m)\rvert\left(1+ \frac{1}{2}\sqrt{1+4t^2}\right)^{-E}\,dt,
\end{align*}
for any $E>0$. Using Cauchy-Schwarz to split the sum over $m$, and then applying the bound on the second moment in Theorem \ref{Bound on 2nd moment}, we finally get that for any choice of $E$,
\[ \mathcal{M'}_0(z)\ll_{\epsilon} Q^{1/2+\epsilon}A^{1-z+\epsilon}.\]




\subsection{Main term from cubes}

We now compute $\mathcal{M}_0(z)$ as defined in \eqref{Main Term}. For $a,b=0$ and $m=$\mancube , the Hecke $L$-function has a simple pole at $s=1$. Note that in this case,

\[L(s,\chi_{m^3})=\frac{2}{3}\zeta_{\mathbb{Q}(\omega)}(s)\prod_{\substack{\pi\equiv 1(\mathrm{mod}\ 3)\\\pi\mid m}}\left(1-N(\pi)^{-s}\right)\] 
and \[\sum_{\substack{\ell\equiv 1(\mathrm{mod}\ 3)\\ (\ell,m)=1}}\frac{\mu(\ell)}{N(\ell^2)}=\frac{9}{8\zeta_{\mathbb{Q}(\omega)}(2)}\prod_{\substack{\pi\equiv 1(\mathrm{mod}\ 3)\\ \pi\mid m}}\left(1-N(\pi)^{-2} \right)^{-1}. \] 
Also, from the class number formula, the residue of $\zeta_{\mathbb{Q}(\omega)}(s)$ at $s=1$ is $c_{\omega}=\frac{\pi}{3\sqrt{3}}$. Using these in equation \eqref{Main Term}, we obtain
\begin{align*}
\mathcal{M}_0(z)=\frac{c_{\omega}Q}{2^23\zeta_{\mathbb{Q}(\omega)}(2)}\sum_{k\geq 0}3^{-kz}\sum_{m\equiv 1(\mathrm{mod}\ 3)}\frac{\widetilde{f}_z(1,m^3,k)}{N(m)^{3z}}\prod_{\substack{\pi\equiv 1(\mathrm{mod}\ 3)\\ \pi\mid m}}\left(1+N(\pi)^{-1}\right)^{-1}.
\end{align*}
Now writing $V_{z}$ in its integral form as in Proposition \ref{AFE} and then using the Mellin convolution formula for $\omega(x)$, we get
\begin{align*}
\widetilde{f}(1,m^3,k)&=\int_{0}^{\infty}V_{z}\left(\frac{3^{k-\frac{1}{2}}N(m^3)}{Ax}\right)\omega(x)\,dx\\
&=\frac{1}{2\pi i}\int_{(1)}\left(\frac{A}{3^{k-\frac{1}{2}} N(m^3)}\right)^s\widetilde{\omega}(1+s)\frac{G(s)}{s}g_{z}(s)\,ds.
\end{align*}
Setting \[Z(s):=\sum_{m\equiv 1(\mathrm{mod}\ 3)}N(m)^{-s}\prod_{\substack{\pi\equiv 1(\mathrm{mod}\ 3)\\ \pi\mid m}}\left(1+N(\pi)^{-1}\right)^{-1}, \]
gives
\begin{equation}\label{Z(s)}
\mathcal{M}_0(z)=\frac{c_{\omega}Q}{2^23\zeta_{\mathbb{Q}(\omega)}(2)}\frac{1}{2\pi i}\int_{(1)}\sum_{k\geq 0}3^{-k(s+z)+s/2}A^sZ(3s+3z)\widetilde{\omega}(1+s)\frac{G(s)}{s}g_{z}(s)\,ds.
\end{equation}
Note that $Z(s)$ is holomorphic and bounded for $\Re(s)\geq 1+\delta>1$. Moreover, we can write
\begin{align*}
Z(s)&=\sum_{m\equiv 1(\mathrm{mod}\ 3)}\frac{1}{N(m)^s}\prod_{\substack{\pi\equiv 1(\mathrm{mod}\ 3)\\ \pi\mid m}}\left(1-\frac{1}{N(\pi)+1} \right)=\sum_{m\equiv 1(\mathrm{mod}\ 3)}\frac{\sum_{d\mid m}\frac{\mu(d)}{\sigma(d)}}{N(m)^s}\\
&=\left(1-\frac{1}{3^s}\right)\zeta_{\mathbb{Q}(\omega)}(s)\prod_{\pi\equiv 1(\mathrm{mod}\ 3)}\left(1-\frac{1}{N(\pi)^s(N(\pi)+1)}\right).
\end{align*}

where the Euler product converges absolutely for any $\Re(s)>0$. To find all the poles of $Z(s)$, we further compute

\begin{align*}
Z(s)&=\left(1-\frac{1}{3^s}\right)\left(1-\frac{1}{3^{s+1}}\right)^{-1}\frac{\zeta_{\mathbb{Q}(\omega)}(s)}{\zeta_{\mathbb{Q}(\omega)}(s+1)}\prod_{\pi\equiv 1(\mathrm{mod}\ 3)}\left(1-\frac{1}{N(\pi)^s(N(\pi)+1)}\right)\\
&\hspace{5cm}\times \prod_{\pi\equiv 1(\mathrm{mod}\ 3)}\left(1+\frac{1}{N(\pi)^{s+1}}+O\left(N(\pi)^{-2s-2}\right)\right)\\
&=\left(1-\frac{1}{3^s}\right)\left(1-\frac{1}{3^{s+1}}\right)^{-1}\frac{\zeta_{\mathbb{Q}(\omega)}(s)}{\zeta_{\mathbb{Q}(\omega)}(s+1)}\\
&\hspace{2cm}\times\prod_{\pi\equiv 1(\mathrm{mod}\ 3)}\left(1+\frac{1}{N(\pi)^{s+1}}-\frac{1}{N(\pi)^{s+1}}\frac{1}{(1+N(\pi)^{-1})}+O\left(N(\pi)^{-2s-2}\right)\right)\\
&=\left(1-\frac{1}{3^s}\right)\left(1-\frac{1}{3^{s+1}}\right)^{-1}\frac{\zeta_{\mathbb{Q}(\omega)}(s)}{\zeta_{\mathbb{Q}(\omega)}(s+1)}\prod_{\pi\equiv 1(\mathrm{mod}\ 3)}\left(1+O(N(\pi)^{-s-2})\right).
\end{align*}
The Euler product above is analytic for $\Re(s)>-1$, and the quotient of zeta functions has poles at $s=1$ and in the region where $\Re(s)<0$. Replacing in (\ref{Z(s)}), we can  move the contour of integration depending on the range of $z$.

For $1/3<z\leq 1$, we move the line of integration in \eqref{Z(s)} to $s=-z$ crossing two simple poles at $s=0$ and $s=1/3-z<0$. The new contour contributes $O(QA^{-z})$, while the two poles give $C_z\widetilde{\omega}(1)Q$ at $s=0$, which is the main term, and $D_z\widetilde{\omega}(4/3-z)QA^{1/3-z}$ at $s=1/3-z$, which is the secondary term. The two constants $C_z$ and $D_z$ are given by
\begin{align}\label{Cubic C_z}
C_z&=\left(1+\frac{1}{3^z}+\frac{1}{3^{2z}}\right)\frac{\pi\zeta_{\mathbb{Q}(\omega)}(3z)}{2^23^{7/2}\zeta_{\mathbb{Q}(\omega)}(2)}\prod_{\pi\equiv 1(\mathrm{mod}\ 3)}\left(1-\frac{1}{N(\pi)^{3z}(N(\pi)+1)}\right),
\end{align}
and
\begin{align}\label{Cubic D_z}
D_z
&=\left(\frac{1}{1-\frac{1}{\sqrt[3]{3}}}\right)\frac{2^{z-\frac{4}{3}}\pi^{z+\frac{5}{3}}\Gamma(1/3)}{3^{\frac{35}{6}+\frac{z}{2}}\zeta_{\mathbb{Q}(\omega)}(2)\Gamma(z)}\frac{G(1/3-z)}{1/3-z}\prod_{\pi\equiv 1(\mathrm{mod}\ 3)}\left(1-\frac{1}{N(\pi)(N(\pi)+1)}\right).
\end{align}

For $0<z<1/3$, the same analysis follows but now the pole at $s=1/3-z$ is greater than the pole at $s=0$. Hence, we get the same terms with the same constants in the asymptotic but with the main and secondary terms switched. 

At $z=0$, we capture the pole at $s=1/3$ and then only move the line of integration to $s=\epsilon$ since there might be poles for $\Re(s)<0$. This gives an error term of $O(QA^{\epsilon})$ and a main term of $D_0\widetilde{\omega}(4/3)QA^{1/3}$, where $D_0$ is given in \eqref{Cubic D_z} for $z=0$. Moreover if we assume GRH, we can now move the line further to $s=-1/6+\epsilon$ and capture the pole at $s=0$ which gives the additional secondary term $C_0\widetilde{\omega}(1)Q$, where $C_0$ is given in \eqref{Cubic C_z}, and a better error term of $O(QA^{-1/6+\epsilon})$.

However, at $z=1/3$, when moving the contour of integration in (\ref{Z(s)}) to $s=-1/3$, we now cross a double pole at $s=0$. Again, the new contour contributes $O(QA^{-1/3})$, while the double pole at $s=0$ now gives
\[C_{1/3}\widetilde{\omega}(1)Q\log A+C_2Q,\]
where 
\begin{align}\label{Cubic C_1/3}
C_{1/3}&=\left(\frac{1}{1-\frac{1}{\sqrt[3]{3}}}\right)\frac{\pi^2}{23^6\zeta_{\mathbb{Q}(\omega)}(2)}\prod_{\pi\equiv 1\mathrm(3)}\left(1-\frac{1}{N(\pi)(N(\pi)+1)}\right)
\end{align}
and 
\begin{align}\label{Cubic C_2}
C_2&=\left(\frac{1}{1-\frac{1}{\sqrt[3]{3}}}\right)\prod_{\pi\equiv 1\mathrm(3)}\left(1-\frac{1}{N(\pi)(N(\pi)+1)}\right)\Bigg[\frac{\pi^2\widetilde{\omega}'(1)}{2^23^5\zeta_{\mathbb{Q}(\omega)}(2)}+\frac{\pi^2\widetilde{\omega}(1)}{2^23^5\zeta_{\mathbb{Q}(\omega)}(2)}G'(0)\nonumber\\
&+\frac{\pi^2\widetilde{\omega}(1)}{2^23^5\zeta_{\mathbb{Q}(\omega)}(2)}\left(\frac{\Gamma'(1/3)}{\Gamma(1/3)-\log(2\pi)}\right)+\frac{\pi^2\log(3)\widetilde{\omega}(1)}{2^23^5\zeta_{\mathbb{Q}(\omega)}(2)}\left(\frac{1}{2}-\frac{1}{\sqrt[3]{3}-1} \right)\nonumber\\
&+\frac{\pi\widetilde{\omega}(1)}{2^23^{\frac{5}{2}}\zeta_{\mathbb{Q}(\omega)}(2)}\left(\frac{2\gamma_{\mathbb{Q}(\omega)}+\log 3}{3}+\sum_{\pi\equiv 1\mathrm(3)}\frac{\log N(\pi)}{N(\pi)(N(\pi)+1)-1}\right) \Bigg],
\end{align}
with
\[\gamma_{\mathbb{Q}(\omega)}=\lim_{s\to 1}\left((s-1)\zeta_{\mathbb{Q}(\omega)}(s)\right)' \]
being the analogue of the Euler-Mascheroni constant for $\mathbb{Q}(\omega)$.

\section{The Dual Sum} 

Replacing $\epsilon(z,\chi_q)$ by its value in \eqref{root number}, the dual sum $\mathcal{M}_2(z)$ for any $z\in [0,1]$ can be written as

\[\mathcal{M}_2(z)=\left(2\pi\right)^{2z-1}3^{1/2-z}\frac{\Gamma(1-z)}{\Gamma(z)}\sum_{k\geq 0}3^{-k(1-z)}\sum_{\substack{m\in \mathbb{Z}[\omega]\\  m\equiv 1(\mathrm{mod}\ 3)}}\frac{V_{1-z}\left(3^{k-\frac{1}{2}}\frac{N(m)}{B}\right)}{N(m)^{1-z}}\mathcal{M}_2(m,z) \]
where 
\[\mathcal{M}_2(m,z):=\sum_{\substack{q\in \mathbb{Z}[\omega]\\  q\equiv 1(\mathrm{mod}\ 9)\\ q\, \text{SF}}}\frac{\overline{\left(\frac{m}{q}\right)_3}}{N(q)^{z}}g(1,q)w\left(\frac{N(q)}{Q}\right). \]
Introducing the Mellin transform of $w$, we get 

\begin{equation}\label{M_2 tilde}
\mathcal{M}_2(m,z)=\frac{1}{2\pi i}\int_{(2)}\sum_{\substack{q\in \mathbb{Z}[\omega]\\  q\equiv 1(\mathrm{mod}\ 9)\\ q\, \text{SF}}}\frac{\overline{\left(\frac{m}{q}\right)_3}}{N(q)^{s+z}}g(1,q)\widetilde{w}(s)Q^{s}\, ds.
\end{equation}
Set
\[ \mathcal{G}_2(m,s):=\sum_{\substack{q\in \mathbb{Z}[\omega]\\  q\equiv 1(\mathrm{mod}\ 9)\\ q\, \text{SF}}}\frac{\overline{\left(\frac{m}{q}\right)_3}}{N(q)^{s}}g(1,q).\]
We need to understand the analytic behavior of $\mathcal{G}_2(m,s)$. A similar Dirichlet series appears in the theory of automorphic forms on the metaplectic group first introduced by Kubota in his seminal paper \cite{Kubota1} and later developed by Patterson \cite{Pat1, Pat2}. They obtain the analytic data of the Dirichlet series for the shifted Gauss sums, which is often called the Kubota Dirichlet series given by 
\[\psi(m,s)=\sum_{\substack{n\in \mathbb{Z}[\omega]\\  n\equiv 1(\mathrm{mod}\ 3)}}\frac{g(m,n)}{N(n)^s}, \ \ \ \Re(s)>\frac{3}{2} \]
Therefore, what remains is to relate $\mathcal{G}_2(m,s)$ to $\psi(m,s)$. This will be the goal of the next section and is achieved in Theorem \ref{Removing coprimality}.

\subsection{The Kubota Dirichlet Series}

In the early 1970's, Kubota in a series of papers \cite{Kubota1, Kubota2} introduced the theory of metaplectic forms, which are functions automorphic under certain metaplectic groups associated to a number field that contains the $n^{th}$ roots of unity. For the case $n=3$, which is what concerns us here, he introduced the cubic metaplectic theta function $\theta(w)$ on the hyperbolic 3-space $\mathbb{H}^3=\{w=(z,v)\in \mathbb{C}\times \mathbb{R}^+\}$, given by
\[\theta(w)=\frac{3^{5/2}}{2}v^{2/3}+\sum_{\mu\in \lambda^{-3}\mathbb{Z}[\omega]}\tau(\mu)vK_{\frac{1}{3}}(4\pi|\mu|v)e(\mu z) \]
where $K_{\frac{1}{3}}$ is the modified Bessel function of the second kind. The coefficients $\tau(\mu)$ were later explicitly computed by Patterson \cite[Thm 8.1]{Pat1}, and are given in terms of cubic Gauss sums.  For any $\mu\in \lambda^{-3}\mathbb{Z}[\omega]$, the coefficients are
\begin{equation}\label{tau(r)}
\tau(\mu)=
\begin{cases}
\overline{g(\lambda^2,c)}\lvert\frac{d}{c} \rvert 3^{\frac{n}{2}+2} & \text{if} \ \mu=\pm\lambda^{3n-4}cd^3 \text{for} \ n\geq 2\\
e^{\frac{-2\pi i}{9}}\overline{g(\omega\lambda^2,c)}\lvert\frac{d}{c} \rvert 3^{\frac{n}{2}+2} & \text{if} \ \mu=\pm\omega\lambda^{3n-4}cd^3 \text{for} \ n\geq 2\\
e^{\frac{2\pi i}{9}}\overline{g(\omega^2\lambda^2,c)}\lvert\frac{d}{c} \rvert 3^{\frac{n}{2}+2} & \text{if} \ \mu=\pm\omega^{2}\lambda^{3n-4}cd^3 \text{for} \ n\geq 2\\
\overline{g(1,c)}\lvert\frac{d}{c} \rvert 3^{\frac{n+5}{2}} & \text{if} \ \mu=\pm\lambda^{3n-3}cd^3 \text{for} \ n\geq 1\\
0 & \text{otherwise},
\end{cases}
 \end{equation}
 where $c,d\in \mathbb{Z}[\omega]$ satisfy $c,d\equiv 1\pmod{3}$ and $\mu^2(c)=1$.
 
 Patterson also studied the analytic behavior of the Dirichlet series $\psi(r,s)$ and showed how its residues are related to the coefficients $\tau(\mu)$. We summarize these in the next lemma, where it is a consequence of Theorems 6.1 and 9.1 of \cite{Pat1}, together with \cite{Pat2}.

\begin{lemma}(Poles and residues)\label{Patterson}
For any $0\neq r\in \mathbb{Z}[\omega]$, the function $\psi(r,s)$ has meromorphic continuation to the complex plane. It is holomorphic in the region $\Re(s)>1$ except possibly for a simple pole at $s=4/3$. The residue at this pole equals
\[\underset{s=\frac{4}{3}}{\text{Res}}\, \psi(r,s)=\frac{c_0\tau(r)}{N(r)^{1/6}} \ \ \text{with} \ \ c_0=\frac{(2\pi)^{5/3}}{3^{9/2}8\Gamma(2/3)\zeta_{\mathbb{Q}(\omega)}(2)},\]
where $\tau(r)$ is given in \eqref{tau(r)}.
\end{lemma}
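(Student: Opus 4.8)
\emph{Strategy.} The plan is to recover Lemma~\ref{Patterson} from the theory of cubic metaplectic Eisenstein series developed by Kubota \cite{Kubota1} and Patterson \cite{Pat1,Pat2}; the statement is essentially a repackaging of Theorems~6.1 and~9.1 of \cite{Pat1} together with \cite{Pat2}, so the real content lies in assembling those inputs and reconciling normalisations rather than in any new argument. First I would realise $\psi(r,s)$ inside the Fourier--Whittaker expansion of the cubic metaplectic Eisenstein series $E(w,s)$ on $\widetilde{\Gamma}\backslash\mathbb{H}^3$, where $\widetilde{\Gamma}$ is a congruence cover of $SL_2(\mathbb{Z}[\omega])$ over which the Kubota $2$-cocycle trivialises and $E(w,s)=\sum_{\gamma\in\widetilde{B}\backslash\widetilde{\Gamma}}\phi_s(\gamma w)$ is built from the standard metaplectic principal-series section $\phi_s$, normalised to restrict to a fixed power of the height $v$ at the cusp. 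Carrying out the Bruhat decomposition and the usual unfolding, the $\mu$-th Fourier coefficient of $E(w,s)$ for $\mu\neq 0$ comes out as an explicit archimedean factor --- a Whittaker integral producing $v\,K_{1/3}(4\pi|\mu|v)$, up to Gamma factors and a power of $|\mu|$ --- times a Dirichlet series in generalised Gauss sums which, after rewriting via the twisted multiplicativity of Lemma~\ref{properties of g}, is exactly $\psi(\,\cdot\,,s)$ evaluated at a parameter linearly related to $s$. Since $E(w,s)$ inherits meromorphic continuation to all of $\mathbb{C}$ from the Selberg--Langlands theory of rank-one Eisenstein series (as carried out by Kubota in the metaplectic setting), and the archimedean factor is holomorphic and nonvanishing for $\Re(s)>1$, the continuation of $\psi(r,s)$ and its holomorphy in $\Re(s)>1$ follow at once, with poles only where $E(w,s)$ has poles.

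\emph{Pole and residue.} The poles of $E(w,s)$ in the relevant half-plane are detected by its constant term, through the meromorphic scattering coefficient; the unique one that matters here corresponds to $s=4/3$ for $\psi$, it is simple, and the residue of $E(w,s)$ there is a square-integrable residual form on $\widetilde{\Gamma}\backslash\mathbb{H}^3$. By \cite{Pat1} and \cite{Pat2}, this residual form is a nonzero constant multiple of the cubic theta function $\theta(w)$ recalled above, whose $\mu$-th Fourier coefficient is $\tau(\mu)$ as in \eqref{tau(r)}. Taking residues termwise in the Fourier expansion and comparing the $r$-th coefficients then yields
\[
\underset{s=\frac{4}{3}}{\mathrm{Res}}\;\psi(r,s)=\frac{c_0\,\tau(r)}{N(r)^{1/6}},
\]
in which the power $N(r)^{-1/6}$ is precisely the $|r|$-power occurring in the $r$-th Fourier coefficient of $E$ at the pole, and $c_0=\dfrac{(2\pi)^{5/3}}{3^{9/2}\,8\,\Gamma(2/3)\,\zeta_{\mathbb{Q}(\omega)}(2)}$ collects the normalisation constant of $\phi_s$, the proportionality constant between $\mathrm{Res}\,E$ and $\theta$, and the archimedean ($K$-Bessel and Gamma) constants.

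\emph{Main obstacle.} Every conceptual step here is classical, and the genuine difficulty is the constant-chasing. One must fix a single consistent normalisation of $E$, of the Haar measures, of the section $\phi_s$, and of $\theta$, and then track it through the Bruhat decomposition, the archimedean Whittaker integral (where $\Gamma(2/3)$ and the powers of $2\pi$ appear), the Mellin identification of the arithmetic part with $\psi$, and finally through the residue and the identification $\mathrm{Res}\,E=(\mathrm{const})\,\theta$; matching the conventions of \cite{Kubota1}, \cite{Pat1} and \cite{Pat2} at each stage is exactly what pins down the precise value of $c_0$. Note that for the application one does not need $\psi(r,s)$ to have an actual pole for every $r$ --- and it does not, since $\tau(r)=0$ for most $r$ --- so it suffices to know $\theta\not\equiv 0$, and the qualifier ``possibly'' in the statement is genuine.
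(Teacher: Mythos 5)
Your proposal is correct and follows the same route as the paper: the paper gives no proof of this lemma at all, but simply cites it as ``a consequence of Theorems 6.1 and 9.1 of [Pat1], together with [Pat2],'' and your sketch is an accurate high-level outline of what those references establish (meromorphic continuation of the cubic metaplectic Eisenstein series $E(w,s)$, identification of $\psi(r,s)$ in its $r$-th Fourier coefficient, and identification of $\mathrm{Res}_{s}\,E$ with a multiple of $\theta(w)$ so that the residue of $\psi(r,s)$ is proportional to $\tau(r)/N(r)^{1/6}$). You honestly flag that the actual verification of $c_0$ is pure normalisation-matching across Kubota's and Patterson's conventions, which is exactly what the literature supplies and what the paper too takes as given rather than re-deriving.
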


We will also need the following bound on the $\psi$ function, which is the convexity bound, proved for $r\equiv 1\pmod 3$ by Heath-Brown and Patterson \cite[Lemma 4]{HBP79}, and extended to any $r\in \mathbb{Z}[\omega]$ by David, De Faveri, Dunn, and Stucky \cite[Lemma 6.3]{chantal2024}.

\begin{lemma}\label{Patterson2}(Convexity bound)
Let $0\neq r\in \mathbb{Z}[\omega]$ and $\epsilon>0$. For $s=\sigma+it$ with $1+\epsilon\leq \sigma\leq \frac{3}{2}+\epsilon$ and $|s-\frac{4}{3}|>\epsilon$, we have the bound
\[\psi(r,s)\ll_{\epsilon} N(r)^{\frac{3}{4}-\frac{\sigma}{2}+\epsilon}(1+t^2)^{\frac{3}{2}-\sigma+\epsilon}. \]
\end{lemma}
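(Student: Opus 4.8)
The plan is to prove this by a Phragm\'en--Lindel\"of convexity argument in the variable $s$, interpolating between absolute convergence on the right edge of the strip and a bound on the left edge coming from the Kubota--Patterson functional equation. First I would treat the line $\Re(s)=\tfrac32+\epsilon$, where the Dirichlet series $\sum_{n\equiv 1(3)}g(r,n)N(n)^{-s}$ converges absolutely. Twisted multiplicativity (Lemma \ref{properties of g}) together with the local evaluations of Lemma \ref{local properties of g} force $g(r,n)$ to be supported on a thin set of $n$ and give the elementary bound $|g(r,n)|\ll_\epsilon N(n)^{1/2+\epsilon}N\big((n,r^{\infty})\big)^{1/2}$, where $(n,r^\infty)$ is the part of $n$ built from primes dividing $r$; summing against $N(n)^{-3/2-\epsilon}$ then gives a convergent divisor-type sum over $r$, so that $\psi(r,s)\ll_\epsilon N(r)^{\epsilon}$ on that line, uniformly in $t$. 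This is already stronger than the claimed bound there.

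Next I would bring in the functional equation. Kubota's construction, as made explicit by Patterson \cite{Pat1,Pat2}, shows that the vector whose entries are $\psi(\nu r,s)$, with $\nu$ ranging over a fixed finite set of residues modulo a power of $\lambda$, satisfies a functional equation of shape $s\mapsto 2-s$ whose scattering matrix has entries assembled from gamma factors and Gauss sums; the accompanying ``root number'' has modulus $\asymp N(r)^{1-s}$, and Stirling's formula applied to the ratio of the metaplectic gamma factors contributes $\ll_\epsilon(1+t^2)^{3/2-\sigma+\epsilon}$ on $\Re(s)=\sigma$. Feeding the right-edge estimate into this functional equation at the reflected point $\Re(2-s)=\tfrac32+\epsilon$ produces $\psi(r,s)\ll_\epsilon N(r)^{1/2+\epsilon}(1+t^2)^{1+\epsilon}$ on $\Re(s)=\tfrac12-\epsilon$, which is precisely the asserted bound on that line. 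Finally, since by Lemma \ref{Patterson} the only pole of $\psi(r,s)$ in the closed strip $\tfrac12-\epsilon\le\Re(s)\le\tfrac32+\epsilon$ is the simple pole at $s=\tfrac43$ and $\psi(r,s)$ is of finite order there, I would apply Phragm\'en--Lindel\"of to $(s-\tfrac43)\psi(r,s)$ --- equivalently, the maximum modulus principle on the strip minus a small disc about $s=\tfrac43$ whose boundary is controlled by the residue formula of Lemma \ref{Patterson}, using the hypothesis $|s-\tfrac43|>\epsilon$ --- to interpolate the two edge bounds linearly in $\sigma$ and obtain $\psi(r,s)\ll_\epsilon N(r)^{3/4-\sigma/2+\epsilon}(1+t^2)^{3/2-\sigma+\epsilon}$ throughout.

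The step I expect to be the real obstacle --- the only part that is not bookkeeping --- is pinning down that the functional equation carries exactly the $N(r)$- and $t$-dependence claimed: one must verify that the analytic conductor of $\psi(r,\cdot)$ is $\asymp N(r)$ (so that the convexity exponent attached to an $s\leftrightarrow 2-s$ symmetry is the stated $\tfrac34-\tfrac\sigma2$) and track the Stirling asymptotics of the metaplectic gamma factors to get the exponent $\tfrac32-\sigma$ in $t$. This analytic input is exactly \cite[Lemma~4]{HBP79} in the case $r\equiv1\pmod3$, so in practice I would quote that lemma directly and then appeal to \cite[Lemma~6.3]{chantal2024}, where the passage to arbitrary $r\in\mathbb{Z}[\omega]$ is handled by splitting off the unit and ramified-prime parts of $r$ via the twisted multiplicativity of the generalized Gauss sums.
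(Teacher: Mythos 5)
The paper offers no proof of this lemma: it states it and cites \cite[Lemma~4]{HBP79} for $r\equiv 1\pmod 3$ and \cite[Lemma~6.3]{chantal2024} for the extension to arbitrary $r\in\mathbb{Z}[\omega]$. Your final fallback --- quoting exactly those two references --- is therefore precisely what the paper does, and your preceding Phragm\'en--Lindel\"of sketch is a fair outline of what those references actually carry out. One imprecision worth flagging: you assert that, by Lemma~\ref{Patterson}, ``the only pole of $\psi(r,s)$ in the closed strip $\tfrac12-\epsilon\le\Re(s)\le\tfrac32+\epsilon$ is the simple pole at $s=\tfrac43$.'' Lemma~\ref{Patterson} as stated in the paper only localizes the pole inside the half-plane $\Re(s)>1$; it is silent about $\Re(s)\le1$, and in the Kubota--Patterson theory the reflected point $s=\tfrac23$ is in general also a pole of $\psi(r,s)$, which sits inside your proposed strip. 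So the interpolation step as you wrote it is not justified by the cited lemma alone --- the references handle this by multiplying away all poles of the vector-valued completed object before applying Phragm\'en--Lindel\"of. Since you ultimately defer to \cite{HBP79} and \cite{chantal2024}, this is a cosmetic rather than a fatal gap, but if you intended the sketch to stand on its own you would need to either widen the list of removed poles or shrink the strip to one where Lemma~\ref{Patterson} actually gives holomorphy.
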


 \subsubsection{Relating $\mathcal{G}_2(r,s)$ to $\psi(r,s)$.}
 
The work in this section is similar to that of Heath-Brown and Patterson \cite[Lemma 3]{HBP79}, but here we work with any Eisenstein integer and not just those $\equiv 1\pmod 3$. We need to keep track of the factors $\omega^a\lambda^b$ to obtain the contribution of all the different residues in order to observe the cancellation in the main terms later.

 Using property (\ref{1}) of Lemma \ref{properties of g} for the Gauss sums $g(r,n)$, we have that
\[
\overline{\left(\frac{r}{n}\right)_3}g(1,n)=\begin{cases}
			g(r,n) & \text{if $(r,n)=1$}\\
            0 & \text{otherwise}.
		 \end{cases}
 \]
 But for $(n,r)=1$, we saw that $g(r,n)=0$ if $n$ is not SF, which means that we can now remove the SF condition in our sum to get
\[\mathcal{G}_2(r,s)=\sum_{\substack{n\in \mathbb{Z}[\omega]\\  n\equiv 1(\mathrm{mod}\ 9)\\ (n,r)=1}}\frac{g(r,n)}{N(n)^{s}}. \]
We want the sum to run over all $n\in \mathbb{Z}[\omega]$ such that $n\equiv 1 \pmod {3}$. We make use of Lemma \ref{detect mod 9} again, to write
 
 \begin{equation}\label{introducing a,b}
 \mathcal{G}_2(r,s)=\frac{1}{9}\sum_{a,b=0}^2\sum_{\substack{n\in \mathbb{Z}[\omega]\\  n\equiv 1(\mathrm{mod}\ 3)\\ (n,r)=1}}\frac{g(r,n)\overline{\chi_n(\omega^a\lambda^b)}}{N(n)^{s}}=\frac{1}{9}\sum_{a,b=0}^2\sum_{\substack{n\in \mathbb{Z}[\omega]\\  n\equiv 1(\mathrm{mod}\ 3)\\ (n,r)=1}}\frac{g(\omega^a\lambda^br,n)}{N(n)^{s}}.  
 \end{equation}
 Now set
\[\mathcal{G}_2(r,s,a,b):=\sum_{\substack{n\in \mathbb{Z}[\omega]\\  n\equiv 1(\mathrm{mod}\ 3)\\ (n,r)=1}}\frac{g(\omega^a\lambda^br,n)}{N(n)^{s}}, \]
 and proceed by writing $r=r_1r_2^2r_3^3$ where $r_i\equiv 1 \pmod 3$, $r_1,r_2$ are SF and $(r_1,r_2)=1$. Denote also by
\[ r_3^*=\prod_{\substack{\pi\mid r_3\\ \pi\nmid r_1r_2}}\pi,\] 
where the product is over primes $\pi\in \mathbb{Z}(\omega)$ that are $\equiv 1\pmod 3$. Using property (\ref{1}) again, we get

\begin{align*}
\mathcal{G}_2(r,s,a,b)&=\sum_{\substack{n\equiv 1(\mathrm{mod}\ 3)\\ (n,r_1r_2^2)=1\\ (n,r_3)=1}}\overline{\left(\frac{r_3^3}{n}\right)_3}\frac{g(\omega^a\lambda^br_1r_2^2,n)}{N(n)^{s}}\\
&=\sum_{\substack{n\equiv 1(\mathrm{mod}\ 3)\\ (n,r_1r_2^2)=1}}\frac{g(\omega^a\lambda^br_1r_2^2,n)}{N(n)^{s}}\sum_{\substack{d\mid n\\ d\mid r_3^*}}\mu(d)\\
&=\sum_{ d\mid r_3^*}\frac{\mu(d)}{N(d)^{s}}\sum_{\substack{n\equiv 1(\mathrm{mod}\ 3)\\ (dn,r_1r_2^2)=1}}\frac{g(\omega^a\lambda^br_1r_2^2,dn)}{N(n)^{s}}
\end{align*}
Since $(dn,r_1r_2^2)=1$, then $g(\omega^a\lambda^br_1r_2^2,dn)=0$ unless $dn$ is SF. So we can restrict to $dn$ SF, which implies $(d,n)=1$. Therefore, the condition $(dn,r_1r_2^2)=1$ becomes $(n,dr_1r_2^2)=1$ and we can then use property (\ref{2}) to get 
\[\mathcal{G}_2(r,s,a,b)=\sum_{ d\mid r_3^*}\frac{\mu(d)}{N(d)^{s}}g(\omega^a\lambda^br_1r_2^2,d)\mathcal{G}_2(r,s,a,b,d) \]
where
\begin{equation}\label{(m,s,a,b,d)}
\mathcal{G}_2(r,s,a,b,d):=\sum_{\substack{n\equiv 1(\mathrm{mod}\ 3)\\ (n,dr_1r_2^2)=1}}\frac{g(\omega^a\lambda^bdr_1r_2^2,n)}{N(n)^{s}}.
\end{equation}

Now, we make use of the local properties of the Gauss sums in Lemma \ref{local properties of g}. The first step concerns the primes dividing $r_2$. This results in  the following lemma. 

\begin{lemma}\label{dr_1} If $a,b\in \{0,1,2\},$ $s\in \mathbb{C}$, and $r,d\in \mathbb{Z}[\omega]$ with $r,d\equiv 1\pmod{3}$, then
\[ \mathcal{G}_2(r,s,a,b,d)=\prod_{\pi\mid r_2}(1-N(\pi)^{2-3s})^{-1}\sum_{(n,dr_1)=1}\frac{g(\omega^a\lambda^bdr_1r_2^2,n)}{N(n)^s}.\]
\end{lemma}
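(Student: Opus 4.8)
The plan is to prove the identity by peeling off, one prime at a time, the local Euler-type factor at each prime $\pi \mid r_2$ from the relaxed sum on the right-hand side, showing that every such prime contributes exactly the factor $1 - N(\pi)^{2-3s}$ while leaving behind a sum of the same shape with the coprimality condition on $n$ enlarged by $\pi$. Since $r_2$ is a fixed nonzero squarefree element it has only finitely many prime divisors, so iterating recovers the condition $(n, dr_1 r_2)=1$, which is the same as $(n, dr_1 r_2^2)=1$; that is, it recovers $\mathcal{G}_2(r,s,a,b,d)$ itself, the accumulated factors forming the product $\prod_{\pi\mid r_2}(1-N(\pi)^{2-3s})$, and dividing through yields the stated formula.

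For the single-prime step I would fix $\pi \mid r_2$ and first record the elementary facts that follow from $r_2$ being squarefree and $\equiv 1 \pmod 3$: $\pi^2 \parallel r_2^2$, $\pi \nmid dr_1$, $\pi \nmid \lambda$, and $\pi$ is coprime to every unit, so $\pi$ may be taken primary. Writing $\nu = \omega^a\lambda^b dr_1 r_2^2$ and decomposing $n = \pi^k n_1$ with $(\pi, n_1)=1$ (a bijection between $\{n\equiv 1\!\!\pmod 3\}$ and pairs $(k,n_1)$ that respects all the relevant congruence and coprimality conditions, since $(\pi, dr_1)=1$), twisted multiplicativity gives, via \eqref{2} followed by \eqref{1},
\[ g(\nu, \pi^k n_1) = g(\nu\pi^k, n_1)\, g(\nu, \pi^k) = \overline{(\pi^k/n_1)_3}\, g(\nu, n_1)\, g(\nu, \pi^k). \]

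The next step is the purely local computation of $g(\nu, \pi^k)$. Pulling the unit and ramified part out with \eqref{1} turns it into $\overline{(\omega^a\lambda^b/\pi^k)_3}\, g(R\pi^2, \pi^k)$ where $r_2=\pi r_2'$ and $R = dr_1(r_2')^2 \equiv 1 \pmod 3$ is coprime to $\pi$; then Lemma \ref{local properties of g}, applied with the parameter $j=2$, shows $g(R\pi^2,\pi^k)$ vanishes unless $k=0$, where its value is $1$, or $k = j+1 = 3$, where (as $3\mid 3$) its value is $-N(\pi)^2$. For precisely these two exponents the twist $\overline{(\pi^k/n_1)_3}$ is trivial, because $\pi^0$ and $\pi^3$ are cubes; hence $g(\nu,\pi^k n_1)=g(\nu,\pi^k)\,g(\nu,n_1)$ for the surviving $k$, the two-term local series sums to $\sum_{k\in\{0,3\}} g(\nu,\pi^k)N(\pi)^{-ks} = 1 - N(\pi)^{2-3s}$, and the residual $n_1$-sum is exactly $\sum_{(n_1,dr_1\pi)=1} g(\nu,n_1)/N(n_1)^s$. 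Iterating over the primes of $r_2$ completes the argument. All manipulations take place in a half-plane $\Re(s)$ sufficiently large where the relevant Kubota-type series converge absolutely (a bound $|g(\nu,n)|\ll_\nu N(n)^{1/2+\epsilon}$ on the moduli that occur suffices), and the identity then propagates to all $s\in\mathbb{C}$ by analytic continuation once the meromorphic continuation is available.

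The difficulty here is one of careful bookkeeping rather than of depth. Two points deserve attention: first, the factor $\omega^a\lambda^b$ in the argument is not $\equiv 1 \pmod 3$, so Lemma \ref{local properties of g} cannot be invoked until it has been stripped off using \eqref{1}; second, one must check that the residue-symbol twist produced when the $\pi$-part of $n$ is separated is trivial for exactly the exponents $k$ that survive, so that the local factor detaches from a residual sum that still carries the original argument $\nu = \omega^a\lambda^b dr_1 r_2^2$ — keeping that argument fixed, rather than letting it degrade to one built from $r_2'$, is what makes the induction over the prime divisors of $r_2$ close up cleanly.
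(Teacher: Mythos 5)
Your proof is correct and takes essentially the same route as the paper's: single out one prime $\pi\mid r_2$ at a time, use Lemma \ref{local properties of g} with $j=2$ to see that $g(\nu,\pi^k)$ survives only at $k=0$ (value $1$) and $k=3$ (value $-N(\pi)^2$), note the residue-symbol twists become trivial at exactly those exponents since $\pi^0,\pi^3$ are cubes, and iterate over the finitely many primes of $r_2$. The only cosmetic difference is organizational — you expand directly via $n=\pi^k n_1$ and factor the local series out, whereas the paper splits the sum by inclusion–exclusion into a $\pi\nmid n$ part and a $\pi\mid n$ part and derives a one-step recursion $\mathcal{S}_{1,1}=-N(\pi)^{2-3s}\mathcal{S}_1$; the two bookkeeping schemes are mathematically equivalent (and in fact your version dodges a small sign typo in the paper's display, where the $-N(\pi^2)$ from the lemma appears as $N(\pi^2)$ in the intermediate line though the final $(1-N(\pi)^{2-3s})^{-1}$ is correct).
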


\begin{proof} We want to induct on the number of primes that divide $r_2$. For that purpose, write $r_2=\beta\pi$ where $\pi$ is a prime and $\beta\in \mathbb{Z}[\omega]$, and consider 
\begin{align}\label{S_1}
\mathcal{S}_1&:=\sum_{(n,dr_1\pi)=1}\frac{g(\omega^a\lambda^bdr_1\beta^2\pi^2,n)}{N(n)^s}\nonumber\\
&=\sum_{(n,dr_1)=1}\frac{g(\omega^a\lambda^bdr_1\beta^2\pi^2,n)}{N(n)^s}-\sum_{\substack{(n,dr_1)=1\\ \pi\mid n}}\frac{g(\omega^a\lambda^bdr_1\beta^2\pi^2,n)}{N(n)^s}.
\end{align}
Let $\mathcal{S}_{1,1}$ be the second sum on the right hand side of \eqref{S_1}. Writing $n=\pi^kn'$,  we obtain the term $g(\omega^a\lambda^b dr_1\beta^2\pi^2,\pi^kn')$ which splits as
\[g(\omega^a\lambda^b dr_1\beta^2\pi^2,\pi^kn')=g(\omega^a\lambda^bdr_1\beta^2\pi^{2+k},n')g(\omega^a\lambda^bdr_1\beta^2\pi^2,\pi^k), \]
by property \eqref{1}. Using Lemma \ref{local properties of g}, we deduce that $g(\omega^a\lambda^bdr_1\beta^2\pi^2,\pi^k)$ vanishes when $k\neq 3$ and equals $-N(\pi^2)$ when $k=3$, for any $a,b\in\{0,1,2\}$. Thus $\mathcal{S}_{1,1}$ is restricted to the terms $n=\pi^3n'$, and thus
\begin{align*}
\mathcal{S}_{1,1}&=\sum_{\substack{(\pi^3n,dr_1)=1\\ \pi\nmid n}}N(\pi^2)\frac{g(\omega^a\lambda^bdr_1\beta^2\pi^{2+3},n)}{N(n)^sN(\pi)^{3s}}\\
&=\sum_{(n,\pi dr_1)=1}N(\pi)^{-3s+2}\frac{g(\omega^a\lambda^bdr_1\beta^2\pi^{2+3},n)}{N(n)^s}=N(\pi)^{-3s+2}\mathcal{S}_1.
\end{align*}
Therefore, \[\mathcal{S}_1=(1-N(\pi)^{2-3s})^{-1}\sum_{(n,dr_1)=1}\frac{g(dr_1\beta^2\pi^2,n)}{N(n)^s}. \]
Now write $r_2=\beta\pi_1\pi_2$ and consider

\begin{align}\label{S_2}
\mathcal{S}_2&:=\sum_{(n,dr_1\pi_1\pi_2)=1}\frac{g(\omega^a\lambda^bdr_1\beta^2\pi_1^2\pi_2^2,n)}{N(n)^s}\nonumber\\
&=\sum_{(n,dr_1\pi_1)=1}\frac{g(\omega^a\lambda^bdr_1\beta^2\pi_1^2\pi_2^2,n)}{N(n)^s}-\sum_{\substack{(n,dr_1\pi_1)=1\\ \pi_2\mid n}}\frac{g(\omega^a\lambda^bdr_1\beta^2\pi_1^2\pi_2^2,n)}{N(n)^s} 
\end{align}
Again, call the second sum in \eqref{S_2} $\mathcal{S}_{2,1}$ and, as we did above, we similarly get
\begin{equation}\label{S_2,2}
\mathcal{S}_2=(1-N(\pi_2)^{2-3s})^{-1}\sum_{(n,dr_1\pi_1)=1}\frac{g(\omega^a\lambda^bdr_1\beta^2\pi_1^2\pi_2^2,n)}{N(n)^s}. 
\end{equation}
Now, the sum in the right hand side of \eqref{S_2,2} is just a copy of $\mathcal{S}_1$ with $\beta\pi_1$ in place of $\beta$, and so we use that case to get
\[ \mathcal{S}_2=(1-N(\pi_2)^{2-3s})^{-1}(1+N(\pi_1)^{2-3s})^{-1}\sum_{(n,dr_1)=1}\frac{g(\omega^a\lambda^bdr_1\beta^2\pi_1^2\pi_2^2,n)}{N(n)^s}.\]
Exhausting all the primes that divide $r_2$, and using induction on the number of these primes, we finally arrive at

\begin{equation*}
\sum_{(n,dr_1r_2)=1}\frac{g(\omega^a\lambda^bdr_1r_2^2,n)}{N(n)^s}=\prod_{\pi\mid r_2}(1-N(\pi)^{2-3s})^{-1}\sum_{(n,dr_1)=1}\frac{g(\omega^a\lambda^bdr_1r_2^2,n)}{N(n)^s},
\end{equation*}
as desired.
\end{proof}

That done, we now turn to studying $dr_1$ locally at its primes and finally prove the following:

\begin{theorem}\label{Removing coprimality}
  For $a,b\in \{0,1,2\},$ $s\in \mathbb{C}$, and $r,d\in \mathbb{Z}[\omega]$ with $r,d\equiv 1\pmod{3}$, we have that
  \begin{align*}
\mathcal{G}_2(r,s,a,b,d)&=\prod_{\pi\mid r_2}(1-N(\pi)^{2-3s})^{-1}\prod_{\pi\mid dr_1}(1-N(\pi)^{2-3s})^{-1}\\
&\times \sum_{\alpha\mid dr_1}\mu(\alpha)N(\alpha)^{1-2s}\overline{g(\omega^a\lambda^bdr_1r_2^2/\alpha,\alpha)}\psi(\omega^a\lambda^bdr_1r_2^2/\alpha,s).
  \end{align*}

\end{theorem}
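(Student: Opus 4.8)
The plan is to pick up from Lemma \ref{dr_1}, which has already stripped off the primes dividing $r_2$ and reduces the claim to evaluating
\[
S(\nu, P, s) \;:=\; \sum_{\substack{n\equiv 1\,(\mathrm{mod}\ 3)\\ (n,P)=1}}\frac{g(\nu,n)}{N(n)^s},
\qquad \nu := \omega^a\lambda^b dr_1 r_2^2,\quad P := dr_1 ,
\]
where $P$ is squarefree, coprime to $\lambda r_2$, has all prime factors $\equiv 1\,(\mathrm{mod}\ 3)$, and satisfies $P\mid\nu$ with $(P,\nu/P)=1$. I would prove, by induction on the number of prime divisors of $P$, the more general identity
\[
S(\nu, P, s)=\prod_{\pi\mid P}\bigl(1-N(\pi)^{2-3s}\bigr)^{-1}\sum_{\alpha\mid P}\mu(\alpha)N(\alpha)^{1-2s}\,\overline{g(\nu/\alpha,\alpha)}\,\psi(\nu/\alpha,s),
\]
valid first for $\Re(s)$ large (where all the series involved converge absolutely, $\psi$ having abscissa $3/2$) and then for all $s\in\mathbb C$ by the meromorphic continuation of $\psi$ in Lemma \ref{Patterson}. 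Taking $P=dr_1$ and combining with Lemma \ref{dr_1} gives the theorem; the base case $P=1$ is immediate since $S(\nu,1,s)=\psi(\nu,s)$ and $g(\nu,1)=1$.

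For the inductive step I would write $P=\pi P'$ with $\pi\nmid P'$ and remove the coprimality condition at $\pi$ in two steps. Isolating the terms with $\pi\mid n$ in $S(\nu,P',s)$: for $n=\pi^k n'$ with $\pi\nmid n'$ we have $g(\nu,\pi^k n')=g(\nu\pi^k,n')\,g(\nu,\pi^k)$ by \eqref{2}, and since $\pi$ divides $\nu$ exactly once, Lemma \ref{local properties of g} (after peeling off the unit--ramified factor $\omega^a\lambda^b$ via \eqref{1}) forces $g(\nu,\pi^k)=0$ unless $k=2$, where $g(\nu,\pi^2)=N(\pi)\,\overline{g(\nu/\pi,\pi)}$; together with $g(\nu\pi^2,n')=g(\nu/\pi,n')$ (again \eqref{1}, using $(\pi/n')_3^3=1$) this yields a relation between $S(\nu,P',s)$, $S(\nu,P,s)$, and the sum $\sum_{(n,P')=1,\ \pi\nmid n}g(\nu/\pi,n)N(n)^{-s}$. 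A parallel computation with $\nu$ replaced by $\nu/\pi$ (which $\pi$ no longer divides, so $g(\nu/\pi,\pi^k)=0$ unless $k=1$) gives a second such relation. Setting $A:=N(\pi)^{1-2s}\overline{g(\nu/\pi,\pi)}$ and $B:=g(\nu/\pi,\pi)/N(\pi)^s$ and eliminating the common inner sum produces
\[
S(\nu,P,s)\,(1-AB)=S(\nu,P',s)-A\,S(\nu/\pi,P',s),
\]
and since $(\pi,\nu/\pi)=1$ gives $g(\nu/\pi,\pi)=\overline{(\,\cdot/\pi\,)_3}\,g(\pi)$ with $|g(\pi)|^2=N(\pi)$ for the prime (hence squarefree) $\pi$, one gets $AB=N(\pi)^{1-3s}|g(\nu/\pi,\pi)|^2=N(\pi)^{2-3s}$.

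It then remains to feed in the induction hypothesis for $S(\nu,P',s)$ and $S(\nu/\pi,P',s)$ — legitimate since $P'\mid\nu$ and $P'\mid\nu/\pi$ each with the complementary factor coprime to $P'$ — and to reassemble the M\"obius sum: writing each divisor $\beta\mid P$ uniquely as $\beta=\alpha$ or $\beta=\pi\alpha$ with $\alpha\mid P'$, the terms $\beta=\alpha$ come directly from $S(\nu,P',s)$, while $-A$ times the $\alpha$-term of $S(\nu/\pi,P',s)$ becomes exactly the $\beta=\pi\alpha$ term once one uses $\mu(\pi\alpha)=-\mu(\alpha)$, $N(\pi\alpha)^{1-2s}=N(\pi)^{1-2s}N(\alpha)^{1-2s}$, and the factorization $g(\nu/(\pi\alpha),\pi\alpha)=g(\nu/\pi,\pi)\,g(\nu/(\pi\alpha),\alpha)$ from \eqref{2} with $(\pi,\alpha)=1$. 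This closes the induction. I expect the real friction to be purely bookkeeping: carrying the factor $\omega^a\lambda^b$ through every Gauss sum manipulation (each time stripping it with \eqref{1} before invoking Lemma \ref{local properties of g}, and checking that the leftover cubic residue symbols $(\omega^a\lambda^b/\pi^k)_3$ collapse at precisely the surviving values of $k$), and then the careful reindexing of the M\"obius sum in the inductive step.
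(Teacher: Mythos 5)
Your proposal reproduces the paper's own proof: after Lemma~\ref{dr_1} has removed the primes of $r_2$, the paper likewise inducts on the number of primes of $dr_1$, derives exactly your elimination identity $(1-N(\pi)^{2-3s})\,S(\nu,P,s) \;=\; S(\nu,P',s)-N(\pi)^{1-2s}\overline{g(\nu/\pi,\pi)}\,S(\nu/\pi,P',s)$ from the two local computations at $\pi$ (the paper's $H_1$ and $H_2$ corresponding to your $\nu/\pi$ and $\nu$ cases via Lemma~\ref{local properties of g}), and then reassembles the M\"obius sum using the same factorization of Gauss sums from property~\eqref{2}. Your packaging in terms of $S(\nu,P,s)$ with the auxiliary modulus $P$ kept abstract is a cleaner bookkeeping wrapper, but the underlying algebra and the route through Lemma~\ref{local properties of g} are identical.
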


\begin{proof}

Write $dr_1=\alpha\pi$ for a prime $\pi\equiv 1\pmod 3$, and consider the sum
\[H_1:=\sum_{(n,\alpha)=1}\frac{g(\omega^a\lambda^br_2^2,n)}{N(n)^s}=\sum_{k=0}^{\infty}\sum_{(n',\pi\alpha)=1}\frac{g(\omega^a\lambda^br_2^2,\pi^kn')}{N(\pi)^sN(n')^s} \]
where we wrote $n=n'\pi^k$ with $(n',\pi)=1$. Since $\pi\nmid r_2$, we use Lemma \ref{local properties of g} to get that the only nonzero terms are $k=0,1$ which contribute
\[ \sum_{(n',\pi\alpha)=1}\frac{g(\omega^a\lambda^br_2^2,n')}{N(n')^s}+\sum_{(n',\pi\alpha)=1}\frac{g(\omega^a\lambda^br_2^2,\pi n')}{N(\pi)^sN(n')^s}\]
As before, in the second sum use $g(\omega^a\lambda^br_2^2,\pi n')=g(\omega^a\lambda^br_2^2\pi,n')g(\omega^a\lambda^br_2^2,\pi)$ to get
\begin{equation}\label{H_1}
H_1=\sum_{(n,\pi\alpha)=1}\frac{g(\omega^a\lambda^br_2^2,n)}{N(n)^s}+N(\pi)^{-s}g(\omega^a\lambda^br_2^2,\pi)\sum_{(n,\pi\alpha)=1}\frac{g(\omega^a\lambda^br_2^2\pi, n)}{N(n)^s}.
\end{equation}
Now consider another sum 
\[H_2:=\sum_{(n,\alpha)=1}\frac{g(\omega^a\lambda^b\pi r_2^2,n)}{N(n)^s}=\sum_{k=0}^{\infty}\sum_{(n',\pi\alpha)=1}\frac{g(\omega^a\lambda^b\pi r_2^2,\pi^kn')}{N(\pi)^sN(n')^s}. \]
Again using Lemma \ref{local properties of g}, we see that the only terms that remain are for $k=0,2$; and we again write  $g(\omega^a\lambda^b\pi r_2^2,\pi^2 n')=g(\omega^a\lambda^br_2^2\pi,\pi^2)g(\omega^a\lambda^b\pi^3 r_2^2,n')$ to get

\begin{equation}\label{H_2}
H_2=\sum_{(n,\pi\alpha)=1}\frac{g(\omega^a\lambda^b\pi r_2^2,n)}{N(n)^s}+N(\pi)^{1-2s}\overline{g(\omega^a\lambda^br_2^2,\pi)}\sum_{(n,\pi\alpha)=1}\frac{g(\omega^a\lambda^br_2^2,n)}{N(n)^s}.
\end{equation}
Now using both (\ref{H_1}) and (\ref{H_2}), and the fact that $\lvert g(\omega^a\lambda^br_2^2,\pi)\rvert^2=N(\pi)$, one obtains

\begin{align}\label{*}
&\sum_{(n,\pi\alpha)=1}\frac{g(\omega^a\lambda^b\pi r_2^2,n)}{N(n)^s}\left(1-N(\pi)^{2-3s}\right)\nonumber\\
&=\sum_{(n,\alpha)=1}\frac{g(\omega^a\lambda^b\pi r_2^2,n)}{N(n)^s}-N(\pi)^{1-2s}\overline{g(\omega^a\lambda^br_2^2,\pi)}\sum_{(n,\alpha)=1}\frac{g(\omega^a\lambda^br_2^2,n)}{N(n)^s}.
\end{align}
We want to continue in this fashion and perform induction on the number of primes dividing $dr_1$. For the induction hypothesis, assume 

\begin{equation}\label{+}
\sum_{(n,\alpha)=1}\frac{g(\omega^a\lambda^b\alpha r_2^2,n)}{N(n)^s}\prod_{p\mid \alpha}\left(1-N(p)^{2-3s}\right)=\sum_{a\mid \alpha}\mu(a)N(a)^{1-2s}\overline{g(\omega^a\lambda^b\alpha r_2^2/a,a)}\psi(\alpha r_2^2/a,s),
\end{equation}
and denote by
\[J_1:=\sum_{(n,\alpha\pi)=1}\frac{g(\omega^a\lambda^b\alpha\pi r_2^2,n)}{N(n)^s}. \]
Use (\ref{*}) with $\alpha r_2^2$ in place of $r_2^2$ to get that
\[J_1=\left(1-N(\pi)^{2-3s}\right)^{-1}\left[\sum_{(n,\alpha)=1}\frac{g(\omega^a\lambda^b\alpha r_2^2\pi,n)}{N(n)^s}-N(\pi)^{1-2s}\overline{g(\omega^a\lambda^b\alpha r_2^2,\pi)}\sum_{(n,\alpha)=1}\frac{g(\omega^a\lambda^b\alpha r_2^2,n)}{N(n)^s}\right] \]
where we can now use the inductive step (\ref{+}) for these two sums, with $r_2^2$ replaced by $\pi r_2^2$ in the first sum, to obtain that $J_1$ equals
\begin{align*}
&\prod_{p\mid \alpha\pi}\left(1-N(p)^{2-3s}\right)^{-1}\Bigg[\sum_{a\mid \alpha}\mu(a)N(a)^{1-2s}\overline{g(\omega^a\lambda^b\alpha\pi r_2^2/a,a)}\psi(\alpha\pi r_2^2/a,s)\\
&-N(\pi)^{-2s+1}\overline{g(\omega^a\lambda^b\alpha r_2^2,\pi)}\sum_{a\mid \alpha}\mu(a)N(a)^{1-2s}\overline{g(\omega^a\lambda^b\alpha r_2^2/a,a)}\psi(\alpha r_2^2/a,s)\Bigg].
\end{align*}
Finally, using $g(\omega^a\lambda^b\alpha r_2^2,\pi)g(\omega^a\lambda^b\alpha r_2^2/a,a)=g(\omega^a\lambda^b\alpha r_2^2/a,a\pi)$ and $\mu(\pi)=-1$, and recalling that $dr_1=\alpha\pi$, we can plug this in Lemma \ref{dr_1} and the result will then follow.
\end{proof}

Recalling that 
\begin{equation}\label{mathcal G}
\mathcal{G}_2(r,s)=\frac{1}{9}\sum_{a,b=0}^2\sum_{d|r_3^*}\frac{\mu(d)}{N(d)^s}g(\omega^a\lambda^br_1r_2^2,d)\mathcal{G}_2(r,s,a,b,d),
\end{equation}
Theorem \ref{Removing coprimality} will thus relate $\mathcal{G}_2(r,s)$ to the Kubota Dirichlet series $\psi(r,s)$ as needed.

\subsection{Asymptotic for the Dual Sum}

Now that we have Theorem \ref{Removing coprimality}, we can use the two Lemmata \ref{Patterson} and \ref{Patterson2} to deduce similar results for $\mathcal{G}_2(r,s)$. Consequently, we move the line of integration in \eqref{M_2 tilde} to the line $s=1-z+\epsilon$, crossing a simple pole at $s=4/3-z$ only, which comes from the $\psi$ function. Denote by $\mathcal{M}_2'(z)$ the contribution of $\mathcal{M}_2(z)$ from this pole, and by $\mathcal{M}_2''(z)$ the contribution of the new contour.

\begin{corollary}\label{M_2''}
For any $z\in [0,1]$ and $\epsilon>0$, we have the bound
\[\mathcal{M}_2''(z)\ll Q^{1-z+\epsilon}B^{\frac{1}{4}+z+\epsilon}. \]
\end{corollary}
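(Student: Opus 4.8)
The plan is to make the identity relating $\mathcal{G}_2$ to the Kubota series $\psi$ explicit inside \eqref{M_2 tilde} and then estimate the shifted contour by absolute values. Writing $m=r_1r_2^2r_3^3$ as in the analysis preceding Theorem \ref{Removing coprimality}, formula \eqref{mathcal G} together with Theorem \ref{Removing coprimality} expresses $\mathcal{G}_2(m,s)$ as a finite sum, indexed by $a,b\in\{0,1,2\}$, $d\mid r_3^*$ and $\alpha\mid dr_1$, of terms built from the scalars $\mu(d)N(d)^{-s}$ and $\mu(\alpha)N(\alpha)^{1-2s}$, the Gauss sums $g(\omega^a\lambda^b r_1r_2^2,d)$ and $\overline{g(\omega^a\lambda^b dr_1r_2^2/\alpha,\alpha)}$, the two local products $\prod_{\pi\mid r_2}(1-N(\pi)^{2-3s})^{-1}$ and $\prod_{\pi\mid dr_1}(1-N(\pi)^{2-3s})^{-1}$, and a single value $\psi(\omega^a\lambda^b dr_1r_2^2/\alpha,s)$. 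By Lemma \ref{Patterson} each $\psi(\cdot,s)$ is holomorphic for $\Re(s)>1$ except for the simple pole at $s=4/3$; hence, moving the $s$-integral in \eqref{M_2 tilde} from $\Re(s)=2$ to $\Re(s)=1-z+\epsilon$ (with $\epsilon$ small) crosses only the pole at $s=4/3-z$ — whose residue is, by definition, $\mathcal{M}_2'(z)$ — and on the new line one has $\Re(s+z)=1+\epsilon$, with $\mathcal{M}_2''(z)$ the leftover integral.

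The central step is to bound $\mathcal{G}_2(m,\sigma+it)$ for $\sigma=1+\epsilon$, uniformly in $m$. On that line the two Euler products are $\prod(1+O(N(\pi)^{-1-3\epsilon}))\ll N(m)^{\epsilon}$; every Gauss sum that appears has the form $g(\nu,c)$ with $c$ squarefree and $(\nu,c)=1$ (since $d$, $\alpha$, $dr_1/\alpha$ and $r_2$ are pairwise coprime, squarefree, and $\equiv1\pmod 3$), so $|g(\nu,c)|=N(c)^{1/2}$; one has $|N(\alpha)^{1-2s}|=N(\alpha)^{-1-2\epsilon}$; and the convexity bound Lemma \ref{Patterson2} at $\sigma=1+\epsilon$ gives $\psi(r,s)\ll N(r)^{1/4+\epsilon}(1+|t|)^{1+\epsilon}$. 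The point that makes everything fit is that the argument $\omega^a\lambda^b dr_1r_2^2/\alpha$ of $\psi$ has norm $\ll N(dr_1r_2^2)\le N(r_3)\,N(r_1r_2^2)\le N(m)$: the cube part $r_3^3$ of $m$ enters $\mathcal{G}_2$ only through its squarefree kernel $d$, and only to first power. Balancing the powers of $N(d)$ and $N(\alpha)$ one checks that the $d$- and $\alpha$-sums converge, to $O(N(m)^{\epsilon})$, so that
\[ \mathcal{G}_2(m,\sigma+it)\ll N(m)^{1/4+\epsilon}(1+|t|)^{1+\epsilon},\qquad\sigma=1+\epsilon. \]

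To finish, I would integrate this against $\widetilde{w}$ along $\Re(s)=1-z+\epsilon$. Since $\widetilde{w}$ is entire and decays faster than any polynomial on vertical lines, $\int_{\R}(1+|t|)^{1+\epsilon}\lvert\widetilde{w}(1-z+\epsilon+it)\rvert\,dt\ll_\epsilon1$ uniformly in $z$, so the inner $s$-integral in \eqref{M_2 tilde} is $\ll Q^{1-z+\epsilon}N(m)^{1/4+\epsilon}$. Substituting into the formula for $\mathcal{M}_2(z)$ — where $\Gamma(1-z)\Gamma(z)^{-1}V_{1-z}(y)\ll_E(1+y)^{-E}$ uniformly for $z\in[0,1]$, as one sees by writing $V_{1-z}$ through its Mellin integral to absorb the apparent pole of $\Gamma(1-z)$ at $z=1$ — Lemma \ref{Bounds on V_s} truncates the $m$-sum at $N(m)\ll 3^{-k}B^{1+\epsilon}$, and then the elementary estimate $\sum_{N(m)\le Y}N(m)^{z-3/4+\epsilon}\ll Y^{z+1/4+\epsilon}$ (valid as $z-\tfrac34>-1$ on $[0,1]$) gives, for each $k$, a contribution $\ll Q^{1-z+\epsilon}(3^{-k}B)^{z+1/4+\epsilon}$. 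Summing $\sum_{k\ge0}3^{-k(1-z)}3^{-k(z+1/4)}=\sum_{k\ge0}3^{-5k/4}=O(1)$ — the ratio $3^{-5/4}$ being independent of $z$, so the bound is uniform on $[0,1]$ — yields $\mathcal{M}_2''(z)\ll Q^{1-z+\epsilon}B^{1/4+z+\epsilon}$, as claimed.

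I expect the central step to be the main obstacle: checking that the convexity bound for $\psi$, the trivial size of the Gauss sums, and the local Euler factors together cost no more than $N(m)^{1/4+\epsilon}$. This hinges on the bookkeeping behind Theorem \ref{Removing coprimality}, namely that passing from $\mathcal{G}_2(r,s)$ to $\psi$ replaces the cube part $r_3$ by its squarefree kernel — keeping the shifted $\psi$-arguments of norm $\le N(m)$ — and that the factors entering each Gauss sum are coprime, so that $|g(\nu,c)|=N(c)^{1/2}$ genuinely holds. Everything else (rapid decay of $\widetilde{w}$, a divisor-type bound for the finitely many arithmetic sums, and summing a geometric series in $k$) is routine.
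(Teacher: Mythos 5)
Your proposal is correct and follows essentially the same route as the paper: move the $s$-contour in \eqref{M_2 tilde} to $\Re(s)=1-z+\epsilon$ (so $\Re(s+z)=1+\epsilon$), expand $\mathcal{G}_2$ via Theorem~\ref{Removing coprimality} and \eqref{mathcal G}, bound each Gauss sum by its modulus, apply the convexity bound of Lemma~\ref{Patterson2} to $\psi$, integrate against $\widetilde{w}$, then truncate the $m$-sum by the decay of $V_{1-z}$ and sum the geometric series in $k$. The only organizational difference is that you isolate the clean intermediate estimate $\mathcal{G}_2(m,1+\epsilon+it)\ll N(m)^{1/4+\epsilon}(1+|t|)^{1+\epsilon}$ before summing over $m$ (using $N(r_1)^{1/4}N(r_2)^{1/2}\le N(r_1r_2^2r_3^3)^{1/4}$), whereas the paper carries the explicit factors $N(r_1)^{1/4+\epsilon}N(r_2)^{1/2+\epsilon}/N(r_1r_2^2r_3^3)^{1-z+E}$ through and chooses the free parameter $E=1/4+z+\epsilon$ at the end; this is the same bookkeeping arranged differently.
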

\begin{proof}

Recall that $r=r_1r_2^2r_3^3$. Applying Theorem \ref{Removing coprimality} into \eqref{mathcal G} to bound the integral \eqref{M_2 tilde} on the line $s=1-z+\epsilon$, and replacing in $\mathcal{M}_2(z)$ will give
\begin{align*}
\mathcal{M}_2''(z)&\ll_z\sum_{k\geq 0}3^{-k(1-z)}\sum_{N(r)\leq 3^{-k+\frac{1}{2}}B^{1+\epsilon}}\frac{\left\lvert V_{1-z}\left(3^{k-\frac{1}{2}}\frac{N(r)}{B}\right)\right\rvert}{N(r)^{1-z}}\int_{-\infty}^{\infty}\Bigg\lvert \prod_{\pi\mid r_2}(1-N(\pi)^{-1-\epsilon-it})^{-1}\\
&\times\prod_{\pi\mid dr_1}(1-N(\pi)^{-1-\epsilon-it})^{-1}\sum_{ d\mid r_3^*}\frac{g(r_1r_2^2,d)}{N(d)^{1+\epsilon+it}}\sum_{a\mid dr_1}\frac{\overline{g(dr_1r_2^2/a,a)}}{N(a)^{1+\epsilon+2it}}\widetilde{\omega}(1-z+\epsilon+it)\\
&\hspace{6cm}\times\psi\left(dr_1r_2^2/a,1+\epsilon+it\right)Q^{1-z+\epsilon+it}\bigg\rvert\,dt
\end{align*}
We use Lemma \ref{Bounds on V_s} to bound the $V_{1-z}$ function by
\[V_{1-z}\left(3^{k-\frac{1}{2}}\frac{N(r)}{B}\right)\ll_E \left(1+\frac{3^{k-\frac{1}{2}}N(r)}{B\sqrt{(4-z)(5-z)}}\right)^{-E}\ll_{E,z} \left(3^{k-\frac{1}{2}}\frac{N(r)}{B}\right)^{-E},\]
for an $E>0$ that we will choose at the end. Hence
\begin{align*}
\mathcal{M}_2''(z)&\ll_{z,E} Q^{1-z+\epsilon}B^E\sum_{k\geq 0}3^{-k(1-z+E)+\frac{E}{2}}\sum_{N(r)\leq 3^{-k+\frac{1}{2}} B^{1+\epsilon}}\frac{1}{N(r)^{1-z+E}}\int_{-\infty}^{\infty}\Bigg\lvert
\sum_{ d\mid r_3^*}\frac{g(r_1r_2^2,d)}{N(d)^{1+\epsilon+it}}\\
&\times\sum_{a\mid dr_1}\frac{\overline{g(dr_1r_2^2/a,a)}}{N(a)^{1+\epsilon+2it}}\widetilde{\omega}(1-z+\epsilon+it)\,\psi\left(dr_1r_2^2/a,1+\epsilon+it\right)\big\rvert\,dt.
\end{align*}
Bounding the Gauss sums trivially and using Lemma \ref{Patterson2} to bound the $\psi$ function, we get
\begin{align*}
\mathcal{M}_2''(z)&\ll_{z,E,\epsilon} Q^{1-z+\epsilon}B^E\sum_{k\geq 0}3^{-k(1-z+E)+\frac{E}{2}}\sum_{N(r) \leq 3^{-k+\frac{1}{2}}B^{1+\epsilon}}\frac{N(r_1)^{1/4+\epsilon}N(r_2)^{1/2+\epsilon}}{N(r_1r_2^2r_3^3)^{1-z+E}}\\
& \hspace{1cm}\times\sum_{ d\mid r_3^*}\frac{1}{N(d)^{1/4+\epsilon}}\sum_{a\mid dr_1}\frac{1}{N(a)^{3/4+\epsilon}}\int_{-\infty}^{\infty} |\widetilde{\omega}(1-z+\epsilon+it)|(1+t^2)^{1/2} \,dt\\
&\ll_{z,\epsilon} Q^{1-z+\epsilon}B^{\frac{1}{4}+z+\epsilon}
\end{align*}
for any $\epsilon>0$, upon choosing $E=1/4+z+\epsilon$ such that the $r$-sum is bounded by $B^{\epsilon}$.
\end{proof}

For the main term, we use Theorem \ref{Removing coprimality} in \eqref{mathcal G}, and then replace in \eqref{M_2 tilde} and $\mathcal{M}_2(z)$ for any $z\in [0,1]$ to get that the contribution of the pole at $s=4/3-z$ is
\begin{align*}
\mathcal{M}_2'(z)&=\left(\frac{2}{\sqrt{3}}\right)^{2z-1}\frac{\pi^{2z-1}\Gamma(1-z)}{\Gamma(z)}Q^{\frac{4}{3}-z}\widetilde{\omega}\left(\frac{4}{3}-z\right)\sum_{k\geq 0}3^{-k(1-z)}\sum_{r}\frac{V_{1-z}\left(3^{k-\frac{1}{2}}\frac{N(r)}{B}\right)}{N(r)^{1-z}}\\
&\times\frac{1}{9}\sum_{a,b=0}^2\sum_{d\mid r_3^*}\frac{\mu(d)}{N(d)^{4/3}}g(\omega^a\lambda^br_1,d)\prod_{\pi|dr_1}\left(1-N(\pi)^{-2}\right)^{-1}\\
&\times \sum_{c\mid dr_1}\frac{\mu(c)}{N(c)^{5/3}}\overline{g(\omega^a\lambda^bdr_1/c,c)}\,\underset{s=\frac{4}{3}}{\text{Res}}\,\psi(\omega^a\lambda^bdr_1/c,s).
\end{align*}
By Lemma \ref{Patterson},
\begin{align*}
\mathcal{M}_2'(z)&=\frac{\pi^{2z+\frac{2}{3}} \Gamma(1-z)Q^{\frac{4}{3}-z}\widetilde{\omega}\left(\frac{4}{3}-z\right)}{2^{\frac{7}{3}-2z}3^{z+3}\Gamma(2/3)\Gamma(z)\zeta_{\mathbb{Q}(\omega)}(2)}\sum_{k\geq 0}3^{-k(1-z)}\sum_{r}\frac{V_{1-z}\left(3^{k-\frac{1}{2}}\frac{N(r)}{B}\right)}{N(r)^{1-z}}\\
&\times\sum_{a,b=0}^2\delta_{a,b}\sum_{d\mid r_3^*}\frac{\mu(d)}{N(d)^{4/3}}\prod_{\pi|dr_1}\left(1-N(\pi)^{-2}\right)^{-1}\\
&\times\sum_{c\mid dr_1}\frac{\mu(c)}{N(c)^{5/3}}g(\omega^a\lambda^br_1,d)\overline{g(\omega^a\lambda^bdr_1/c,c)}\overline{g(\omega^a\lambda^b,dr_1/c)}N(dr_1/c)^{-2/3},
\end{align*}
 where $$\delta_{a,b}=\begin{cases}
1 & a=b=0\\
 3^{-1/3}& a=0, b=2\\
 3^{-1/3}e^{-2\pi i/9}& a=1,b=2\\
 3^{-1/3}e^{2\pi i/9}& a=b=2\\
 0&\text{otherwise}.
\end{cases}$$
Now from property (\ref{2}), we have the two equations
\[ g(\omega^a\lambda^b,dr_1)=g(\omega^a\lambda^b,dr_1/c)g(\omega^a\lambda^bdr_1/c,c),\]
and 
\[g(\omega^a\lambda^b,dr_1)=g(\omega^a\lambda^br_1,d)g(\omega^a\lambda^b,r_1). \]
Consequently, the three Gauss sums can be nicely combined to obtain
\begin{align*}
\mathcal{M}_2'(z)&=\frac{\pi^{2z+\frac{2}{3}} \Gamma(1-z)Q^{\frac{4}{3}-z}\widetilde{\omega}\left(\frac{4}{3}-z\right)}{2^{\frac{7}{3}-2z}3^{z+3}\Gamma(2/3)\Gamma(z)\zeta_{\mathbb{Q}(\omega)}(2)}\sum_{a,b=0}^2\delta_{a,b}\sum_{k\geq 0}3^{-k(1-z)}\sum_{r}\frac{V_{1-z}\left(3^{k-\frac{1}{2}}\frac{N(r)}{B}\right)}{N(r)^{1-z}}\\
&\times\frac{\overline{g(\omega^a\lambda^b,r_1)}}{N(r_1)^{2/3}}\sum_{d\mid r_3^*}\frac{\mu(d)}{N(d)}\prod_{\pi|dr_1}\left(1-N(\pi)^{-2}\right)^{-1}\sum_{c\mid dr_1}\frac{\mu(c)}{N(c)}.
\end{align*}
Now note that
\[\sum_{c\mid dr_1}\frac{\mu(c)}{N(c)}=\prod_{\pi\mid dr_1}\left(1-\frac{1}{N(\pi)}\right). \]
Therefore,
 \[\prod_{\pi|dr_1}\left(1-N(\pi)^{-2}\right)^{-1}\prod_{\pi\mid dr_1}\left(1-\frac{1}{N(\pi)}\right)=\prod_{\pi|dr_1}\left(1+\frac{1}{N(\pi)}\right)^{-1}, \]
 and since $(d,r_1)=1$, we get that
 \begin{align*}
 \sum_{d\mid r_3^*}\frac{\mu(d)}{N(d)}\prod_{\pi|d}\left(1+\frac{1}{N(\pi)}\right)^{-1}\prod_{\pi|r_1}\left(1+\frac{1}{N(\pi)}\right)^{-1}&=\prod_{\pi\mid r_3^*}\left(1-\frac{1}{N(\pi)+1}\right)\prod_{\pi|r_1}\left(1+\frac{1}{N(\pi)}\right)^{-1}\\
 &=\prod_{\pi\mid r_1r_3}\left(1+\frac{1}{N(\pi)} \right)^{-1}.
 \end{align*}
Replacing in $\mathcal{M}_2'(z)$, and then writing $V_{1-z}$ in its integral form as in Proposition \ref{AFE}, and recalling that $r=r_1r_3^3$, we get
\begin{align}\label{int of  MT(z)}
\mathcal{M}_2'(z)&=\frac{\pi^{2z+\frac{2}{3}} \Gamma(1-z)Q^{\frac{4}{3}-z}\widetilde{w}\left(\frac{4}{3}-z\right)}{2^{\frac{7}{3}-2z}3^{z+3}\Gamma(2/3)\Gamma(z)\zeta_{\mathbb{Q}(\omega)}(2)}\sum_{a,b=0}^2\frac{\delta_{a,b}}{2\pi i}\sum_{k\geq 0}\int_{(3)}3^{k(-s-1+z)+s/2}B^s\frac{G(s)}{s}g_{1-z}(s)\nonumber\\
&\times \sum_{r_3}\frac{1}{N(r_3)^{3s-3z+3}}\sum_{r_1 SF}\frac{\overline{g(\omega^a\lambda^b,r_1)}}{N(r_1)^{s-z+5/3}}\prod_{\pi\mid r_1r_3}\left(1+\frac{1}{N(\pi)} \right)^{-1} \,ds.
\end{align}
We first write
\begin{align*}
\prod_{\pi\mid r_1r_3}\left(1+\frac{1}{N(\pi)} \right)^{-1}&=\prod_{\pi\mid r_3}\left(1+\frac{1}{N(\pi)} \right)^{-1}\prod_{\substack{\pi\mid r_1\\\pi\nmid r_3}}\left(1-\frac{1}{N(\pi)+1} \right)\\
&=\prod_{\pi\mid r_3}\left(1+\frac{1}{N(\pi)} \right)^{-1}\sum_{\substack{d\mid r_1\\(d,r_3)=1}}\frac{\mu(d)}{\sigma(d)}.
\end{align*}
Next, we exchange the $r_1$ and $d$ sums, and then use $g(\omega^a\lambda^b,dr_1)=g(\omega^a\lambda^b,d)g(\omega^a\lambda^bd,r_1)$ as in property (\ref{2}), to deduce that
\begin{align}\label{exchanging r_1,d}
&\sum_{r_1 SF}\frac{\overline{g(\omega^a\lambda^b,r_1)}}{N(r_1)^{s-z+5/3}}\prod_{\pi\mid r_1r_3}\left(1+\frac{1}{N(\pi)} \right)^{-1}\nonumber\\
&=\prod_{\pi\mid r_3}\left(1+\frac{1}{N(\pi)} \right)^{-1}\sum_{\substack{(d,r_3)=1}}\frac{\mu(d)\overline{g(\omega^a\lambda^b,d)}}{N(d)^{s-z+\frac{5}{3}}\sigma(d)}\sum_{\substack{r_1 SF\\(r_1,d)=1}}\frac{\overline{g(\omega^a\lambda^bd,r_1)}}{N(r_1)^{s-z+5/3}}.
\end{align}
We are now able to employ Theorem \ref{Removing coprimality} yet again (but now with $r_2=1$), to get that
\begin{align}\label{2nd application of Thm}
\sum_{\substack{r_1 SF\\(r_1,d)=1}}\frac{\overline{g(\omega^a\lambda^bd,r_1)}}{N(r_1)^{s-z+5/3}}&=\prod_{\pi\mid d}\left(1-N(\pi)^{-3\bar{s}+3z-3}\right)^{-1}\sum_{c|d}\frac{\mu(c)g(\omega^a\lambda^bd/c,c)}{N(c)^{2\bar{s}-2z+7/3}}\nonumber\\
&\times\overline{\psi(\omega^a\lambda^bd/c,s-z+5/3)}.
\end{align}

\begin{lemma}\label{poles of integrand}
Fix $z\in [0,1]$. The integrand in \eqref{int of MT(z)} is analytic for $\Re(s)>z-\frac{2}{3}$ except for three poles at $s=0, s=z-1/3,$ and $s=z-2/3$. The poles are simple for $z\neq 1/3,2/3$, but two of them combine to give a double pole at $s=0$ when $z=1/3$ and $z=2/3.$
\end{lemma}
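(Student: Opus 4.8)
The plan is to insert the factorisations \eqref{exchanging r_1,d} and \eqref{2nd application of Thm} into the integrand of \eqref{int of MT(z)} and then read off its poles factor by factor, with $k,a,b$ held fixed. After this substitution the integrand is, up to constants, a product of four kinds of terms: (i) the entire function $3^{k(-s-1+z)+s/2}B^{s}$; (ii) $G(s)/s$, which by the hypotheses on $G$ in Proposition \ref{AFE} is meromorphic with exactly one pole, a simple one at $s=0$, and is bounded elsewhere in a wide strip; (iii) the gamma factor $g_{1-z}(s)=(2\pi)^{-s}\Gamma(1-z+s)/\Gamma(1-z)$, holomorphic for $\Re(s)>z-1$ and hence throughout the region in question; and (iv) the arithmetic part, namely the $r_3$-sum times a $d$-sum assembled from Gauss sums, $\mu$, $\sigma$, finitely many local factors $\prod_{\pi\mid d}(1-N(\pi)^{-3\bar s+3z-3})^{-1}$, and the conjugated Kubota series $\overline{\psi(\omega^{a}\lambda^{b}dr_2^{2}/c,\,s-z+5/3)}$.

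For part (iv), the $r_3$-sum together with its attached factor $\prod_{\pi\mid r_3}(1+N(\pi)^{-1})^{-1}$ has an Euler product equal to $\zeta_{\mathbb{Q}(\omega)}(3s-3z+3)$ times a product over primes that converges absolutely for $\Re(3s-3z+3)>1/2$ (the missing local factors at $3$ and at the primes dividing $d$ being harmless); it is therefore meromorphic for $\Re(s)>z-5/6$ with a single simple pole, where $3s-3z+3=1$, i.e.\ at $s=z-2/3$. For the $d$-sum I would bound the Gauss sums trivially by $N(\cdot)^{1/2}$, use $\sigma(d)\gg N(d)$ and a divisor bound for the inner $c$-sum, note that $\prod_{\pi\mid d}(1-N(\pi)^{-3\bar s+3z-3})^{-1}\ll N(d)^{\epsilon}$ away from $\Re(s)=z-1$, and apply the convexity bound of Lemma \ref{Patterson2} to $\overline{\psi(\cdot,s-z+5/3)}$ on lines where $\Re(s-z+5/3)\in[1+\epsilon,\tfrac32+\epsilon]$; this makes the generic $d$-term $\ll N(d)^{\,1/4-\frac32\Re(s-z+5/3)+\epsilon}$, so the $d$-sum converges absolutely and locally uniformly for $\Re(s)>z-2/3$ and is holomorphic there except possibly at $s=z-1/3$, where $\Re(s-z+5/3)=4/3$. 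Near that point, $\psi(\cdot,s-z+5/3)$ has a simple pole by Lemma \ref{Patterson}; I would split off its principal part $\big(s-(z-1/3)\big)^{-1}\,\overline{c_0\tau(\cdot)N(\cdot)^{-1/6}}$, control the holomorphic remainder by Lemma \ref{Patterson2} on a small circle about $4/3$, and use that the residues $c_0\tau(\cdot)N(\cdot)^{-1/6}$ read off from \eqref{tau(r)} are polynomially bounded in $N(d)$, so that the $d$-sum of the principal parts still converges. The conclusion is that the arithmetic part contributes at most a simple pole at $s=z-1/3$.

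Assembling the four pieces, the integrand of \eqref{int of MT(z)} is meromorphic on (a neighbourhood of) $\{\Re(s)\ge z-2/3\}$ with poles only at $s=0$ (from $G(s)/s$), at $s=z-1/3$ (from $\psi$), and at $s=z-2/3$ (from $\zeta_{\mathbb{Q}(\omega)}$). Since $z-1/3>z-2/3$ for all $z$, these are three distinct points unless $z=1/3$, when the $\psi$-pole lands on $s=0$, or $z=2/3$, when the $\zeta_{\mathbb{Q}(\omega)}$-pole lands on $s=0$; in either of these cases two simple poles coalesce into a double pole at $s=0$, and in all other cases the three poles remain simple. This is exactly the assertion of the lemma. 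I expect the delicate point to be the line $\Re(s)=z-2/3$ itself: there the argument of $\psi$ sits on the boundary $\Re(w)=1$ of the range in which Lemma \ref{Patterson2} is stated, so in order to continue the $d$-sum a little past that line --- which is what legitimately produces a pole \emph{at} $s=z-2/3$, coming from $\zeta_{\mathbb{Q}(\omega)}(3s-3z+3)$, rather than merely a singularity on the boundary --- one must invoke the meromorphic continuation and functional equation of the Kubota Dirichlet series, whose next feature below $\Re(w)=1$ occurs at $\Re(w)=2/3$, i.e.\ at $s=z-1$, which is safely outside $\{\Re(s)>z-2/3\}$.
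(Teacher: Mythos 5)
Your proposal is correct and follows essentially the same route as the paper: identify the pole at $s=0$ from $G(s)/s$, the pole at $s=z-1/3$ from the Kubota series $\psi$ via Lemma \ref{Patterson}, the pole at $s=z-2/3$ from the $r_3$-Euler product (which behaves like $\zeta_{\mathbb{Q}(\omega)}(3s-3z+3)$), and then verify that the remaining $c$- and $d$-sums converge absolutely using the convexity bound of Lemma \ref{Patterson2} together with trivial Gauss-sum and divisor estimates. Your exponent $N(d)^{1/4-(3/2)\Re(s-z+5/3)+\epsilon}$ for the generic term is exactly what the paper obtains (there written as $N(d')^{-\frac{3}{2}(\Re(s)-z)-\frac{9}{4}-\epsilon}$), and your pole-collision discussion for $z=1/3$ and $z=2/3$ matches. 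The one structural difference is cosmetic: the paper first rearranges the double sum by setting $d=d'c$ so that the arithmetic part can be written as a single multiplicative object $\mathcal{F}_z(s)$, whereas you keep the nested $c\mid d$ form; the resulting estimates are the same. Your closing caveat about $\Re(s)=z-2/3$ being on the boundary of the stated range of Lemma \ref{Patterson2} is well observed; in the application, however, the contour is only moved to $\Re(s)=z-2/3+\epsilon$ (the pole at $s=z-2/3$ is never actually crossed), so the convexity bound on $\Re(w)\ge 1+\epsilon$ suffices for the argument as used.
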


\begin{proof}
 Exchanging the $c$ and $d$ sums in \eqref{2nd application of Thm} and \eqref{exchanging r_1,d} by writing $d=d'c$, we get that
 \begin{equation*}
\mathcal{M}_2'(z)=Q^{4/3-z}\widetilde{w}\left(4/3-z\right)\frac{1}{2\pi i}\int_{(3)}B^s\mathcal{F}_z(s)\,ds,
\end{equation*}
where
 \begin{align}\label{mathfrak S}
\mathcal{F}_z(s)&:=\frac{1}{s}\frac{\pi^{2z+\frac{2}{3}} \Gamma(1-z)G(s)g_{1-z}(s)}{2^{\frac{7}{3}-2z}3^{z+3}\Gamma(2/3)\Gamma(z)\zeta_{\mathbb{Q}(\omega)}(2)}\sum_{k\geq 0}3^{k(-s-1+z)+s/2}\sum_{a,b=0}^2\delta_{a,b}\nonumber\\
&\times\sum_{c}\frac{\mu^2(c)}{N(c)^{2\bar{s}+s-3z+4}\sigma(c)}\sum_{d'}\frac{\mu(d')g(\omega^a\lambda^bd',c)\overline{g(\omega^a\lambda^b,cd')}}{N(d')^{s-z+\frac{5}{3}}\sigma(d')}\prod_{\pi\mid d'c}\left(1-N(\pi)^{-3\bar{s}+3z-3}\right)^{-1}\nonumber\\
&\times\overline{\psi(\omega^a\lambda^bd',s-z+5/3)}\sum_{\substack{r_3\\(r_3,cd')=1}}\frac{1}{N(r_3)^{3s-3z+3}}\prod_{\pi\mid r_3}\left(1+\frac{1}{N(\pi)} \right)^{-1}.
\end{align}
 We note that the coprimality condition $(r_3,cd')=1$ in the $r_3$ sum in \eqref{mathfrak S} will not affect the location of the possible poles coming from the sums over $c$ and $d'$. This follows from writing
 \begin{align*}
 &\sum_{\substack{r_3\\(r_3,cd')=1}}\frac{1}{N(r_3)^{3s-3z+3}}\prod_{\pi\mid r_3}\left(1+\frac{1}{N(\pi)} \right)^{-1}\\
 &=\prod_{\substack{\pi\equiv 1(\mathrm{mod}\ 3)\\\pi\nmid cd'}}\left(1+\left(1+\frac{1}{N(\pi)}\right)^{-1}\left(\frac{1}{N(\pi)^{3s-3z+3}}+\frac{1}{N(\pi^2)^{3s-3z+3}}+O\left(N(\pi)^{-6s+6z-6}\right)\right)\right)\\
 &=\mathcal{E}(s,z)\prod_{\substack{\pi\equiv 1(\mathrm{mod}\ 3)\\\pi|cd'}}\left(1+\frac{N(\pi)}{\left(N(\pi)+1\right)\left(N(\pi)^{3s-3z+3}-1\right)}\right)^{-1},
 \end{align*}
 where
\begin{align*}
\mathcal{E}(s,z)&:=\prod_{\pi\equiv 1(\mathrm{mod}\ 3)}\left(1+\frac{N(\pi)}{\left(N(\pi)+1\right)\left(N(\pi)^{3s-3z+3}-1\right)}\right)\\
&=\left(1-\frac{1}{3^{3s-3z+3}}\right)\zeta_{\mathbb{Q}(\omega)}(3s-3z+3)\prod_{\pi\equiv 1(\mathrm{mod}\ 3)}\left(1+O\left(N(\pi)^{-3s+3z-4}\right)\right)
\end{align*}
is analytic for $\Re(s)>z-1$ and has a pole at $s=z-2/3$. Since $(d',c)=1$, we can thus write
\begin{align*}
\mathcal{F}_z(s)&=\frac{1}{s}\frac{\pi^{2z+\frac{2}{3}} \Gamma(1-z)G(s)g_{1-z}(s)}{2^{\frac{7}{3}-2z}3^{z+3}\Gamma(2/3)\Gamma(z)\zeta_{\mathbb{Q}(\omega)}(2)}\mathcal{E}(s,z)\sum_{k\geq 0}3^{k(-s-1+z)+s/2}\sum_{a,b=0}^2\delta_{a,b}\nonumber\\
&\times\sum_{c}\frac{\mu^2(c)}{N(c)^{2\bar{s}+s-3z+4}\sigma(c)}\prod_{\pi|c}\left(1+\frac{N(\pi)}{\left(N(\pi)+1\right)\left(N(\pi)^{3s-3z+3}-1\right)}\right)^{-1}\nonumber\\
\end{align*}
\begin{align}\label{mathfrak S2}
&\times \prod_{\pi|c}\left(1-N(\pi)^{-3\bar{s}+3z-3}\right)^{-1} \sum_{d'}\frac{\mu(d')g(\omega^a\lambda^bd',c)\overline{g(\omega^a\lambda^b,cd')}}{N(d')^{s-z+\frac{5}{3}}\sigma(d')}\overline{\psi(\omega^a\lambda^bd',s-z+5/3)}\nonumber\\
&\times\prod_{\pi|d'}\left(\left(1-N(\pi)^{-3\bar{s}+3z-3}\right)\left(1+\frac{N(\pi)}{\left(N(\pi)+1\right)\left(N(\pi)^{3s-3z+3}-1\right)}\right)\right)^{-1}.
\end{align}
  Now from Lemma \ref{Patterson}, we know that $\overline{\psi(\omega^a\lambda^bd',s-z+5/3)}$ has a simple pole at $s=z-1/3$. There are also two other simple poles: $s=z-2/3$ that comes from $\mathcal{E}(s,z)$, and the $s=0$ pole. The last possible poles may come from the sums over $d'$ and $c$ in \eqref{mathfrak S2}. We claim that the sum over $d'$ is absolutely convergent for $\Re(s)>z-\frac{5}{6}+\epsilon$. To see this, we use the convexity bound of Lemma \ref{Patterson2} that works in the range $1+\epsilon\leq \Re(s)-z+\frac{5}{3}\leq \frac{4}{3}-\epsilon$, to bound the $d'$ sum in \eqref{mathfrak S2} as follows: bound $\psi$ by the convexity bound, bound $\sigma(d')$ by $\sigma(d')>N(d')$, and bound everything else trivially, to get
  \begin{align*}
  &\sum_{d'}\frac{\mu(d')g(\omega^a\lambda^bd',c)\overline{g(\omega^a\lambda^b,cd')}}{N(d')^{s-z+\frac{5}{3}}\sigma(d')}\prod_{\pi\mid d'}\left(1+\frac{N(\pi)}{\left(N(\pi)+1\right)\left(N(\pi)^{3s-3z+3}-1\right)}\right)^{-1}\\
  & \hspace{3.5cm}\times\prod_{\pi\mid d'}\left(1-N(\pi)^{-3\bar{s}+3z-3}\right)^{-1}\overline{\psi(\omega^a\lambda^bd',s-z+5/3)}\\
  &\ll \sum_{d'}\frac{1}{N(d')^{\frac{3}{2}(\Re(s)-z)+\frac{9}{4}+\epsilon}},
  \end{align*}
  for any $\epsilon>0$. Similarly, we can see that the sum over $c$ in \eqref{mathfrak S2} is absolutely convergent for $\Re(s)>z-1$. This completes the proof.
 \end{proof}
 
  In light of Lemma \ref{poles of integrand} we can now move the line of integration in (\ref{int of MT(z)}) depending on the range of $z$.

 \underline{If $\frac{1}{3}<z<\frac{2}{3}$}, we move the contour in (\ref{int of MT(z)}) to $s=z-\frac{2}{3}+\epsilon$ to cross the two simple poles at $s=z-1/3$ and $s=0$ only. The new line contributes $O(Q^{4/3-z}B^{z-2/3+\epsilon})$, while the poles give 
 $$D'_z\widetilde{w}\left(4/3-z\right)Q^{4/3-z}B^{z-1/3}+E'_z\widetilde{w}\left(4/3-z\right)Q^{4/3-z},$$
 for constants $D_z':=\lim_{s\to z-1/3}(s-z+1/3)\,\mathcal{F}_z(s)$, and
 \begin{align}\label{E'_z}
E'_z:=\lim_{s\to 0}s\,\mathcal{F}_z(s).
\end{align} 
  Using \eqref{2nd application of Thm} in \eqref{exchanging r_1,d} and then \eqref{exchanging r_1,d} in \eqref{int of  MT(z)}, we obtain
  \begin{align}\label{formula for D'_z}
  D_z'&=\frac{\pi^{2z+\frac{2}{3}} \Gamma(1-z)Q^{\frac{4}{3}-z}\widetilde{w}\left(\frac{4}{3}-z\right)}{2^{\frac{7}{3}-2z}3^{z+3}\Gamma(2/3)\Gamma(z)\zeta_{\mathbb{Q}(\omega)}(2)}\sum_{a,b=0}^2\delta_{a,b}\sum_{k\geq 0}3^{\frac{-2k}{3}+\frac{z}{2}-\frac{1}{6}}\frac{G(z-1/3)}{z-1/3}g_{1-z}(z-1/3)\nonumber\\
  &\times  \sum_{r_3}\frac{1}{N(r_3)^{2}}\prod_{\pi\mid r_3}\left(1+\frac{1}{N(\pi)} \right)^{-1}\sum_{\substack{(d,r_3)=1}}\frac{\mu(d)\overline{g(\omega^a\lambda^b,d)}}{N(d)^{4/3}\sigma(d)}\prod_{\pi\mid d}\left(1-N(\pi)^{-2}\right)^{-1}\nonumber\\
  &\times \sum_{c|d}\frac{\mu(c)g(\omega^a\lambda^bd/c,c)}{N(c)^{5/3}}\lim_{s\to z-\frac{1}{3}}\left(s-z+\frac{1}{3}\right)\overline{\psi(\omega^a\lambda^bd/c,s-z+5/3)}.
  \end{align}
  
\begin{lemma} For any $z\in [0,1]-\{\frac{1}{3}\}$, the constant $D_z'$ given in \eqref{formula for D'_z} can be written as
  \begin{align}\label{Cubic D'_z}
D'_z&=\frac{\pi^{2z+\frac{7}{3}}\Gamma(1-z)g_{1-z}(z-1/3)}{2^{\frac{11}{3}-2z}3^{\frac{z}{2}+\frac{14}{3}}\Gamma^2(2/3)\Gamma(z)\zeta_{\mathbb{Q}(\omega)}^2(2)}\sum_{k\geq 0}3^{-2k/3}\sum_{a,b=0}^2|\delta_{a,b}|^2\nonumber\\
&\times\frac{G(z-1/3)}{z-1/3}\prod_{\pi\equiv 1(\mathrm{mod}\ 3)}\left(\frac{N(\pi)(N(\pi)^2+N(\pi)-1)}{(N(\pi)-1)(N(\pi)+1)^2} \right).
\end{align}
\end{lemma}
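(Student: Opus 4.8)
The plan is to compute the residue $D_z' = \lim_{s\to z-1/3}(s-z+\tfrac13)\,\mathcal{F}_z(s)$ starting from \eqref{formula for D'_z} and simplifying in three passes: replace the limit of $\overline{\psi}$ via Lemma~\ref{Patterson}; substitute the explicit form \eqref{tau(r)} of the metaplectic coefficients; and use the multiplicativity of Gauss sums from Lemma~\ref{properties of g} to collapse the surviving divisor sums into the Euler product, tracking constants throughout. For the first pass, note that for real $z\in[0,1]$ the symbol $\overline{\psi(r,s-z+5/3)}$ in \eqref{formula for D'_z} stands for $\overline{\psi(r,\bar s-z+5/3)}$, a function meromorphic in $s$ with a single simple pole at the real point $s=z-1/3$ (where the argument of $\psi$ reaches $4/3$); Lemma~\ref{Patterson} and $c_0\in\mathbb{R}$ then give
\[\lim_{s\to z-1/3}\Big(s-z+\tfrac13\Big)\,\overline{\psi\big(\omega^a\lambda^b d/c,\,s-z+\tfrac53\big)}=\frac{c_0\,\overline{\tau(\omega^a\lambda^b d/c)}}{N(\omega^a\lambda^b d/c)^{1/6}}.\]

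For the second pass, observe that $d$ in \eqref{formula for D'_z} is primary and squarefree (because of $\mu(d)$), so $c\mid d$ and $d/c$ are primary, squarefree and coprime, whence $\omega^a\lambda^b d/c$ has $\lambda$-adic valuation $b$ and primary squarefree non-$\lambda$ part $d/c$. I would then read off \eqref{tau(r)} that $\tau(\omega^a\lambda^b d/c)$ vanishes unless $(a,b)\in\{(0,0),(0,2),(1,2),(2,2)\}$ --- exactly the support of $\delta_{a,b}$ --- and that for those pairs $\tau(\omega^a\lambda^b d/c)=\mathrm{ph}_{a,b}\cdot 3^3\cdot\overline{g(\omega^a\lambda^b,d/c)}\cdot N(d/c)^{-1/2}$, with phases $\mathrm{ph}_{a,b}\in\{1,1,e^{-2\pi i/9},e^{2\pi i/9}\}$ and the common numerical factor $3^3$ arising as $3^{n/2+2}$ with $n=2$ in cases~1--3 and as $3^{(n+5)/2}$ with $n=1$ in case~4 of \eqref{tau(r)}. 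Since $N(\omega^a\lambda^b d/c)^{1/6}=3^{b/6}N(d/c)^{1/6}$ and $\delta_{a,b}=\mathrm{ph}_{a,b}\,3^{-b/6}$, the residue contributes $c_0\,3^3\,\overline{\delta_{a,b}}\,g(\omega^a\lambda^b,d/c)\,N(d/c)^{-2/3}$ to the $(a,b)$-term (the conjugate $\overline{\delta_{a,b}}$ coming from $\overline{\tau}$). Multiplied against the $\delta_{a,b}$ already present in \eqref{formula for D'_z} this produces $|\delta_{a,b}|^2$, while $c_0\,3^3$ times the constant prefactor of \eqref{formula for D'_z} --- after pulling the $k$-independent $3^{z/2-1/6}$ out of the $k$-sum --- equals, by a direct computation with $\pi$-, $2$- and $3$-powers, precisely the constant in \eqref{Cubic D'_z}.

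For the third pass, the $(a,b)$-summand now carries the three Gauss sums $\overline{g(\omega^a\lambda^b,d)}$, $g(\omega^a\lambda^b d/c,c)$ and $g(\omega^a\lambda^b,d/c)$; writing $d=(d/c)\cdot c$ with coprime factors, \eqref{2} gives $g(\omega^a\lambda^b,d)=g(\omega^a\lambda^b d/c,c)\,g(\omega^a\lambda^b,d/c)$, so the product of all three is $|g(\omega^a\lambda^b d/c,c)|^2\,|g(\omega^a\lambda^b,d/c)|^2=N(c)\,N(d/c)=N(d)$ (using $|g(\nu,n)|=N(n)^{1/2}$ for squarefree $n$ coprime to $\nu$), independent of $c$. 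Feeding $N(d)$ in, the $c$-sum is $N(d)^{-2/3}\sum_{c\mid d}\mu(c)N(c)^{-1}=N(d)^{-2/3}\prod_{\pi\mid d}(1-N(\pi)^{-1})$; combined with $\sigma(d)^{-1}=\prod_{\pi\mid d}(1+N(\pi))^{-1}$ and $\prod_{\pi\mid d}(1-N(\pi)^{-2})^{-1}$, the $d$-summand collapses to $\mu(d)\prod_{\pi\mid d}(N(\pi)+1)^{-2}$, so $\sum_{(d,r_3)=1}$ gives $\prod_{\pi\equiv1(\mathrm{mod}\ 3),\,\pi\nmid r_3}\big(1-(N(\pi)+1)^{-2}\big)$. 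The remaining $r_3$-sum $\sum_{r_3}N(r_3)^{-2}\prod_{\pi\mid r_3}(1+N(\pi)^{-1})^{-1}$ then assembles prime by prime --- the local factor at $\pi$ being $\big(1-(N(\pi)+1)^{-2}\big)\big(1+(1+N(\pi)^{-1})^{-1}(N(\pi)^2-1)^{-1}\big)$ --- into $\prod_{\pi\equiv1(\mathrm{mod}\ 3)}\frac{N(\pi)(N(\pi)^2+N(\pi)-1)}{(N(\pi)-1)(N(\pi)+1)^2}$. Collecting the prefactor from the second pass, the surviving $\sum_{k\ge0}3^{-2k/3}$, the factor $\frac{G(z-1/3)}{z-1/3}g_{1-z}(z-1/3)$, the sum $\sum_{a,b}|\delta_{a,b}|^2$ and this Euler product gives \eqref{Cubic D'_z}.

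The main obstacle is the constant bookkeeping in the second pass: one must verify that the phase-times-power-of-$3$ factor extracted from \eqref{tau(r)} coincides exactly with $\delta_{a,b}$, so that the $\delta_{a,b}$ of \eqref{formula for D'_z} turns into $|\delta_{a,b}|^2$ rather than something off by a ninth root of unity, and that reading $\overline{\psi(\cdot,s-z+5/3)}$ as $\overline{\psi(\cdot,\bar s-z+5/3)}$ introduces no spurious sign when taking the residue at $s=z-1/3$. The rest --- the identity $g(\omega^a\lambda^b,d)=g(\omega^a\lambda^b d/c,c)\,g(\omega^a\lambda^b,d/c)$ and the per-prime Euler-product computation --- is elementary but must be carried out carefully, since each local factor is built from several pieces.
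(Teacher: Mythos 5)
Your strategy is the same as the paper's: apply Lemma~\ref{Patterson} and the explicit form \eqref{tau(r)} to compute the residue limit, combine the three Gauss sums via $g(\omega^a\lambda^b,d)=g(\omega^a\lambda^b d/c,c)\,g(\omega^a\lambda^b,d/c)$ so their product gives $N(d)$, and assemble the remaining $c$-, $d$-, $r_3$-sums into an Euler product while tracking the numerical constants. The first two passes are carried out correctly; in particular, your identification $\tau(\omega^a\lambda^b d/c)=\mathrm{ph}_{a,b}\,3^3\,\overline{g(\omega^a\lambda^b,d/c)}\,N(d/c)^{-1/2}$ combined with $N(\omega^a\lambda^b d/c)^{1/6}=3^{b/6}N(d/c)^{1/6}$ and $\delta_{a,b}=\mathrm{ph}_{a,b}3^{-b/6}$ reproduces the paper's \eqref{residue formula}, and the power-of-$2$, -$3$, -$\pi$, $\Gamma$-, and $\zeta$-bookkeeping matches \eqref{Cubic D'_z}.

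However, there is a slip in the third pass. When you assemble the $d$- and $r_3$-sums into a single Euler product, the local factor at a prime $\pi$ should be a \emph{sum} of the two coprimality contributions (the $\pi\nmid r_3$ case, where only the $d$-product contributes, plus the $\pi\mid r_3$ case, where only the $r_3$-weight contributes), namely
\[
\Bigl(1-(N(\pi)+1)^{-2}\Bigr)+\bigl(1+N(\pi)^{-1}\bigr)^{-1}\bigl(N(\pi)^2-1\bigr)^{-1}=\frac{N(\pi)\bigl(N(\pi)^2+N(\pi)-1\bigr)}{(N(\pi)-1)(N(\pi)+1)^2},
\]
not the \emph{product} $\bigl(1-(N(\pi)+1)^{-2}\bigr)\bigl(1+(1+N(\pi)^{-1})^{-1}(N(\pi)^2-1)^{-1}\bigr)$ you wrote, which evaluates to $\frac{N(\pi)(N(\pi)+2)(N(\pi)^3+N(\pi)^2-1)}{(N(\pi)+1)^4(N(\pi)-1)}$ and does not equal the local factor of \eqref{Cubic D'_z}. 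The conclusion you draw is the correct one, so this is an arithmetic error rather than a structural flaw, but as written the per-prime assembly does not reproduce the claimed product. The paper avoids this by first pulling out the full $d$-product $\prod_\pi(1-(N(\pi)+1)^{-2})$, then computing the surviving $r_3$-sum $\sum_{r_3}N(r_3)^{-2}\prod_{\pi\mid r_3}(1-(N(\pi)+2)^{-1})$ via a Dirichlet convolution, and only then multiplies the two Euler products; you should either adopt that grouping or correct the local factor to the sum above.
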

\begin{proof}
 Using Lemma \ref{Patterson} once again, we have that
 \begin{equation}\label{residue formula}
\lim_{s\to z-\frac{1}{3}}\left(s-z+\frac{1}{3}\right)\overline{\psi(\omega^a\lambda^bd/c,s-z+5/3)}= \frac{(2\pi)^{5/3}\overline{\delta_{a,b}}g(\omega^a\lambda^b,d/c)}{3^{3/2}8\Gamma(2/3)\zeta_{\mathbb{Q}(\omega)}(2)}N(d/c)^{-2/3}.
 \end{equation}
The latter Gauss sum combines nicely with $g(\omega^a\lambda^bd/c,c)$ in \eqref{formula for D'_z} to give
 \[g(\omega^a\lambda^bd/c,c)g(\omega^a\lambda^b,d/c)=g(\omega^a\lambda^b,d),\]
 which is surprisingly just the conjugate of the remaining Gauss sum $g(\omega^a\lambda^b,d)$ in \eqref{formula for D'_z}, and so they multiply neatly to give their size $N(d)$. Now we collect the terms related to $d$ in \eqref{formula for D'_z} and write them as a product. We have
\begin{align}\label{d sum as product}
&\sum_{(d,r_3)=1}\frac{\mu(d)}{N(d)\sigma(d)}\prod_{\pi\mid d}\left(1-N(\pi)^{-2}\right)^{-1}\sum_{c|d}\frac{\mu(c)}{N(c)}\nonumber\\
&=\prod_{\pi\equiv 1(\mathrm{mod}\ 3)}\left(1-\frac{1}{(N(\pi)+1)^2}\right)\prod_{\pi|r_3}\left(1-\frac{1}{(N(\pi)+1)^2}\right)^{-1}.
\end{align}
The product over $\pi|r_3$ in \eqref{d sum as product} combined with the terms related to $r_3$ in \eqref{formula for D'_z} give
\begin{align*}
&\sum_{r_3}\frac{1}{N(r_3)^2}\prod_{\pi|r_3}\left(1+\frac{1}{N(\pi)}\right)^{-1}\prod_{\pi|r_3}\left(1-\frac{1}{(N(\pi)+1)^2}\right)^{-1}=\sum_{r_3}\frac{1}{N(r_3)^2}\prod_{\pi|r_3}\left(1-\frac{1}{N(\pi)+2}\right).
\end{align*}
Now write
\[\prod_{\pi|r_3}\left(1-\frac{1}{N(\pi)+2}\right)=\sum_{d|r_3}\frac{\mu(d)}{f(d)},\] where $f$ is a multiplicative function that satisfies $f(\pi)=N(\pi)+2$ for any prime $\pi\in \mathbb{Z}[\omega]$. Therefore, the sum over $r_3$ finally equals
\begin{align*}
\sum_{r_3}\frac{1*\frac{\mu}{f}(r_3)}{N(r_3)^2}&=\prod_{\pi\equiv 1(\mathrm{mod}\ 3)}\left(1-\frac{1}{N(\pi)^2}\right)^{-1}\prod_{\pi\equiv 1(\mathrm{mod}\ 3)}\left(1-\frac{1}{N(\pi)^2(N(\pi)+2)}\right)\\
&=\prod_{\pi\equiv 1(\mathrm{mod}\ 3)}\left(\frac{N(\pi)^2+N(\pi)-1}{(N(\pi)-1)(N(\pi)+2)}\right).
\end{align*}
Combining this product with the remaining one in \eqref{d sum as product}, we obtain that the full product that results from all this computation is
\[\prod_{\pi\equiv 1(\mathrm{mod}\ 3)}\left(\frac{N(\pi)(N(\pi)^2+N(\pi)-1)}{(N(\pi)-1)(N(\pi)+1)^2} \right).\]
Using this, and collecting the remaining factors in \eqref{residue formula}, and then replacing in \eqref{formula for D'_z}, we obtain the desired form \eqref{Cubic D'_z} in the Lemma. 
\end{proof}

 \underline{If $0\leq z<\frac{1}{3}$}, we now have that the two poles satisfy $z-1/3<0$. In this case, when moving the contour in (\ref{int of MT(z)}) to $s=z-\frac{2}{3}+\epsilon$, the main and secondary terms will switch. That is, we get the asymptotic
 $$E'_z\widetilde{w}\left(4/3-z\right)Q^{4/3-z}+D'_z\widetilde{w}\left(4/3-z\right)Q^{4/3-z}B^{z-1/3}+O(Q^{4/3-z}B^{z-2/3+\epsilon}).$$

  \underline{If $\frac{2}{3}\leq z\leq 1$}, we can only capture the $s=z-1/3$ pole since $0<z-2/3<z-1/3$. We again move the contour to $s=z-\frac{2}{3}+\epsilon$ to get 
  $$D'_z\widetilde{\omega}\left(4/3-z\right)Q^{4/3-z}B^{z-1/3}+O(Q^{4/3-z}B^{z-2/3+\epsilon}).$$
  
 Finally, \underline{at $z=1/3$}, we move the line of integration in (\ref{int of MT(z)}) to $s=-1/3+\epsilon$ to cross a double pole at $s=0$ now. The new line contributes $O(QB^{-1/3+\epsilon})$, while the double pole gives
 \[C_{1/3}'\widetilde{w}(1)Q\log B+C_2'\widetilde{w}(1)Q \]
 which comes from the evaluation
\[Q\widetilde{w}(1)\lim_{s\to 0}s^2\frac{d}{ds}\left(\mathcal{F}_{1/3}(s)B^s\right)=Q\widetilde{w}(1)\left(\lim_{s\to 0}s^2\mathcal{F}_{1/3}(s)\log B+\lim_{s\to 0}s^2\mathcal{F}'_{1/3}(s)\right). \]
This implies that $C_{1/3}'=\lim_{s\to 0}s^2\mathcal{F}_{1/3}(s)\log B$ and
\begin{align}\label{Cubic C_2'}
C'_2&=\lim_{s\to 0}s^2\mathcal{F'}_{1/3}(s).
\end{align}
The computation of the constant $C_{1/3}'$ follows similarly as that of $D_z'$. In fact, the latter computation remains valid but with $z$ replaced by $1/3$. In addition using the reflection formula $\Gamma(2/3)\Gamma(1/3)=\frac{2\pi}{\sqrt{3}}$, we get that the constant equals
\begin{align}\label{Cubic C'_1/3}
C_{1/3}'&=\frac{\pi^{2}}{2^{4}3^{\frac{13}{3}}\zeta_{\mathbb{Q}(\omega)}^2(2)}\sum_{k\geq 0}3^{-2k/3}\sum_{a,b=0}^2|\delta_{a,b}|^2\prod_{\pi\equiv 1(\mathrm{mod}\ 3)}\left(\frac{N(\pi)(N(\pi)^2+N(\pi)-1)}{(N(\pi)-1)(N(\pi)+1)^2} \right).
\end{align}
Finally, we collect everything we proved above in the following theorem.

\begin{theorem}\label{M_2'} Let $\epsilon>0$ and $D'_z,E'_z,C_{1/3}'$ and $C_2'$ be the constants given for $z\in [0,1]$ in \eqref{Cubic D'_z}, \eqref{E'_z}, \eqref{Cubic C'_1/3} and \eqref{Cubic C_2'} respectively. Then the following are true.

If $z\in(1/3,2/3)$, we have
 \[\mathcal{M}_2'(z)=D'_z\widetilde{w}\left(4/3-z\right)Q^{4/3-z}B^{z-1/3}+E'_z\widetilde{w}\left(4/3-z\right)Q^{4/3-z}+O(Q^{4/3-z}B^{z-2/3+\epsilon}). \]

If $z\in[2/3,1]$,
 \[\mathcal{M}_2'(z)=D'_z\widetilde{w}\left(4/3-z\right)Q^{4/3-z}B^{z-1/3}+O(Q^{4/3-z}B^{z-2/3+\epsilon}). \]
 
 If $z\in[0,1/3)$,
 \[\mathcal{M}_2'(z)=E'_z\widetilde{w}\left(4/3-z\right)Q^{4/3-z}+D'_z\widetilde{w}\left(4/3-z\right)Q^{4/3-z}B^{z-1/3}+O(Q^{4/3-z}B^{z-2/3+\epsilon}). \]

  At $z=1/3$,
\[\mathcal{M}_2'(1/3)=C_{1/3}'\widetilde{\omega}(1)Q\log B+C_2'\widetilde{\omega}(1)Q+O(QB^{-1/3+\epsilon}). \]
\end{theorem}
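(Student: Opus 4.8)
\emph{Proof plan.}
The plan is to run the contour shift already set up above and read off the four outcomes. Recall that substituting Theorem \ref{Removing coprimality} into \eqref{mathcal G}, then into \eqref{M_2 tilde}, and writing $V_{1-z}$ back in its Mellin form gives the representation
\[
\mathcal{M}_2'(z)=Q^{4/3-z}\,\widetilde{w}\!\left(\tfrac43-z\right)\,\frac{1}{2\pi i}\int_{(3)}B^{s}\,\mathcal{F}_z(s)\,ds,
\]
with $\mathcal{F}_z$ as in \eqref{mathfrak S2}. By Lemma \ref{poles of integrand} the integrand $B^{s}\mathcal{F}_z(s)$ continues meromorphically to $\Re(s)>z-\tfrac23$, with poles only at $s=z-\tfrac13$ (coming from $\psi$ via Lemma \ref{Patterson}), at $s=z-\tfrac23$ (from the zeta factor $\mathcal{E}(s,z)$), and at $s=0$ (from the $1/s$), all simple unless $z\in\{\tfrac13,\tfrac23\}$, when $s=0$ merges with one of the others into a double pole. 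I would then push the line of integration from $\Re(s)=3$ down to $\Re(s)=z-\tfrac23+\epsilon$ --- to $\Re(s)=-\tfrac13+\epsilon$ in the case $z=\tfrac13$ --- collecting the residues at the poles crossed. The shift is legitimate because on every vertical line the factor $g_{1-z}(s)$ carries the Stirling decay $e^{-\pi|t|/2}$, which dominates both the at-most-polynomial growth of $\psi$ provided by Lemma \ref{Patterson2} and the boundedness of $G$, while the $k$-sum $\sum_{k\ge0}3^{k(-s-1+z)+s/2}$ converges geometrically throughout the strip swept out, its ratio having modulus $3^{\,z-1-\Re(s)}<1$.

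The four cases are then just a count of which poles lie to the right of the new line. For $\tfrac13<z<\tfrac23$ the poles at $s=z-\tfrac13>0$ and $s=0$ are crossed while $s=z-\tfrac23<0$ is not, producing the two residue terms $D_z'\widetilde{w}(\tfrac43-z)Q^{4/3-z}B^{z-1/3}$ and $E_z'\widetilde{w}(\tfrac43-z)Q^{4/3-z}$, where $D_z'=\lim_{s\to z-1/3}(s-z+\tfrac13)\mathcal{F}_z(s)$ and $E_z'$ is as in \eqref{E'_z}. For $\tfrac23\le z\le1$ the pole at $s=0$ has moved to the left of the new line, so only the $D_z'$-term survives. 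For $0\le z<\tfrac13$ both residues are still collected, but now $z-\tfrac13<0$, so $B^{z-1/3}$ is a decaying factor and the $E_z'$-term is the dominant one --- the asserted switching. At $z=\tfrac13$ the double pole at $s=0$ contributes, by the standard two-term Laurent extraction of $s^{2}\mathcal{F}_{1/3}(s)B^{s}$ near $s=0$, the pair $C_{1/3}'\widetilde{w}(1)Q\log B+C_2'\widetilde{w}(1)Q$ with $C_2'$ as in \eqref{Cubic C_2'}. The closed forms \eqref{Cubic D'_z} and \eqref{Cubic C'_1/3} for $D_z'$ and $C_{1/3}'$ then come out exactly as in the lemmas already proved: evaluate $\mathrm{Res}_{s=4/3}\psi$ via Lemma \ref{Patterson}, use the multiplicativity relation \eqref{2} to collapse the three Gauss sums $g(\omega^a\lambda^b d/c,c)$, $g(\omega^a\lambda^b,d/c)$ and $g(\omega^a\lambda^b,d)$ into $N(d)$ (the residue formula \eqref{residue formula} supplying an extra $\overline{\delta_{a,b}}$, so that the $(a,b)$-sum collapses to $\sum_{a,b}|\delta_{a,b}|^2$), and fold the remaining $d$-, $c$- and $r_3$-sums into Euler products.

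It remains to estimate the integral on the shifted line. There the argument $s-z+\tfrac53$ of $\psi$ has real part $1+\epsilon$, so the convexity bound of Lemma \ref{Patterson2} applies; bounding $\sigma(d')>N(d')$, the Gauss sums trivially by $N(\cdot)^{1/2}$, everything else trivially, and absorbing the surviving power of $1+|t|$ against the Gamma decay, the $d'$-, $c$- and $r_3$-sums converge absolutely (this is precisely what was checked inside the proof of Lemma \ref{poles of integrand}), and the $k$-sum again contributes a convergent geometric factor, so the whole integral is $O(Q^{4/3-z}B^{z-2/3+\epsilon})$ when $z\ne\tfrac13$ and $O(QB^{-1/3+\epsilon})$ when $z=\tfrac13$. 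Assembling the residues and this error in each of the four ranges gives the four displayed asymptotics. The one genuinely delicate input here is Lemma \ref{poles of integrand} itself --- the claim that the $c$- and $d'$-sums contribute no further poles in $\Re(s)>z-\tfrac23$, so that the shift really is obstructed only at $s=z-\tfrac23$; granting that, the present theorem is pure bookkeeping, which is exactly why it is phrased as a summary of the four ranges already treated above.
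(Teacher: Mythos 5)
Your proof is correct and follows the paper's own argument essentially verbatim: you reuse the Mellin representation $\mathcal{M}_2'(z)=Q^{4/3-z}\widetilde{w}(4/3-z)\frac{1}{2\pi i}\int_{(3)}B^s\mathcal{F}_z(s)\,ds$, invoke Lemma \ref{poles of integrand} for the pole locations, shift to $\Re(s)=z-\tfrac23+\epsilon$, collect the residues at $s=z-\tfrac13$ and $s=0$ (merging into a double pole at $z=\tfrac13$), identify the residues with $D_z'$, $E_z'$, $C_{1/3}'$, $C_2'$, and bound the shifted line via Lemma \ref{Patterson2}, exactly as the paper does. Your remarks on the geometric ratio $3^{z-1-\Re(s)}<1$ of the $k$-sum and the Stirling decay of $g_{1-z}$ just make explicit the convergence checks the paper leaves implicit.
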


It remains to show how these constants are related to the ones from the principal sum. This is done in the next theorem.
\begin{theorem}\label{Constants}
Let $C_{1/3},C_{1/3}',D_z$ and $D_z'$ be the constants given by \eqref{Cubic C_1/3}, \eqref{Cubic C'_1/3}, \eqref{Cubic D_z}, and \eqref{Cubic D'_z} respectively  for any $z\in[0,1]$. Then $C_{1/3}'=C_{1/3}$ and $D_z'=-D_z$.
\end{theorem}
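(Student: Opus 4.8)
The plan is to verify both identities by directly comparing the closed forms in \eqref{Cubic C_1/3}, \eqref{Cubic C'_1/3}, \eqref{Cubic D_z} and \eqref{Cubic D'_z}: everything reduces to bookkeeping with powers of $2$ and $3$, the gamma reflection formula, and the precise shape of two Euler products, and no new analytic input is required. First I would dispose of the two arithmetic sums occurring only on the dual side. The geometric sum is $\sum_{k\geq 0}3^{-2k/3}=(1-3^{-2/3})^{-1}$, and from the explicit values of $\delta_{a,b}$ (only $(a,b)\in\{(0,0),(0,2),(1,2),(2,2)\}$ contribute, with $|\delta_{0,0}|^2=1$ and $|\delta_{0,2}|^2=|\delta_{1,2}|^2=|\delta_{2,2}|^2=3^{-2/3}$) one gets $\sum_{a,b=0}^2|\delta_{a,b}|^2=1+3\cdot 3^{-2/3}=1+3^{1/3}$. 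I would then record the elementary identity
\[\frac{1+3^{1/3}}{1-3^{-2/3}}=\frac{(1+3^{1/3})\,3^{2/3}}{(3^{1/3}-1)(3^{1/3}+1)}=\frac{3^{2/3}}{3^{1/3}-1}=3^{1/3}\cdot\frac{1}{1-3^{-1/3}},\]
so the extra factors on the dual side collapse to $3^{1/3}$ times the factor $\tfrac{1}{1-3^{-1/3}}$ that appears on the principal side.

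Next I would simplify the Euler products. Writing $1-\tfrac{1}{N(\pi)(N(\pi)+1)}=\tfrac{N(\pi)^2+N(\pi)-1}{N(\pi)(N(\pi)+1)}$, the quotient of the general factor of the product in \eqref{Cubic D'_z}/\eqref{Cubic C'_1/3} by that in \eqref{Cubic D_z}/\eqref{Cubic C_1/3} is $\tfrac{N(\pi)^2}{(N(\pi)-1)(N(\pi)+1)}=(1-N(\pi)^{-2})^{-1}$, so
\[\prod_{\pi\equiv 1\,(3)}\frac{N(\pi)(N(\pi)^2+N(\pi)-1)}{(N(\pi)-1)(N(\pi)+1)^2}=\Big(\prod_{\pi\equiv 1\,(3)}(1-N(\pi)^{-2})^{-1}\Big)\prod_{\pi\equiv 1\,(3)}\Big(1-\tfrac{1}{N(\pi)(N(\pi)+1)}\Big).\]
Since the primary primes $\pi\equiv 1\pmod 3$ are exactly generators of the prime ideals of $\mathbb{Z}[\omega]$ coprime to $\lambda$ and $N(\lambda)=3$, the first product on the right is $(1-3^{-2})\zeta_{\mathbb{Q}(\omega)}(2)=\tfrac{8}{9}\zeta_{\mathbb{Q}(\omega)}(2)$; this cancels one of the two copies of $\zeta_{\mathbb{Q}(\omega)}(2)$ in the denominators of \eqref{Cubic D'_z}/\eqref{Cubic C'_1/3} and leaves exactly the product from the principal side.

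For the archimedean factor of $D_z'$, since $g_{1-z}(u)=(2\pi)^{-u}\Gamma(1-z+u)/\Gamma(1-z)$ (Proposition \ref{AFE}) one has $\Gamma(1-z)\,g_{1-z}(z-1/3)=(2\pi)^{1/3-z}\Gamma(2/3)$, so one $\Gamma(2/3)$ cancels against $\Gamma^2(2/3)$; the reflection formula $\Gamma(1/3)\Gamma(2/3)=\tfrac{2\pi}{\sqrt3}$ then turns the surviving $1/\Gamma(2/3)$ into $\tfrac{\sqrt3\,\Gamma(1/3)}{2\pi}$, producing the $\Gamma(1/3)$ and the shifted power of $\pi$ in \eqref{Cubic D_z}. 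Feeding in the two previous steps and collecting powers of $2$, $3$ and $\pi$, both $D_z'$ and $-D_z$ reduce to
\[\frac{2^{z-4/3}\pi^{z+5/3}\Gamma(1/3)}{3^{11/2+z/2}\,\Gamma(z)\,\zeta_{\mathbb{Q}(\omega)}(2)\,(3^{1/3}-1)}\prod_{\pi\equiv 1\,(3)}\Big(1-\tfrac{1}{N(\pi)(N(\pi)+1)}\Big)\]
times $\tfrac{G(z-1/3)}{z-1/3}$ for $D_z'$ and times $\tfrac{G(1/3-z)}{1/3-z}$ for $D_z$; as $G$ is even these differ only by a sign, giving $D_z'=-D_z$. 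For $C_{1/3}'=C_{1/3}$ the same computation applies with $z$ replaced by $1/3$: there is now no $G(z-1/3)/(z-1/3)$-type factor, hence no sign, and after the two steps above and the reflection formula both sides collapse to
\[\frac{\pi^2}{2\cdot 3^{17/3}\,\zeta_{\mathbb{Q}(\omega)}(2)\,(3^{1/3}-1)}\prod_{\pi\equiv 1\,(3)}\Big(1-\tfrac{1}{N(\pi)(N(\pi)+1)}\Big).\]

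The only real obstacle is bookkeeping: tracking the many fractional powers of $2$ and $3$, applying the gamma reflection at the right place, and keeping the two Euler products exactly aligned. There is no conceptual difficulty, but the arithmetic must be done with care, since Theorem \ref{Cubic} — and in particular the appearance of the $Q\log Q$ main term at $z=1/3$ and of the secondary terms for $z\neq 1/3$ — rests on the exact coincidence (up to sign) of these a priori unrelated constants coming from the principal and dual sums.
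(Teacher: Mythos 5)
Your proposal is correct and follows essentially the same route as the paper's own proof: you carry out the identical simplification of the $k$- and $\delta_{a,b}$-sums to $3^{1/3}/(1-3^{-1/3})$, the same Euler-product identity relating $\prod\frac{N(\pi)(N(\pi)^2+N(\pi)-1)}{(N(\pi)-1)(N(\pi)+1)^2}$ to $\frac{8}{9}\zeta_{\mathbb{Q}(\omega)}(2)\prod(1-\tfrac{1}{N(\pi)(N(\pi)+1)})$, and the same use of $\Gamma(1-z)g_{1-z}(z-1/3)=(2\pi)^{1/3-z}\Gamma(2/3)$ plus the reflection formula; the simplified common closed forms you arrive at for $D_z$ and for $C_{1/3}$ agree with \eqref{Cubic D_z} and \eqref{Cubic C_1/3} after writing $\tfrac{1}{1-3^{-1/3}}=\tfrac{3^{1/3}}{3^{1/3}-1}$. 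Only a minor phrasing slip: when passing from \eqref{Cubic C'_1/3} to \eqref{Cubic C_1/3} no further appeal to the reflection formula is needed (it was already absorbed in deriving \eqref{Cubic C'_1/3}), but this does not affect the argument.
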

\begin{proof}
The sum over $k$ and the sum over $a,b$ in each of \eqref{Cubic C'_1/3} and \eqref{Cubic D'_z} combine rather nicely to give
\[\sum_{k\geq 0}3^{-2k/3}\sum_{a,b=0}^2|\delta_{a,b}|^2=\left(\frac{1}{1-\frac{1}{\sqrt[3]{3^2}}}\right)\left(1+3^{1/3}\right)=\frac{3^{1/3}}{1-\frac{1}{\sqrt[3]{3}}}.\]
We also have
\begin{align*}
&\frac{1}{\zeta_{\mathbb{Q}(\omega)}(2)}\prod_{\pi\equiv 1(\mathrm{mod}\ 3)}\left(\frac{N(\pi)(N(\pi)^2+N(\pi)-1)}{(N(\pi)-1)(N(\pi)+1)^2} \right)\\
&=\frac{8}{9}\prod_{\pi\equiv 1(\mathrm{mod}\ 3)}\left(1-\frac{1}{N(\pi)^2}\right)\prod_{\pi\equiv 1(\mathrm{mod}\ 3)}\left(\frac{N(\pi)(N(\pi)^2+N(\pi)-1)}{(N(\pi)-1)(N(\pi)+1)^2} \right)\\
&=\frac{8}{9}\prod_{\pi\equiv 1(\mathrm{mod}\ 3)}\left(1-\frac{1}{N(\pi)(N(\pi)+1)}\right).
\end{align*}
Using these in \eqref{Cubic C'_1/3}, we arrive at
\begin{align*}
C_{1/3}'&=\left(\frac{1}{1-\frac{1}{\sqrt[3]{3}}}\right)\frac{\pi^2}{23^6\zeta_{\mathbb{Q}(\omega)}(2)}\prod_{\pi\equiv 1(\mathrm{mod}\ 3)}\left(1-\frac{1}{N(\pi)(N(\pi)+1)}\right),
\end{align*}
and thus $C_{1/3}=C_{1/3}'$. Using additionally the fact that
\[ g_{1-z}(z-1/3)=(2\pi)^{1/3-z}\frac{\Gamma(2/3)}{\Gamma(1-z)}\] in \eqref{Cubic D'_z}, together with the reflection formula
$\Gamma(2/3)\Gamma(1/3)=\frac{2\pi}{\sqrt{3}},$
we conclude that
\begin{equation*}
D'_z=\left(\frac{1}{1-\frac{1}{\sqrt[3]{3}}}\right)\frac{2^{z-\frac{4}{3}}\pi^{z+\frac{5}{3}}\Gamma(1/3)}{3^{\frac{z}{2}+\frac{35}{6}}\Gamma(z)\zeta_{\mathbb{Q}(\omega)}(2)}\frac{G(z-1/3)}{z-1/3}\prod_{\pi\equiv 1(\mathrm{mod}\ 3)}\left(1-\frac{1}{N(\pi)(N(\pi)+1)}\right)
\end{equation*}
and indeed get $D'_z=-D_z$ since $G$ is an even function from Proposition \ref{AFE}.
\end{proof}

Combining Theorems \ref{Constants} and \ref{M_2'} with Corollary \ref{M_2''}, the proof of Theorem \ref{Asymptotics} is now complete.

\section{The Quadratic Case Revisited}

Let $q\in\mathbb{Z}_{\geq 1}$ be odd, SF and $q\equiv 1\pmod{4}$. Let $\chi_q$ be the quadratic character given by the Kronecker symbol $\chi_q(a)=\left(\frac{a}{q}\right)_2$. Note that $\chi_q$ is primitive and has conductor $q$ in this case, and the quadratic $L$-function $L(z,\chi_q)$ satisfies the functional equation
\[\Lambda(z,\chi_q)=\left(\frac{q}{\pi}\right)^{z/2}\Gamma\left(\frac{z}{2}\right)L(z,\chi_q)=\Lambda(1-z,\chi_q). \]
Let $Q>2$ and consider the family \[\cF_2=\{\chi_q\;:\;q\in\mathbb{Z}_{>0},\; q\equiv 1\pmod 4,\; \text{and}\; \mu^2(q)=1 \}\]
 of all such characters having conductor $q\leq Q$.

As in the cubic case, the value of the $L$-function inside the critical strip is given by the approximate functional equation. 
\begin{proposition}\label{AFE0} Let $G(u)$ be any function which is holomorphic and bounded in the strip $-4<\Re(u)<4$, even, and normalized by $G(0)=1$. Let $X>0$, then for $z=\sigma+it$ in the strip $0\leq \sigma\leq 1$ we have
\[L(z,\chi_q)=\sum_{n\geq 1}\frac{\chi_q(n)}{n^z}V_z\left(\frac{n}{\sqrt{q}}\right)+\epsilon(z,\chi_q)\sum_{n\geq 1}\frac{\chi_q(n)}{n^{1-z}}V_{1-z}\left(\frac{n}{\sqrt{q}}\right), \]
where \[V_z(y)=\frac{1}{2\pi i}\int_{(3)}y^{-u}\frac{G(u)}{u}g_z(u)\,du \ \
\text{with} \ \ g_z(u)=\pi^{-u/2}\frac{\Gamma\left(\frac{z+u}{2}\right)}{\Gamma\left(\frac{z}{2}\right)}, \]
and \[\epsilon(z,\chi_q)=\left(\frac{q}{\pi}\right)^{1/2-z}\frac{\Gamma\left(\frac{1-z}{2}\right)}{\Gamma\left(\frac{z}{2}\right)}. \]
\end{proposition}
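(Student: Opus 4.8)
The statement is the specialization to $L(z,\chi_q)$ of the general approximate functional equation in its balanced form, e.g.\ \cite[Theorem 5.3]{IwK}; the plan is therefore to verify the hypotheses of that theorem and read off the conclusion, or, equivalently, to run the short contour-shift argument directly. The relevant input is that, since $q\equiv 1\pmod 4$, the character $\chi_q$ is even, so its gamma factor is $\gamma(z,\chi_q)=\pi^{-z/2}\Gamma(z/2)$; it is primitive of conductor $q$; and, being a nonprincipal real primitive character, it has root number $+1$. Hence the completed $L$-function $\Lambda(z,\chi_q)=(q/\pi)^{z/2}\Gamma(z/2)L(z,\chi_q)$ is entire in $z$ and satisfies $\Lambda(z,\chi_q)=\Lambda(1-z,\chi_q)$, as recorded just above the statement.

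First I would set
\[
I:=\frac{1}{2\pi i}\int_{(3)}\Lambda(z+u,\chi_q)\,\frac{G(u)}{u}\,du.
\]
On the line $\Re(u)=3$ one has $\Re(z+u)>1$, so $L(z+u,\chi_q)=\sum_{n\geq 1}\chi_q(n)n^{-z-u}$ converges absolutely. Interchanging the sum with the integral — legitimate because Stirling's bound for $\Gamma((z+u)/2)$ forces rapid decay on vertical lines and $G$ is bounded in $-4<\Re(u)<4$ — and pulling the factor $(q/\pi)^{z/2}\Gamma(z/2)$ out of each term, one recognizes the residual $u$-integral in the $n$-th term as exactly $V_z(n/\sqrt q)$ with $g_z(u)=\pi^{-u/2}\Gamma((z+u)/2)/\Gamma(z/2)$, precisely as in the statement; that is, $I=(q/\pi)^{z/2}\Gamma(z/2)\sum_{n\geq 1}\frac{\chi_q(n)}{n^{z}}V_z(n/\sqrt q)$.

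Next I would shift the contour from $\Re(u)=3$ to $\Re(u)=-3$. Since $\Lambda(z+u,\chi_q)$ is entire in $u$ and the integrand decays on vertical lines, the only pole crossed is the simple pole of $1/u$ at $u=0$, whose residue is $\Lambda(z,\chi_q)G(0)=\Lambda(z,\chi_q)$; thus $I=\Lambda(z,\chi_q)+\frac{1}{2\pi i}\int_{(-3)}\Lambda(z+u,\chi_q)\frac{G(u)}{u}\,du$. In the remaining integral I would substitute $u\mapsto -u$, use that $G$ is even, and apply the functional equation $\Lambda(z-u,\chi_q)=\Lambda(1-z+u,\chi_q)$ (together with $\overline{\chi_q}=\chi_q$); the same manipulation as in the previous step, now with $z$ replaced by $1-z$, identifies this integral with $-(q/\pi)^{(1-z)/2}\Gamma((1-z)/2)\sum_{n\geq 1}\frac{\chi_q(n)}{n^{1-z}}V_{1-z}(n/\sqrt q)$. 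Equating the two expressions for $I$, dividing through by $(q/\pi)^{z/2}\Gamma(z/2)$, and moving the dual sum to the right-hand side yields the asserted identity, since $\Lambda(z,\chi_q)/((q/\pi)^{z/2}\Gamma(z/2))=L(z,\chi_q)$ and $(q/\pi)^{1/2-z}\Gamma((1-z)/2)/\Gamma(z/2)=\epsilon(z,\chi_q)$.

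Since $\Lambda$ is entire and only classical gamma quotients intervene, there is no genuine obstacle here; the points needing care are purely technical: (i) the vertical-decay estimates that justify both the term-by-term integration and the contour shift, which follow from Stirling together with the holomorphy and boundedness of $G$ in the strip $-4<\Re(u)<4$; and (ii) bookkeeping the normalizing powers $(q/\pi)^{z/2}$, $\pi^{-u/2}$ and $q^{u/2}$ so that they recombine exactly into $g_z(u)\,(n/\sqrt q)^{-u}$ and into the stated $\epsilon(z,\chi_q)$. A free parameter $X>0$, if desired (as in Proposition~\ref{AFE}), is carried along by inserting $X^{u}$ into the kernel of $I$; this splits the two sums asymmetrically into $V_z(n/(X\sqrt q))$ and $V_{1-z}(Xn/\sqrt q)$ and changes nothing essential, with $X=1$ recovering the displayed balanced form.
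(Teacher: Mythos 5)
Your proposal is correct and follows exactly the standard contour-shift derivation that Proposition~\ref{AFE0} is implicitly citing (cf.\ \cite[Theorem 5.3]{IwK}, which the paper quotes for the analogous Proposition~\ref{AFE}); the verification that $\chi_q$ is even, primitive of conductor $q$ with root number $+1$ for $q\equiv1\pmod4$ squarefree, the term-by-term Mellin identification of $V_z$, the residue at $u=0$, and the substitution $u\mapsto-u$ with the functional equation are all exactly as needed, and your bookkeeping of $(q/\pi)^{u/2}n^{-u}=\pi^{-u/2}(n/\sqrt q)^{-u}$ and of $\epsilon(z,\chi_q)$ is correct. Your closing remark that the vestigial parameter $X$ in the statement corresponds to the balanced case $X=1$ is also apt.
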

We will also use the following bounds on the $V_z$ function and its derivatives. These can be again deduced from \cite[Proposition 5.4]{IwK}.

\begin{lemma}\label{Bounds on V_s0}
 For $\Re(z+1)\geq 3\alpha>0,$ the derivatives of $V_z(y)$ satisfy
 \[y^aV_z^{(a)}(y)\ll_{a,E}\left(1+\frac{y}{\sqrt{(|z|+3)}}\right)^{-E} \]
 for any $E>0$, and
 \[ y^aV_z^{(a)}(y)=\delta_a+O_{a,\alpha}\left(\left(\frac{y}{\sqrt{|z|+3}}\right)^{\alpha}\right)\]
 where $\delta_0=1$ and $\delta_a=0$ for $a\geq 1$.
 \end{lemma}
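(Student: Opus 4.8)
The plan is to follow the proof of Lemma~\ref{Bounds on V_s} essentially verbatim (equivalently \cite[Proposition~5.4]{IwK}). The only structural difference is that the quadratic gamma factor is the single $\pi^{-z/2}\Gamma(z/2)$ rather than the cubic pair $\Gamma(z/2)\Gamma((z+1)/2)$ (equivalently $\Gamma(z)$ after duplication), which is exactly why the analytic conductor entering the bounds is $\sqrt{|z|+3}$ in place of $\sqrt{(|s|+3)(|s+1|+3)}$. So I would start from $V_z(y)=\frac{1}{2\pi i}\int_{(3)}y^{-u}\frac{G(u)}{u}g_z(u)\,du$ with $g_z(u)=\pi^{-u/2}\Gamma((z+u)/2)/\Gamma(z/2)$, differentiate $a$ times under the integral sign, and multiply by $y^a$; since $\frac{d^a}{dy^a}y^{-u}=(-1)^aP_a(u)y^{-u-a}$ with $P_a(u)=u(u+1)\cdots(u+a-1)$, this produces the contour integral of $P_a(u)\,y^{-u}\,\frac{G(u)}{u}\,g_z(u)$, in which $P_a(u)/u$ is a polynomial for $a\ge 1$ (so there is no pole at $u=0$) while the simple pole at $u=0$ survives for $a=0$.

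For the first (rapid-decay) estimate I would shift the line of integration rightward to $\Re(u)=E$ (staying within the strip where $G$ is holomorphic, enlarging it if necessary), crossing no pole, and bound $g_z(u)$ there by Stirling's formula: one gets $g_z(u)\ll_{E}(|z|+|u|+3)^{E/2}e^{-c|\Im(u)|}$ for a fixed $c>0$, uniformly for $0\le\Re(z)\le 1$, so, using the boundedness of $G$ and the decay of $1/(|u|+1)$, the integral is $\ll_{a,E}(|z|+3)^{E/2}$ and hence $y^aV_z^{(a)}(y)\ll_{a,E}y^{-E}(|z|+3)^{E/2}$; combined with the trivial bound $\ll_{a}1$ obtained instead on $\Re(u)=\epsilon$, this yields $(1+y/\sqrt{|z|+3})^{-E}$. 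For the second estimate I would shift leftward to $\Re(u)=-\alpha$: for $a=0$ this crosses the simple pole at $u=0$ with residue $G(0)g_z(0)=1=\delta_0$, while for $a\ge 1$ there is no pole at $u=0$ and the residue is $0=\delta_a$, and the hypothesis $\Re(z+1)\ge 3\alpha$ keeps the new line to the right of the poles of $\Gamma((z+u)/2)$ so that no further residues appear; the Stirling bound on $\Re(u)=-\alpha$ is now $g_z(u)\ll_{\alpha}(|z|+3)^{-\alpha/2}e^{-c|\Im(u)|}$, giving a remaining integral $\ll_{a,\alpha}y^{\alpha}(|z|+3)^{-\alpha/2}=(y/\sqrt{|z|+3})^{\alpha}$, which is precisely the claimed error term.

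The only step I expect to require genuine care is the uniform Stirling estimate for the ratio $\Gamma((z+u)/2)/\Gamma(z/2)$ as $z$ ranges over the relevant part of the strip $0\le\Re(z)\le 1$ (in particular when $|\Im(z)|$ is large) and $u$ runs along a fixed vertical line; one handles the regimes $|u|\lesssim|z|$ and $|u|\gtrsim|z|$ separately and exploits the exponential decay of $\Gamma$ in vertical strips. Everything else is a formal repetition of the contour shifts already performed in the cubic case, so in the write-up I would simply invoke that argument, recording only the replacement of $\sqrt{(|s|+3)(|s+1|+3)}$ by $\sqrt{|z|+3}$ coming from the single gamma factor.
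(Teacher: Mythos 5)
Your proposal is correct and follows the same route the paper takes: the paper's entire justification for this lemma is a citation to \cite[Proposition 5.4]{IwK}, and what you have written out is precisely the standard Iwaniec--Kowalski contour-shift-and-Stirling argument specialized to the single gamma factor $\Gamma((z+u)/2)/\Gamma(z/2)$ (which is exactly why the analytic conductor $\sqrt{(|s|+3)(|s+1|+3)}$ of the cubic Lemma~\ref{Bounds on V_s} collapses to $\sqrt{|z|+3}$ here). The one place you yourself flag as delicate --- the uniform Stirling estimate for $g_z(u)$, which must be handled separately in the regimes $|\Im(u)|\lesssim|\Im(z)|$ and $|\Im(u)|\gtrsim|\Im(z)|$ rather than via the single displayed inequality --- is indeed where the care lies, but that is internal to the cited argument and not a deviation from the paper.
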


From Proposition \ref{AFE0}, the first moment over the family $\mathcal{F}_2$ equals
\[\sum_{\substack{q\geq 1\, SF\\q\equiv 1(\mathrm{mod}\ 4) }}L(z,\chi_q)\omega\left(q/Q \right)=\mathcal{M}_1(z)+\mathcal{M}_2(z), \]
where $\mathcal{M}_1$ is the principal sum given by
\[\mathcal{M}_1(z)=\sum_{n\geq 1}\frac{1}{n^z}\sum_{\substack{q\geq 1\\q\, SF\\ q\equiv 1(\mathrm{mod}\ 4)}}\chi_q(n)V_z\left(\frac{n}{\sqrt{q}}\right)\omega\left(q/Q \right) \]
and $\mathcal{M}_2$ is the dual sum given by
\[\mathcal{M}_2(z)=\pi^{z-1/2}\frac{\Gamma\left(\frac{1-z}{2}\right)}{\Gamma\left(\frac{z}{2}\right)}\sum_{n\geq 1}\frac{1}{n^{1-z}}\sum_{\substack{q\geq 1\\q\, SF\\ q\equiv 1(\mathrm{mod}\ 4)}}\frac{\chi_q(n)}{q^{z-1/2}}V_{1-z}\left(\frac{n}{\sqrt{q}}\right)\omega\left(q/Q \right). \]
Note that here we use a balanced approximate functional equation (unlike in the cubic case) since both sums are of the same size, and at $z=1/2$ the principal sum is in fact equal to the dual sum.

In the rest of this section, we prove the following asymptotics.

\begin{theorem}\label{Asymptotics0}
Let $\epsilon>0$. For any $z\in(0,1)-\{\frac{1}{2}\}$, we have 
\[\mathcal{M}_1(z)=C_{z}\widetilde{w}(1)Q+D_z \widetilde{w}(5/4-z/2)Q^{5/4-z/2}+O(Q^{1-\frac{z}{2}+\epsilon})\]
and 
\[\mathcal{M}_2(z)=-D_z \widetilde{w}(5/4-z/2)Q^{5/4-z/2}+C_z'\widetilde{w}(3/2-z)Q^{3/2-z}+O(Q^{1-\frac{z}{2}+\epsilon}),\]
for constants $C_z,C'_z$ and $D_z$ given explicitly in \eqref{Quad C_z}, \eqref{Quad C_z'}, and \eqref{Quad D_z} respectively.

At $z=0$ we have
\[\mathcal{M}_1(0)=D_0\widetilde{w}(5/4)Q^{5/4}+O(Q^{1+\epsilon})\]
and
\[\mathcal{M}_2(0)=-D_0\widetilde{w}(5/4)Q^{5/4}+C'_0\widetilde{w}(3/2)Q^{3/2}+O(Q^{1+\epsilon}).\]

At $z=1$ we have
\[\mathcal{M}_1(1)=C_{1}\widetilde{w}(1)Q+D_1 \widetilde{w}(3/4)Q^{3/4}+O(Q^{1/2+\epsilon}),\]
and
\[\mathcal{M}_2(1)=-D_1\widetilde{w}(3/4)Q^{3/4}+O(Q^{1/2+\epsilon}).\]

At $z=1/2$, we have 
\[\mathcal{M}_1(1/2)=\mathcal{M}_2(1/2)=C_{1/2}\widetilde{w}(1)Q\log Q+C_{2}Q+O(Q^{3/4+\epsilon}),\]
where $C_{1/2}$ and $C_2$ are given explicitly in \eqref{Quad C_1/2} and \eqref{Quad C_2} respectively.
\end{theorem}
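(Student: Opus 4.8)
The plan is to run the same two-stage scheme as in Sections 3 and 4 --- analyse the principal sum $\mathcal{M}_1(z)$ by reciprocity and the dual sum $\mathcal{M}_2(z)$ by a Dirichlet series in $q$ --- but now with one decisive simplification: the quadratic Gauss sum is trivial, since $g(q)=\sqrt q$ for $q$ squarefree with $q\equiv 1\pmod 4$. Hence the root number $\epsilon(z,\chi_q)$ of Proposition \ref{AFE0} carries no Gauss sum, and none of the metaplectic machinery of Section 4.1 is needed; the whole dual-sum analysis collapses to another Dirichlet series manipulation.

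\textbf{The principal sum.} First I would detect the squarefree condition on $q$ by $\sum_{\ell^2\mid q}\mu(\ell)$, and the congruence $q\equiv 1\pmod 4$ by orthogonality over $(\Z/4\Z)^\times$, which introduces a Dirichlet character $\psi$ modulo $4$ (the analogue of Lemma \ref{detect mod 9}). Quadratic reciprocity for the Kronecker symbol turns $\chi_q(n)$ into $\chi_n(q)$ up to an explicit $2$-adic factor that is absorbed into $\psi$, so after Mellin-inverting the product $V_z(n/\sqrt q)\,w(q/Q)$ in the variable $q$ and exchanging the sums, the inner sum over $q$ becomes $L(s,\chi_n\psi)$ times finitely many Euler factors at the primes dividing $2n\ell$. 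This $L$-function has a pole at $s=1$ exactly when $\psi$ is trivial and $n=m^2$ is a perfect square. Moving the $s$-contour to the left, the residue at $s=1$ contributes a term $\mathcal{M}_0(z)$ of size $Q$ which, after writing $n=m^2$ and carrying out the Mellin integral over $u$ coming from the integral representation of $V_z$, reduces up to constants to $\frac{1}{2\pi i}\int Q^{u/2}\frac{G(u)}{u}g_z(u)\widetilde{w}(1+u/2)\,Z(2z+2u)\,du$, where $Z(w)$ is a Dirichlet series (an Euler product convergent for $\Re(w)>0$) whose only pole in $\Re(w)>0$ is a simple pole at $w=1$ inherited from $\zeta$. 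Shifting this $u$-contour past $u=0$ and $u=1/2-z$ yields, respectively, the main term $C_z\widetilde{w}(1)Q$ and the secondary term $D_z\widetilde{w}(5/4-z/2)Q^{5/4-z/2}$, from which $C_z$ and $D_z$ are read off; for $0\le z<1/2$ these two poles occur in the opposite order, which is the switching behaviour, and at $z=0$ one only shifts to $\Re(u)=\epsilon$, losing the $Q$-term since poles of $Z$ may lie further left. The leftover $s$-line $\mathcal{M}_0'(z)$ is bounded as in Section 3.1: using the decay of the Mellin transform in $s$ and in $n/\sqrt Q$ to truncate, Cauchy--Schwarz in $n$, and the mean value $\sum_{n\le X}\lvert L(1/2+it,\chi_n\psi)\rvert^2\ll X^{1+\epsilon}(1+|t|)^{1+\epsilon}$ that follows from the quadratic large sieve (Lemma \ref{quadrativ large sieve})---which, unlike its cubic counterpart, has no $(MN)^{2/3}$ loss---one obtains the stated error $O(Q^{1-z/2+\epsilon})$ (with the adjusted exponents at $z=0,1$).

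\textbf{The dual sum and the cancellation.} Since $\epsilon(z,\chi_q)=(q/\pi)^{1/2-z}\Gamma((1-z)/2)/\Gamma(z/2)$ carries no arithmetic content, $\mathcal{M}_2(z)$ is treated by exactly the same steps --- detect the squarefree and mod $4$ conditions, flip reciprocity, Mellin-invert in $w$ --- and one reaches $L(s+z-1/2,\chi_n\psi)$ times finitely many Euler factors. The relevant pole now sits at $s=3/2-z$ (again only for $\psi$ trivial and $n$ a square) and produces the main term $C_z'\widetilde{w}(3/2-z)Q^{3/2-z}$ of $\mathcal{M}_2(z)$; shifting the residual $u$-integral past the $\zeta$-pole of its $Z(2-2z+2u)$-factor, located at $u=z-1/2$, produces a further term of size $Q^{5/4-z/2}$ which --- by a computation using the duplication and reflection formulas for $\Gamma$ and the evenness of $G$, exactly as in the proof of Theorem \ref{Constants} --- equals $-D_z\widetilde{w}(5/4-z/2)Q^{5/4-z/2}$. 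Hence in Theorem \ref{Quadratic} it cancels the secondary term of $\mathcal{M}_1(z)$, leaving $C_z'\widetilde{w}(3/2-z)Q^{3/2-z}$ as the surviving secondary term; the remaining contour is again $O(Q^{1-z/2+\epsilon})$.

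\textbf{The point $z=1/2$ and the main obstacle.} At $z=1/2$ the approximate functional equation is self-dual, so $\mathcal{M}_1(1/2)=\mathcal{M}_2(1/2)$; moreover $3/2-z=5/4-z/2=1$, so the two poles found above collide and the residual $u$-integral must be evaluated at a genuine double pole at $u=0$, whose Laurent expansion against $Q^{u/2}=1+(u/2)\log Q+\cdots$ yields $C_{1/2}\widetilde{w}(1)Q\log Q+C_2Q$. I expect the main obstacle to be precisely this last computation: extracting $C_2$ cleanly from the double-pole Laurent expansion of the product of the $\zeta$-factor, the $\Gamma$-ratio in $g_{1/2}$, $G$, and $\widetilde{w}(1+u/2)$ (the analogue of the lengthy constant \eqref{Cubic C_2}), while keeping consistent track of the finitely many ramified Euler factors at $2$ and at the primes dividing $n$ through every contour shift. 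By contrast the error-term work, although it relies on the quadratic large sieve, is routine, and the dual-sum analysis is in fact easier than its cubic counterpart because the Gauss sums are trivial.
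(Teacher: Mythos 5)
Your proposal follows essentially the same two-stage route as the paper's Section 5: sieve the squarefree condition, detect $q\equiv 1\pmod 4$ by orthogonality introducing a character $\psi\bmod 4$, flip by quadratic reciprocity, Mellin-invert to reach $L(s,\chi_n\psi)$ (resp.\ $L(s+z-1/2,\chi_n\psi)$ in the dual sum), extract the $n=\square$, $\psi=\psi_0$ pole, then move a second contour past the pole of the Euler product $Z$, and bound the remainder by Cauchy--Schwarz plus the quadratic large sieve; the cancellation $D_z'=-D_z$ via the $\Gamma$-identity $\frac{\pi^{z-1/2}\Gamma((1-z)/2)}{\Gamma(z/2)}g_{1-z}(z-1/2)=g_z(1/2-z)$ and the evenness of $G$ is also exactly what the paper does, as is the self-dual $z=1/2$ observation $\mathcal{M}_1(1/2)=\mathcal{M}_2(1/2)$. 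The only inessential deviations are a weaker (but still sufficient) second-moment bound in $t$ and a loose reference to ``duplication and reflection'' where the paper uses a direct $g_z$ computation.
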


As in the cubic case, we again here notice a cancellation between two of the terms in the asymptotic for $z\neq 1/2$, namely the $Q^{\frac{5}{4}-\frac{z}{2}}$ term. For $z> 1/2,$ the secondary term of $\mathcal{M}_1(z)$ cancels with the main term of $\mathcal{M}_2(z)$. And for $z<1/2,$ the main term of $\mathcal{M}_1(z)$ cancels with the secondary term of $\mathcal{M}_2(z)$. We mention that an explicit cancellation was also observed in the methods of \cite{Young} and \cite{Florea} for $z=1/2$. 

\subsection{The Principal Sum}

The computation here is similar to the cubic case. We first sieve over squarefree to get
 \begin{align*}
\mathcal{M}_1(z)&=\sum_{n\geq 1}\frac{1}{n^z}\sum_{\ell\geq 1}\mu(\ell)\left(\frac{n}{\ell^2}\right)_2\sum_{\substack{q\geq 1\\ q\equiv 1(\mathrm{mod}\ 4)}}\left(\frac{n}{q}\right)_2V_z\left(\frac{n}{\sqrt{\ell^2q}}\right)\omega\left(\frac{\ell^2q}{Q}\right)\\
&=\sum_{n\geq 1}\frac{1}{n^z}\sum_{\substack{\ell\geq 1\\(\ell,n)=1}}\mu(\ell)\mathcal{M}_1(\ell,n,z),
\end{align*}
where 
\[\mathcal{M}_1(\ell,n,z)=\sum_{\substack{q\geq 1\\ q\equiv 1(\mathrm{mod}\ 4)}}V_z\left(\frac{n}{\ell\sqrt{q}}\right)\omega\left(\frac{\ell^2q}{Q}\right).\]
Using Mellin inversion, we can write
\[V_{z}\left(\frac{n}{\ell\sqrt{q}}\right)\omega\left(\frac{\ell^2q}{Q}\right)=\frac{1}{2\pi i}\int_{(2)} \left(\frac{Q}{\ell^2q} \right)^s\widetilde{f}_z(s,n)\,ds\]
where $$\widetilde{f}_z(s,n)=\int_{0}^{\infty}V_{z}\left(\frac{n}{\sqrt{Qx}}\right)\omega(x)x^{s-1}\,dx$$
again satisfies
\begin{equation}\label{f_z}
\widetilde{f}_z(s,n)\ll(1+\lvert s\rvert)^{-E}\left(1+\frac{n}{\sqrt{Q}}\right)^{-E},
\end{equation}
for $\Re(s)\geq \frac{1}{4}$ and any $E>0$ and $0\leq z\leq 1$. With this notation, we have
\[ \mathcal{M}_1(\ell,n,z)=\frac{1}{2\pi i}\int_{(2)} \sum_{\substack{q\geq 1\\ q\equiv 1(\mathrm{mod}\ 4)}}\left(\frac{n}{q}\right)_2\left(\frac{Q}{\ell^2q} \right)^s\widetilde{f}_z(s,n)\,ds. \]
As $q\equiv 1\mod 4$, quadratic reciprocity gives $\chi_q(n)=\chi_n(q)$, and this yields  
\begin{align*}
\mathcal{M}_1(\ell,n,z)&=\frac{1}{2\pi i}\int_{(2)}\left(\frac{Q}{\ell^2}\right)^s\sum_{\substack{q\geq 1\\ q\equiv 1(\mathrm{mod}\ 4)}}\frac{\left(\frac{q}{n}\right)_2}{q^s}\widetilde{f}_z(s,n)\,ds\\
&=\frac{1}{2}\frac{1}{2\pi i}\int_{(2)}\left(\frac{Q}{\ell^2}\right)^s\sum_{\psi \mod 4}L(s,\chi_n\psi)\widetilde{f}_z(s,n)\,ds,
\end{align*}
where the last equality follows from the orthogonality relation
\[\sum_{\psi \mod 4}\psi(q)=\begin{cases}
2& \text{if} \ \ q\equiv 1\pmod 4 \\
0& \text{if} \ \ q\not\equiv 1\pmod 4,
\end{cases} 
\]
and this sum is over all Dirichlet characters mod $4$. Now each $L$-function $L(s,\psi\chi_n)$ has a pole at $s=1$ only when $n$ is a square and $\psi$ is trivial. So we estimate $\mathcal{M}_1(z)$ by moving the contour to the half line. We denote by $\mathcal{M}_0(z)$ the contribution of $\mathcal{M}_1(z)$ at these poles, and by $\mathcal{M}'_0(z)$ the contribution of the half line.

In order to bound the error term $\mathcal{M}_0'(z)$, we will need the following upper bound on the second moment of the quadratic $L$-functions at the point $1/2+it$.

\begin{theorem}\label{Bound on second moment} For any $X>1$ and $t\in \mathbb{R}$ we have the bound
\[\sum_{n\leq X}\lvert L(1/2+it,\chi_n)\rvert^2 \ll_{\epsilon} X^{1+\epsilon}\left(1+|t|\right)^{1/2+\epsilon}\]
for any $\epsilon>0$. 
\end{theorem}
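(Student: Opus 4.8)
The plan is to transcribe the proof of Theorem~\ref{Bound on 2nd moment} to the classical setting, with Heath-Brown's cubic large sieve replaced by the quadratic large sieve of Lemma~\ref{quadrativ large sieve}, which crucially carries no $(MN)^{2/3}$ term. After the usual preliminary reductions (pulling out the $2$-part of the modulus and fixing residue classes modulo $4$), one writes $n=n_1n_2^2$ with $n_1$ squarefree, so that $\chi_n=\chi_{n_1}\cdot\mathbbm{1}_{(\cdot,n_2)=1}$ and hence
\[
L(1/2+it,\chi_n)=L(1/2+it,\chi_{n_1})\prod_{p\mid n_2}\left(1-\frac{\chi_{n_1}(p)}{p^{1/2+it}}\right),
\]
the finite product being $O_\epsilon(n_2^\epsilon)$. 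It therefore suffices to bound $\psum_{n_1\le Y}|L(1/2+it,\chi_{n_1})|^2$ with $Y=X/n_2^2$ (the star being over squarefree $n_1$ in a fixed class modulo $4$) and then to sum the resulting estimate trivially over $n_2\le\sqrt X$.

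For the squarefree second moment I would invoke the balanced approximate functional equation for $L(1/2+it,\chi_{n_1})$ of the type in Proposition~\ref{AFE0}: since its root number has modulus $1$ and $\chi_{n_1}$ is real, $|L(1/2+it,\chi_{n_1})|^2$ is bounded by a constant times $\bigl|\sum_{m}\chi_{n_1}(m)m^{-1/2-it}V_{1/2+it}(m/\sqrt{n_1})\bigr|^2$. Using the rapid decay of $V$ in Lemma~\ref{Bounds on V_s0} I would truncate the $m$-sum at $M:=\bigl((1+|t|)Y\bigr)^{1/2+\epsilon}$, and then remove the smooth weight $V_{1/2+it}$ by partial summation exactly as in the cubic argument, again using the derivative bounds of the same lemma. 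This reduces the task to bounding $\psum_{n_1\le Y}\bigl|\sum_{m\le M}\chi_{n_1}(m)m^{-1/2-it}\bigr|^2$.

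The final step is to write $m=d^2r$ with $r$ squarefree, apply Cauchy--Schwarz in the $d$-variable, and invoke Lemma~\ref{quadrativ large sieve} for the inner sum over $n_1$ and $r$ (after flipping $\left(\tfrac{r}{n_1}\right)$ to $\left(\tfrac{n_1}{r}\right)$ by quadratic reciprocity, which is permissible up to a sign determined by $n_1$ and $r$ modulo $4$). Since $\psum_{r\le M/d^2}r^{-1}\ll\log M$, this gives
\[
\psum_{n_1\le Y}\left|\sum_{m\le M}\frac{\chi_{n_1}(m)}{m^{1/2+it}}\right|^2\ll(YM)^\epsilon\sum_{d\le\sqrt M}\frac1d\left(Y+\frac{M}{d^2}\right)\ll Y^{1+\epsilon}+M^{1+\epsilon},
\]
whence, with the above $M$, $\psum_{n_1\le Y}|L(1/2+it,\chi_{n_1})|^2\ll Y^{1+\epsilon}(1+|t|)^\epsilon+Y^{1/2+\epsilon}(1+|t|)^{1/2+\epsilon}$. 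Substituting $Y=X/n_2^2$ and summing over $n_2\le\sqrt X$ then yields $\sum_{n\le X}|L(1/2+it,\chi_n)|^2\ll X^{1+\epsilon}(1+|t|)^\epsilon+X^{1/2+\epsilon}(1+|t|)^{1/2+\epsilon}\ll X^{1+\epsilon}(1+|t|)^{1/2+\epsilon}$. The one genuinely delicate point is the bookkeeping of the $2$-adic and mod-$4$ conditions through the decomposition and the reciprocity flip; everything else is a routine (and somewhat easier) transcription of the cubic proof, the gain over that case coming precisely from the absence of the $(MN)^{2/3}$ term and from the triviality of the root number.
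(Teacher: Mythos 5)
Your proposal is correct and mirrors what the paper implicitly relies on: the paper itself does not write out a proof for this theorem, citing instead Soundararajan's Lemma~2.5, which is exactly a quadratic large-sieve argument of the kind you describe. What you have done is transcribe the paper's own cubic proof (Theorem~\ref{Bound on 2nd moment}) to the quadratic setting; this is essentially what Soundararajan's lemma also does. The sequence of reductions (peel off the square part $n_2^2$, which costs only $n_2^\epsilon$ per Euler factor; use the balanced AFE and exploit that the two halves are complex conjugates since $\chi_{n_1}$ is real; truncate at $M\asymp(Y(1+|t|))^{1/2+\epsilon}$; strip the weight $V$ by partial summation; split $m=d^2r$; Cauchy--Schwarz in $d$; apply the large sieve) is the right one and the exponent bookkeeping checks out, with the gain over the cubic setting coming from the absence of $(MN)^{2/3}$ in Lemma~\ref{quadrativ large sieve}.

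One small correction: the quadratic-reciprocity ``flip'' you invoke is not needed. Lemma~\ref{quadrativ large sieve} is already stated with the outer variable in the denominator of the symbol and the inner variable in the numerator, i.e.\ $\bigl(\tfrac{n}{m}\bigr)_2$ with $m$ outside; applying it with $m=n_1$ and $n=r$ gives precisely $\bigl(\tfrac{r}{n_1}\bigr)_2=\chi_{n_1}(r)$, which is what you have in hand, so you can invoke the sieve directly. (Flipping by reciprocity is harmless since the sign only depends on $n_1,r\!\!\pmod 4$ and one can split into fixed residue classes, but it introduces needless $2$-adic bookkeeping.) Beyond that, the only care required is, as you note, in handling even $n$ and the $2$-part of $m$ when passing to the large sieve, which restricts to odd squarefree inputs; this is standard but should be written out if one wants a fully self-contained proof.
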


This result may be found in the work of Soundarararajan \cite[Lemma 2.5]{Sound2000}, and is a consequence of the following quadratic large sieve proven by Heath-Brown \cite{HB1995}.

\begin{lemma}\label{quadrativ large sieve}
Let $M,N>0$ and $c_n$ be an arbitrary sequence of complex numbers where $n$ runs in $\mathbb{Z}_{\geq 1}$. Then
 \[\psum_{\substack{m\leq M}}\left\lvert\, \psum_{n\leq N}c_n\left(\frac{n}{m}\right)_2\right\rvert^2\ll_{\epsilon} \left(M+N\right)\left(MN \right)^{\epsilon}\psum_{n\leq N}\lvert c_n \rvert^2\]
for any $\epsilon>0$, where the star on the sum indicates that we are summing over positive odd squarefree values.
\end{lemma}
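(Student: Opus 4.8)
The plan is to follow Heath-Brown's argument \cite{HB1995}; I outline its structure, since it is exactly the self-similar feature that the cubic large sieve of Lemma \ref{cubic large sieve} lacks. First I would insert a smooth, rapidly decaying majorant $\Phi\ge\mathbbm{1}_{[0,1]}$ and pass from the starred (odd squarefree) sum over $m$ to the weighted sum $\sum_{(m,2)=1}\mu^2(m)\Phi(m/M)(\cdots)$, then expand the square to reduce matters to estimating
\[\psum_{n_1,n_2\le N}a_{n_1}\overline{a_{n_2}}\sum_{(m,2)=1}\mu^2(m)\Phi(m/M)\left(\frac{n_1n_2}{m}\right)_2,\]
and factor $n_1n_2=\ell v^2$ with $\ell$ squarefree, so that $\left(\frac{n_1n_2}{m}\right)_2=\left(\frac{\ell}{m}\right)_2\mathbbm{1}_{(m,v)=1}$. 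Since $n_1,n_2$ are odd and squarefree, the diagonal is exactly $n_1=n_2$ (equivalently $\ell=1$); it contributes $\ll M\sum_n|a_n|^2$ and supplies the $M$ in the bound. Everything then rests on the off-diagonal terms, where $\ell>1$.

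For the off-diagonal I would detect $\mu^2(m)$ via $\mu^2(m)=\sum_{b^2\mid m}\mu(b)$ and the coprimality by M\"obius, and, using that by quadratic reciprocity $\left(\frac{\ell}{\cdot}\right)_2$ is a real Dirichlet character of conductor dividing $8\ell$, apply Poisson summation to the resulting smooth character sum over $m$. This introduces the quadratic Gauss sum $G_\ell(k)=\sum_{a}\left(\frac{\ell}{a}\right)_2 e(ak/\mathrm{cond})$ twisted by $\widehat\Phi$, and the decisive input is its exact evaluation, $G_\ell(k)=\varepsilon_\ell\sqrt{\ell}\left(\frac{k}{\ell}\right)_2$ with $|\varepsilon_\ell|=1$, valid for squarefree $\ell>1$. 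Three of its features drive the argument: $G_\ell$ has size exactly $\sqrt{\ell}$ and nothing larger (this is why the quadratic large sieve carries no extra $(MN)^{2/3}$ term, in contrast to Lemma \ref{cubic large sieve}); the zero frequency $k=0$ drops out because $\left(\frac{\ell}{\cdot}\right)_2$ is nontrivial; and the surviving $k$-sum is \emph{again} a quadratic character sum in the dual variable, so the whole estimate is self-similar. Exploiting this together with the rapid decay of $\widehat\Phi$ --- which confines $k$ to a short interval, on which P\'olya--Vinogradov applies (trivially when the interval has length $O(1)$) --- I would obtain for the inner $m$-sum a bound $(MN)^\epsilon\big(M^{1/2}\ell^{-1/2}+\ell^{1/2}\big)$ per square divisor $b$. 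Summing over $b\le\sqrt{M}$ and then over $n_1\ne n_2$ with $\ell=n_1n_2/(n_1,n_2)^2$, grouping by $d=(n_1,n_2)$, and applying Cauchy--Schwarz --- so that the $\ell^{1/2}\le N$ term contributes $\ll N(MN)^\epsilon\sum_n|a_n|^2$ and the $M^{1/2}\ell^{-1/2}$ term contributes $\ll M(MN)^\epsilon\sum_n|a_n|^2$ --- then gives the off-diagonal bound $\ll (M+N)(MN)^\epsilon\sum_n|a_n|^2$. Removing the majorant $\Phi$ completes the proof.

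The hard part is precisely this off-diagonal estimate. A single application of Poisson together with the crude bound $|G_\ell(k)|\le\sqrt{\ell}$ loses a full power of $N$, so one must exploit simultaneously the exact \emph{size} of the quadratic Gauss sum and the residual \emph{cancellation} it leaves in the dual frequency variable; it is this self-similar structure --- which fails for cubic Gauss sums, whose Dirichlet series is governed by the metaplectic theory of Kubota and Patterson rather than by an elementary closed form --- that both makes the sharp shape $M+N$ attainable here and, by its absence, forces the extra $(MN)^{2/3}$ term in Lemma \ref{cubic large sieve}. The remaining work --- tracking the $\mu^2$ weight through the decomposition $m=b^2c$ and the nested coprimality conditions --- is routine and does not change the final shape of the estimate.
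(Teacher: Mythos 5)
The paper does not actually prove this lemma: it is quoted verbatim from Heath--Brown \cite{HB1995} (via \cite{Sound2000}), so there is no in-paper argument to compare against, and your sketch must stand on its own. As written it has a genuine gap precisely at the off-diagonal estimate, which you correctly identify as the hard part. After Poisson summation you propose to bound the inner $m$-sum, for each fixed squarefree $\ell=n_1n_2/(n_1,n_2)^2>1$, by $(MN)^{\epsilon}\bigl(M^{1/2}\ell^{-1/2}+\ell^{1/2}\bigr)$. That is exactly the P\'olya--Vinogradov/completion bound, i.e.\ the ``crude'' estimate you say must be avoided: bounding the short dual $k$-sum by its length (or by P\'olya--Vinogradov on an interval of length $O(1)$) discards the cancellation in the dual variable rather than exploiting it. Feeding this pointwise-in-$\ell$ bound back into the sum over $n_1\neq n_2$ cannot give $M+N$: with $d=(n_1,n_2)$ the $\ell^{1/2}=\sqrt{n_1n_2}/d$ term contributes $\sum_{d}d^{-1}\bigl(\sum_{d\mid n}|a_n|\sqrt{n}\bigr)^2$, which for $a_n\equiv 1$ is of order $N^{3}=N^{2}\sum_n|a_n|^2$ --- a full power of $N$ larger than the $N\sum_n|a_n|^2$ you claim, and no application of Cauchy--Schwarz over $n_1,n_2$ repairs this. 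The argument as written only yields $O\bigl((M+N^2)(MN)^{\epsilon}\sum_n|a_n|^2\bigr)$.

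The missing idea is that the ``self-similarity'' must be used as an actual recursion, not invoked after the dual sum has already been estimated for each fixed $\ell$. Heath--Brown introduces the extremal constant $\Sigma(M,N)$ in the inequality, uses quadratic reciprocity (for odd squarefree arguments, up to characters mod $8$) together with the duality principle to make $\Sigma(M,N)$ and $\Sigma(N,M)$ comparable, and then applies Poisson summation while \emph{retaining} the dual frequency $k$ as a new bilinear variable paired against $\ell$; the off-diagonal is thereby bounded in terms of $\Sigma$ at new parameter ranges (the dual variable running up to roughly $N^2/M$). Iterating this functional inequality a bounded number of times, at the cost of $\epsilon$'s in the exponents, is what produces the sharp $M+N$. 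Your concluding heuristic --- that the exact evaluation of quadratic Gauss sums enables this and its absence forces the $(MN)^{2/3}$ term in Lemma \ref{cubic large sieve} --- is reasonable commentary, but it does not substitute for the recursive step, which is absent from your outline.
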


Therefore we can now bound $\mathcal{M}_0'(z)$ as follows
\begin{align*}
\mathcal{M}_0'(z)&\ll Q^{1/2}\sum_{\substack{n\geq 1\\ n\leq Q^{1/2+\epsilon}}}\frac{1}{n^z}\sum_{\substack{\ell\geq 1\\ \ell\leq Q^{1/2+\epsilon}}}\frac{1}{\ell}\int_{-\infty}^{\infty}\sum_{\psi \mod 4}\lvert L(1/2+it,\chi_n\psi)\rvert \lvert \tilde{f}_z(1/2+it,n) \rvert \,dt\\
&\ll_{E,\epsilon} Q^{1/2+E/2+\epsilon}\sum_{\psi \mod 4}\int_{-\infty}^{\infty}\sum_{n\leq Q^{1/2+\epsilon}}\frac{1}{n^{z+E}}\lvert L(1/2+it,\chi_n\psi)\rvert (1+|1/2+it|)^{-E}  \,dt,
\end{align*}
for any $E>0$ where we used the bound on $\tilde{f}_z(1/2+it,n)$ in \eqref{f_z}. Now we use Cauchy-Schwarz to split the sum over $n$ and then use the bound on the second moment in Theorem \ref{Bound on second moment} to finally get that for any choice of $E>0$,
\[\mathcal{M}_0'(z)\ll_{\epsilon} Q^{1-z/2+\epsilon}. \]

 When $\psi=\psi_0$ is the trivial character and $n=\square$, we write $n^2$ in place of $n$ and get that the contribution of the poles is
 \[\mathcal{M}_0 (z)=Q\sum_{n\geq 1}\frac{1}{n^{2z}}\sum_{\substack{\ell\geq 1\\(\ell,n)=1}}\frac{\mu(\ell)}{\ell^2}\widetilde{f}_z(1,n^2)Res_{s=1}L(s,\psi_0\chi_{n^2}).\]
Similar to the cubic case, we write
\[L(s,\psi_0\chi_{n^2})=\zeta(s)\prod_{p|2n}\left(1-\frac{1}{p^s}\right),\]
and 
\[\sum_{\substack{\ell\geq 1\\(\ell,n)=1}}\frac{\mu(\ell)}{\ell^2}=\frac{1}{\zeta(2)}\prod_{p|2n}\left(1-\frac{1}{p^2}\right)^{-1}. \]
Introducing the integral form of $V_{z}$ from Proposition \ref{AFE0} in $\widetilde{f}_z$, and then using the Mellin convolution formula for $\omega(x)$, we get that
\[\widetilde{f}_z(1,n^2)=\int_{0}^{\infty}V_{z}\left(\frac{n^2}{\sqrt{Qx}}\right)\omega(x)\,dx=\frac{1}{2\pi i}\int_{(3)}\left(\frac{\sqrt{Q}}{n^2}\right)^s\widetilde{\omega}(s/2+1)\frac{G(s)}{s}g_{z}(s)\,ds. \] 
Finally denote by \[Z(s):=\sum_{n\geq 1}\frac{1}{n^s}\left(1+\frac{1}{p}\right)^{-1}. \]
Therefore we obtain, using all of the above, and the fact that $\lim_{s\to 1}(s-1)\zeta(s)=1$,
\begin{equation}\label{Z(s)0}
\mathcal{M}_0(z)=\frac{Q}{3\zeta(2)}\frac{1}{2\pi i}\int_{(3)}Q^{s/2}Z(2s+2z)\widetilde{\omega}(1+s/2)\frac{G(s)}{s}g_{z}(s)\,ds.\
\end{equation}

Note that $Z(s)$ is holomorphic and bounded for $\Re(s)\geq 1+\delta>1$. And similar to the cubic case, we can write
\[Z(s)=\zeta(s)\prod_{p}\left(1-\frac{1}{p^s(p+1)}\right),\]
which will again give
\begin{align*}
Z(s)&=\frac{\zeta(s)}{\zeta(s+1)}\prod_p\left(1+O\left(\frac{1}{p^{s+2}}\right)\right).
\end{align*}
The Euler product on the right hand side is analytic for $\Re(s)>-1$, and the quotient of zeta functions has poles at $s=1$ and in the region where $\Re(s)<0$. Therefore, $Z(2s+2z)$ has poles at $s=1/2-z$ and $\Re(s)<-z$. 

Thus, for $1/2<z\leq 1$, we can move the contour of integration in (\ref{Z(s)0}) to $s=-z$ to cross two simple poles at $s=0$ and $s=1/2-z< 0$ only. The new contour contributes $O(Q^{1-z/2})$, while the two poles give $C_z\widetilde{\omega}(1)Q$ at $s=0$, which is the main term in this case; and $D_z\widetilde{\omega}(5/4-z/2)Q^{5/4-z/2}$ at $s=1/2-z$, which is the secondary term. The two constants are given as
\begin{equation}\label{Quad C_z}
C_z=\frac{\zeta(2z)}{3\zeta(2)}\prod_{p}\left(1-\frac{1}{p^{2z}(p+1)}\right),
\end{equation}
and 
\begin{equation}\label{Quad D_z}
D_z=\frac{\pi^{z/2-1/4}\Gamma(1/4)}{6\Gamma(z/2)\zeta(2)}\frac{G(1/2-z)}{1/2-z}\prod_{p}\left(1-\frac{1}{p(p+1)}\right),
\end{equation}
where we have used $$g_{z}(1/2-z)=\pi^{z/2-1/4}\frac{\Gamma(1/4)}{\Gamma(z/2)}.$$

For $0<z<1/2$, the same analysis follows but now the simple pole at $s=1/2-z$ is positive. Hence, we get the same terms with the same constants in the asymptotic but with the main and secondary terms switched. 

At $z=0$, we only move the line of integration to $s=\epsilon$ since there might be poles for $\Re(s)<0$. This gives an error term of $O(Q^{1+\epsilon})$ and a main term of $D_0\widetilde{\omega}(5/4)Q^{5/4}$ which comes from the pole at $s=1/2$. If we in addition assume GRH, we can now move the line to $s=-1/4+\epsilon$ to get the better error term $O(Q^{7/8 +\epsilon})$ and capture the second pole at $s=0$ which will contribute the secondary term $C_0\widetilde{\omega}(1)Q$.

However, at $z=1/2$, when moving the contour of integration in (\ref{Z(s)}) to $s=-1/2$, we now cross a double pole at $s=0$ only. The new contour contributes $O(Q^{3/4})$, while the double pole at $s=0$ now gives
\[C_{1/2}\widetilde{\omega}(1)Q\log Q+C_2Q,\]
where the constants are given by
\begin{equation}\label{Quad C_1/2}
C_{1/2}=\frac{1}{12\zeta(2)}\prod_{p}\left(1-\frac{1}{p(p+1)}\right),\\
\end{equation}
and 
\begin{align}\label{Quad C_2}
C_2&=\Bigg[\frac{\widetilde{\omega}'(1)}{12\zeta(2)}+\frac{\widetilde{\omega}(1)}{6\zeta(2)}G'(0)+\frac{\widetilde{\omega}(1)}{12\zeta(2)}\left(\frac{\Gamma'(1/4)}{\Gamma(1/4)}-\log(\pi)\right)\nonumber\\
&+\frac{\widetilde{\omega}(1)}{3\zeta(2)}\left(\gamma+\sum_{p}\frac{\log p}{p(p+1)-1}\right)\Bigg]\prod_p\left(1-\frac{1}{p(p+1)}\right).
\end{align}

\subsection{The Dual Sum}
In the case of quadratic characters, the computation of the dual sum is similar to that of the principal sum. Thus, proceeding similarly as we did in the principal sum case, we arrive at

\begin{align*}
\mathcal{M}_2(z)&=\frac{\pi^{z-1/2}\Gamma\left(\frac{1-z}{2}\right)}{\Gamma\left(\frac{z}{2}\right)}\sum_{n\geq 1}\frac{1}{n^{1-z}}\sum_{\substack{\ell\geq 1\\(\ell,n)=1}}\frac{\mu(\ell)}{\ell^{2z-1}}\\
& \times \frac{1}{2\pi i}\int_{(2)}\frac{Q^s}{\ell^{2s}}\sum_{\psi \mod 4}L(s+z-1/2,\psi\chi_n)\widetilde{f}_z(s,n)\,ds.
\end{align*}
The difference this time is that the pole and the contour will depend on the value of $z$. The $L$-function here has a pole at $s=3/2-z$ only when $n$ is a square and $\psi=\psi_0$ is the principal character. Therefore, for any $0\leq z\leq 1$, we move the line of integration to the line $s=1-z\geq 0$. Denote by $\mathcal{M}_2'(z)$ the contribution of the poles at $s=3/2-z$ and by $\mathcal{M}_2''(z)$ the contribution of the line at $s=1-z$. 

Similarly as we did above in bounding the error term, we have 
\[\mathcal{M}_2''(z)\ll Q^{1-z+E/2+\epsilon}\int_{-\infty}^{\infty}\sum_{\psi \mod 4}\sum_{n\leq Q^{1/2+\epsilon}}\frac{1}{n^{1-z+E}}\lvert L(1/2+it,\chi_n\psi)\rvert (1+|1-z+it|)^{-E}\,dt, \]
and splitting the sum over $n$ using Cauchy-Schwarz and then using the bound in Theorem \ref{Bound on second moment}, we get
\[\mathcal{M}_2''(z)\ll_{\epsilon}Q^{1-z/2+\epsilon}. \]
The poles when $n=\square$ and $\psi$ is principal give
\[\mathcal{M}_2(z)=Q^{3/2-z}\frac{\pi^{z-1/2}\Gamma\left(\frac{1-z}{2}\right)}{\Gamma\left(\frac{z}{2}\right)}\sum_{n\geq 1}\frac{1}{n^{2-2z}}\sum_{\substack{\ell\geq 1\\(\ell,n)=1}}\frac{\mu(\ell)}{\ell^{2}}\widetilde{f}_z(3/2-z,n^2)Res_{s=1}L(s,\psi_0\chi_{n^2}). \]
And so, we get as before,
\[\mathcal{M}_2(z)=Q^{3/2-z}\frac{\pi^{z-1/2}\Gamma\left(\frac{1-z}{2}\right)}{3\zeta(2)\Gamma\left(\frac{z}{2}\right)}\int_{(3)}Q^{s/2}Z(2s-2z+2)\frac{G(s)}{s}g_{1-z}(s)\widetilde{\omega}(s/2+3/2-z)\,ds. \]
Now, $Z(2s-2z+2)$ has poles at $s=z-1/2$ and for $\Re(s)<z-1$. Thus, for $1/2<z< 1$, we can move the contour of integration in (\ref{Z(s)}) to $s=z-1$ to cross two simple poles at $s=0$ and $s=z-1/2>0$ only. The new contour contributes $O(Q^{1-z/2})$, while the two poles give $D_z'\widetilde{\omega}(5/4-z/2)Q^{5/4-z/2}$ at $s=z-1/2$, which is the main term in this case; and $C_z'\widetilde{\omega}(3/2-z)Q^{3/2-z}$ at $s=0$, which is the secondary term here.  

We have that
\begin{equation}\label{Quad C_z'}
C_z'=\frac{\pi^{z-1/2}\Gamma(\frac{1-z}{2})}{3\zeta(2)\Gamma(z/2)}\zeta(2-2z)\prod_p\left(1-\frac{p^{2z}}{p^3+p^2}\right),
\end{equation}
 and using the fact that $G$ is an even function together with the symmetry relation
\[\frac{\pi^{z-1/2}\Gamma\left(\frac{1-z}{2}\right)}{\Gamma\left(\frac{z}{2}\right)}g_{1-z}(z-1/2)=g_{z}(1/2-z),\]
we get that $D_z=-D_z'$ (and so the $Q^{5/4-z/2}$ term cancels with the one from the principal sum).

For $0\leq z<1/2$, the same analysis follows but now the simple pole at $s=z-1/2$ is negative. Hence, we get the same terms with the same constants in the asymptotic but with the main term and secondary term switched in this case.

At $z=1$, we only move the line of integration to $s=\epsilon$ since there might be poles for $\Re(s)<0$. This gives an error term of $O(Q^{1/2+\epsilon})$ and a main term of $D_1'\widetilde{\omega}(3/4)Q^{3/4}$. If we in addition assume GRH, we can now move the line to $s=-1/4+\epsilon$ to get the better error term $O(Q^{3/8 +\epsilon})$ and capture the second pole at $s=0$ which will contribute the secondary term $C_1'\widetilde{\omega}(1/2)Q^{1/2}$.

 For $z=1/2$ recall that $\mathcal{M}_2(1/2)=\mathcal{M}_1(1/2)$. Therefore, we have finally proved Theorem \ref{Asymptotics0}. And Theorem \ref{Quadratic} will now follow by setting 
 \begin{equation}\label{Quad C}
   C:=2C_{1/2}
   \end{equation}
    where $C_{1/2}$ is given in \eqref{Quad C_1/2}, and 
\begin{equation}\label{Quad D}
 D:=2C_2
 \end{equation}
 where $C_2$ is given in \eqref{Quad C_2}.
 \begin{remark}\label{q= 3mod 4}
If we were working instead with the family where $q\equiv 3\pmod 4$, the only differences would be when we used quadratic reciprocity and the orthogonality relation. In this case we would get a factor of $-1$ from reciprocity, and the character $\psi'(q)=\psi(3q)$ from orthogonality. At the end this results in the same asymptotics but with the new constants equal to $-1$ times the previous ones.
 \end{remark}

\printbibliography

\end{document}